%% file: sn-article.tex
\documentclass[pdflatex,sn-mathphys-num]{sn-jnl}

\usepackage{pgf}%
\usepackage{subcaption}%
\usepackage{graphicx}%
\usepackage{bm}%
\usepackage{multirow}%
\usepackage{amsmath,amssymb,amsfonts}%
\usepackage{amsthm}%
\usepackage{mathrsfs}%
\usepackage[title]{appendix}%
\usepackage{xcolor}%
\usepackage{textcomp}%
\usepackage{manyfoot}%
\usepackage{booktabs}%
\usepackage{algorithm}%
\usepackage{algorithmicx}%
\usepackage{algpseudocode}%
\usepackage{listings}%
\usepackage{cleveref}%

\newcommand{\mtx}[1]{\bm{#1}}%
\newcommand{\vct}[1]{\bm{#1}}%
\DeclareMathOperator{\Trace}{Tr}%

\theoremstyle{thmstyleone}%
\newtheorem{theorem}{Theorem}%
\newtheorem{lemma}[theorem]{Lemma}%
\crefname{equation}{}{}

\begin{document}

\title[Kernel-based linear system identification using augmented Krylov subspaces]{Kernel-based linear system identification using augmented Krylov subspaces}

\author*[1]{\fnm{Fabio} \sur{Matti}}\email{fabio.matti@epfl.ch}

\author[2]{\fnm{Martin Skovgaard} \sur{Andersen}}\email{mskan@dtu.dk}

\author[3]{\fnm{Tianshi} \sur{Chen}}\email{tschen@cuhk.edu.cn}

\author[1]{\fnm{Daniel} \sur{Kressner}}\email{daniel.kressner@epfl.ch}

\affil[1]{\orgdiv{Institute of Mathematics}, \orgname{EPFL}, \orgaddress{\city{Lausanne}, \postcode{1015}, \country{Switzerland}}}

\affil[2]{\orgdiv{Department of Applied Mathematics and Computer Science}, \orgname{Technical University of Denmark}, \orgaddress{\city{Kongens Lyngby}, \postcode{2800}, \country{Denmark}}}

\affil[3]{\orgdiv{School of Data Science and Shenzhen
Research Institute of Big Data}, \orgname{The Chinese University of Hong Kong}, \orgaddress{\city{Shenzhen}, \postcode{518172}, \country{China}}}

\abstract{We propose a novel Krylov subspace method for estimating the finite impulse response (FIR) of a one-dimensional linear time-invariant systems. The method approximates the system's FIR using a kernel-based formulation combined with hyperparameter selection based on maximum likelihood estimation (MLE), which requires repeated evaluation of two terms:
The data fit $\vct{y}^{\top} (\lambda \mtx{I} + \mtx{A})^{-1} \vct{y}$ and the model complexity $\log(\det (\lambda \mtx{I} + \mtx{A}))$, where $\mtx{A}$ is a certain positive semidefinite matrix that admits fast matrix--vector products and $\lambda > 0$ is a regularization parameter.
Instead of approximating these two quantities separately,
we jointly approximate them using a single augmented Krylov subspace for $\mtx{A}$. One major benefit of augmentation is that we obtain accelerated convergence when approximating the data fit quadratic form, through implicit preconditioning. Thanks to the shift invariance of Krylov subspaces, the extracted approximations can be used to evaluate the MLE objective for many values of $\lambda$ at little additional cost. We derive error bounds for the approximations, reflecting the benefits of augmentation demonstrated through multiple numerical experiments.}

\keywords{trace estimation, quadratic form approximation, Krylov subspace method, system identification, optimization}

\pacs[MSC Classification]{62C10, 65F08, 65F22, 65F40, 93B15, 93B30}

\maketitle

\section{Introduction}

This paper is concerned with reconstructing signals from noisy data. In particular, we consider single-input-single-output, causal, and linear time-invariant systems as they appear in the context of audio processing~\cite{naylor-2010-speech-dereverberation}, aircraft design~\cite{ljung-1998-system-identification}, and industrial process control~\cite{andersson-1995-estimation-residence,ljung-1998-system-identification}. Those assume that the output $\vct{y} \in \mathbb{R}^{m}$ is related to the input $\vct{u} \in \mathbb{R}^{m}$ through a relation of the form
\begin{equation}
    \vct{y} = \mtx{\Phi} \vct{\theta} + \vct{e}.
    \label{equ:linear-model}
\end{equation}
Here, $\mtx{\Phi} \in \mathbb{R}^{m \times n}$ is a Toeplitz matrix with entries $\Phi_{ij} = u_{i-j}$ if $i > j$ and $\Phi_{ij} = 0$ otherwise, $\vct{\theta} \in \mathbb{R}^{n}$ is the unknown impulse response, and $\vct{e} \in \mathbb{R}^{m}$ represents additive noise, independent of the input. We assume that $m \geq n$.

Following~\cite{chen-2012-estimation-transfer,pillonetto-2010-new-kernelbased}, we use a Bayesian approach for reconstructing the impulse response $\vct{\theta}$, given an input-output pair $(\vct{u}, \vct{y})$. To this extent, we assume independent Gaussian priors
\begin{equation}
    \vct{\theta} \sim \mathcal{N}(\vct{0}, \nu \mtx{K}(\beta)) \quad \text{and} \quad \vct{e} \sim \mathcal{N}(\vct{0}, \sigma^2 \mtx{I})
    \label{equ:bayesian-model}
\end{equation}
with $\nu, \sigma > 0$, and a kernel matrix $\mtx{K}(\beta) \in \mathbb{R}^{n \times n}$ constructed from kernels characterized by one or a few parameters $\beta$. Throughout this work, $\mtx{I}$ denotes an identity matrix of appropriate size. The goal is to find the set of parameters $\nu$, $\beta$, and $\sigma^2$ which best fit the output. Commonly, this is done through Expectation-Maximization~\cite{suzuki-2025-direct-bayesian}, maximum a posteriori estimation (MAP)~\cite{chung-2025-efficient-hyperparameter,chung-2017-generalized-hybrid}, and maximum likelihood estimation, also referred to as empirical Bayes~\cite{chen-2025-fast-kernelbased, chen-2023-scalable-kernelbased,chen-2012-estimation-transfer}.
We will use the last approach: Under the model~\cref{equ:bayesian-model}, the output $\vct{y}$ behaves as a centered multivariate Gaussian with covariance matrix $\sigma^2 \mtx{I} + \nu \mtx{A}(\beta)$, where
\begin{equation}
  \mtx{A}(\beta) := \mtx{\Phi} \mtx{K}(\beta) \mtx{\Phi}^{\top} \in \mathbb{R}^{m \times m}.
  \label{equ:matrix-definition}
\end{equation}
To find the optimal values of the parameters $\nu$, $\beta$, and $\sigma^2$, we minimize the corresponding negative log-likelihood
\begin{equation}
  \frac{1}{2} \left( \vct{y}^{\top} \left(\sigma^2 \mtx{I} + \nu \mtx{A}(\beta)\right)^{-1} \vct{y} + \log\left(\det(\sigma^2 \mtx{I} + \nu \mtx{A}(\beta))\right) + m \log(2\pi) \right).
  \label{equ:pml-original}
\end{equation}
Following~\cite[Section II.B]{chen-2023-scalable-kernelbased}, we reparametrize $\sigma^2$ to $\lambda = \sigma^2 / \nu > 0$. As a consequence, the negative log-likelihood~\cref{equ:pml-original} becomes convex with respect to $\nu^{-1}$ and it admits the unique minimizer $\nu^{\ast} = \vct{y}^{\top} (\lambda \mtx{I} + \mtx{A}(\beta))^{-1} \vct{y} / m$. Thus, inserting $\nu^{\ast}$ into~\cref{equ:pml-original} and dropping constant terms, we get the equivalent profile marginal log-likelihood (PML) criterion
\begin{equation}
  \psi_{\text{PML}}(\lambda, \beta) =  \log\left( \vct{y}^{\top} (\lambda \mtx{I} + \mtx{A}(\beta))^{-1} \vct{y} \right) + \frac{1}{m} \Trace\left(\log(\lambda \mtx{I} + \mtx{A}(\beta))\right),
  \label{equ:pml-criterion}
\end{equation}
where the matrix identity $\log(\det) = \Trace(\log)$ is used. To find the minimum of the PML criterion~\cref{equ:pml-criterion}, we use Bayesian optimization. Bayesian optimizers are gradient-free optimizers which are well suited for approximating a black-box objective function that is expensive to evaluate and possibly non-convex and stochastic. To find a minimum, these optimizers often need to evaluate the objective function for dozens of parameter values $(\lambda, \beta)$, each involving the computation of a quadratic form with an inverted matrix and the trace of the logarithm of a matrix---two very costly operations, which may take seconds to compute even for moderately sized problems ($m = \mathcal{O}(10^4)$, $n = \mathcal{O}(10^3)$). Once the optimal parameters $\nu^{\ast}$, $\beta^{\ast}$, and $(\sigma^{\ast})^2$ have been found, the estimated impulse response $\vct{\theta}^{\ast}$ is taken to be the posterior mean given $\vct{y}$ and these optimal parameters~\cite[II.A]{chen-2025-fast-kernelbased}.

For the kernel matrix involved in the Gaussian prior~\cref{equ:bayesian-model} of $\vct{\theta}$,
the authors of~\cite{chen-2025-fast-kernelbased} consider semiseparable matrices in factorized form $\mtx{K}(\beta) = \mtx{L}(\beta) \mtx{L}(\beta)^{\top}$, where both
the Cholesky factor $\mtx{L}(\beta)$ and its transpose can be applied to a vector in $\mathcal{O}(n)$ operations. Examples of such kernels are the stable spline kernel~\cite{pillonetto-2010-new-kernelbased} and the TC and DC kernels~\cite{chen-2012-estimation-transfer}. The \emph{direct algorithm} proposed in~\cite[Section III.A]{chen-2025-fast-kernelbased} for computing the PML criterion~\cref{equ:pml-criterion} first computes a factorization of $\mtx{A}(\beta)$ defined in~\cref{equ:matrix-definition} for one value of $\beta$, exploiting its structure. After this preprocessing step, the evaluation of the PML criterion~\cref{equ:pml-criterion} becomes very cheap, even for many values of $\lambda$. This is
clearly beneficial for optimizing~\cref{equ:pml-criterion}. However, the SVD needed in the preprocessing step is costly and limits the problem size.
Therefore,~\cite[Section III.B]{chen-2025-fast-kernelbased} also propose a so-called \emph{indirect algorithm}, which approximates the PML criterion~\cref{equ:pml-criterion} using an iterative least-squares solver combined with the Girard--Hutchinson trace estimator, which both can in principle be applied to larger-sized problems. However, unlike the direct algorithm, this approach cannot effectively reuse computations when evaluating~\cref{equ:pml-criterion} for many values of $\lambda$ at a fixed $\beta$.

In this work, we propose a novel method that combines the advantages of the direct and indirect algorithms, as detailed in~\Cref{sec:exist}. It is as scalable as the indirect algorithm while still offering similar benefits as the direct algorithm in the context of optimization, allowing for the cheap repeated evaluations for multiple values of $\lambda$.
Key components of our approach are the shift-invariance and nestedness of certain Krylov subspaces. In~\Cref{sec:kryl}, we show how these properties are leveraged to produce fast and accurate evaluations of the PML criterion~\cref{equ:pml-criterion}. In~\Cref{sec:the}, we give theoretical guarantees for our method and compare its performance with the two existing methods in~\Cref{sec:num} with multiple numerical experiments.

\paragraph{Other related work} More general Bayesian inverse problems of the form~\cref{equ:linear-model} are well studied in the literature; see, e.g.,~\cite{dashti-2017-bayesian-approach}. For example, the work~\cite{chung-2017-generalized-hybrid} uses an iterative procedure to compute the MAP estimate of $\vct{\theta}$ associated with problem~\cref{equ:linear-model}.
In contrast to this work,~\cite{chung-2017-generalized-hybrid} considers a more general Gaussian prior for the noise $\vct{e}$ and a fixed covariance kernel matrix $\mtx{K}(\beta) \equiv \mtx{K}$ for $\vct\theta$ that is assumed to not be easily factorizable, e.g., arising from a Matérn kernel with vector-valued inputs or a dictionary collection. Moreover, it selects the regularization parameter $\lambda$ on-the-fly during the iterative procedure. More recent work~\cite{chung-2025-efficient-hyperparameter} suggests to determine the optimal parameters $\beta$ and $\nu$ in~\cref{equ:bayesian-model} as the MAP estimates of the marginalized posterior distribution of these parameters. In this context, the log-determinant term is approximated with a preconditioned variant of stochastic Lanczos quadrature~\cite[Section 2.2]{ubaru-2017-fast-estimation}. Common to Monte Carlo type methods, this estimator is potentially inhibited by slow convergence.

\paragraph{Reproducibility} The code for this paper can be found in the repository \url{https://github.com/FMatti/krylov-augmented}, which extends the \texttt{gprfire} MATLAB package described in~\cite{chen-2025-fast-kernelbased}.

\section{Existing approaches for evaluating PML criterion~\cref{equ:pml-criterion}}
\label{sec:exist}

To simplify notation, we suppress the dependence of $\mtx K$ and related matrices on the kernel parameter $\beta$ from now on.

In this section, we recall the direct and indirect algorithms from~\cite{chen-2025-fast-kernelbased}. A key requirement of these algorithms is the availability of a
factorization $\mtx{K} = \mtx{L} \mtx{L}^{\top}$
for which the Cholesky factor $\mtx{L}$ is highly structured (and implicitly represented), such that the application of $\mtx{L}$ and $\mtx{L}^{\top}$ to a vector can be carried out in $\mathcal{O}(n)$ operations. Examples for kernels admitting such a representation are stable spline kernels as well as TC and DC kernels~\cite{andersen-2020-smoothing-splines}.

\subsection{Direct algorithm}
\label{subsec:direct}

To evaluate the PML criterion~\cref{equ:pml-criterion}, the direct algorithm from~\cite[Section III.A]{chen-2025-fast-kernelbased}
first computes for $\tilde{\mtx{\Phi}} = \mtx{\Phi} \mtx{L} \in \mathbb R^{m\times n}$ an economy-size SVD $\tilde{\mtx{\Phi}} = \mtx{U} \mtx{S} \mtx{V}^{\top}$, where the diagonal matrix $\mtx{S} = \operatorname{diag}(s_1, \dots, s_n)$ contains the singular values $s_1 \ge  \dots \ge s_n \ge 0$.
Using that $\mtx{A} = \mtx{\Phi} \mtx{K} \mtx{\Phi}^\top = \tilde{\mtx{\Phi}} \tilde{\mtx{\Phi}}^{\top} = \mtx{U} S^2 \mtx{U}^\top$, one obtains the expressions
\begin{equation}
  \vct{y}^{\top} (\lambda \mtx{I} + \mtx{A})^{-1} \vct{y} = \frac{\lVert \vct{y} \rVert _2^2 - \lVert \tilde{\vct{y}} \rVert _2^2}{\lambda} + \sum_{i=1}^{n} \frac{\tilde{y}_i^2}{s_i^2 + \lambda},
  \label{equ:inv-quadratic-direct}
\end{equation}
where $\tilde{\vct{y}} = \mtx{U}^{\top} \vct{y}$, and
\begin{equation}
  \Trace(\log(\lambda \mtx{I} + \mtx{A})) = (m - n) \log(\lambda) + \sum_{i=1}^{n} \log(s_i^2 + \lambda).
  \label{equ:log-trace-direct}
\end{equation}
These expressions make the evaluation of the PML criterion~\cref{equ:pml-criterion} inexpensive for different values of the regularization parameter $\lambda$, requiring only $\mathcal{O}(n)$ operations per value. However, the economy-size SVD for $\tilde{\mtx{\Phi}}$ needed in the preprocessing step requires $\mathcal{O}(mn^2)$ operations and $\mathcal{O}(mn)$ memory, which significantly limits the computationally feasible problem size.

\subsection{Indirect algorithm}
\label{subsec:indirect}

The indirect algorithm for approximating the PML criterion~\cref{equ:pml-criterion} proceeds iteratively, indirectly accessing $\mtx{L}$ and $\mtx{\Phi}$ through matrix--vector products with these matrices and their transposes. This allows one to conveniently benefit from the structure of $\mtx{L}$ mentioned above as well as the Toeplitz structure of $\mtx{\Phi}$, reducing the complexity of matrix--vector products with $\mtx{\Phi}$ to $\mathcal{O}((n + m) \log(n + m))$ via the fast Fourier transform (FFT).

The indirect algorithm employs the Sherman--Morrison--Woodbury formula to rewrite
\begin{equation*}
  \vct{y}^{\top} (\lambda \mtx{I} + \mtx{A})^{-1} \vct{y} = \lambda^{-1} \vct{y}^{\top}(\mtx{I} - \tilde{\mtx{\Phi}} (\lambda \mtx{I} + \tilde{\mtx{\Phi}}^{\top} \tilde{\mtx{\Phi}})^{-1} \tilde{\mtx{\Phi}}^{\top}) \vct{y},
\end{equation*}
where we recall that $\tilde{\mtx{\Phi}} = \mtx{\Phi} \mtx{L}$. The expression on the right essentially comes down to
the regularized least-squares problem $(\lambda \mtx{I} + \tilde{\mtx{\Phi}}^{\top} \tilde{\mtx{\Phi}}) \vct{x} =  \tilde{\mtx{\Phi}}^{\top} \vct{y}$, which is solved
by the LSQR algorithm~\cite{paige-1982-lsqr-algorithm} accelerated by a randomized Nyström preconditioner $\mtx{P}$; see, e.g.,~\cite{frangella-2023-randomized-nystrom}.
To address the trace term in~\cref{equ:pml-criterion}, the indirect algorithm uses the Weinstein--Aronszajn identity and repurposes the Nyström preconditioner $\mtx{P}$ to write
\begin{align*}
  \Trace(\log(\lambda \mtx{I} + \mtx{A}))
  &= \Trace(\log(\lambda \mtx{I} + \tilde{\mtx{\Phi}}^{\top} \tilde{\mtx{\Phi}})) + (m - n) \log(\lambda) \\
  &= \Trace(\log(\mtx{P}^{-1/2} (\lambda \mtx{I} + \tilde{\mtx{\Phi}}^{\top} \tilde{\mtx{\Phi}}) \mtx{P}^{-1/2})) + \Trace(\log(\mtx{P})) + (m - n) \log(\lambda).
\end{align*}
While $\Trace(\log(\mtx{P}))$ can be computed in $\mathcal{O}(n)$ operations, the first term is approximated using an adaptively truncated Mercator series expansion $\log(1 - x) = - \sum_{k=1}^{\infty} x^k / k!$ and the Girard--Hutchinson trace estimator~\cite{hutchinson-1990-stochastic-estimator, girard-1989-fast-montecarlo}; see~\cite[Algorithm 1]{chen-2023-scalable-kernelbased} for details.

Based entirely on matrix--vector products, the indirect algorithm is well suited for larger problem sizes. However, in contrast to the direct algorithm, the approximation needs to be recomputed from scratch when $\lambda$ changes, a significant disadvantage during the optimization process that needs to evaluate~\cref{equ:pml-criterion} for many different values of $\lambda$ and $\beta$.

\section{Krylov-augmented algorithm for evaluating PML criterion~\cref{equ:pml-criterion}}
\label{sec:kryl}

In this section, we describe our new procedure for evaluating the PML criterion~\cref{equ:pml-criterion}.
Like the indirect algorithm, it scales well to larger problem sizes, and, like the direct algorithm, it is cheap to recompute~\cref{equ:pml-criterion}
for different values of $\lambda$ after a preprocessing step.

\subsection{Approximation by augmentation of Krylov subspaces}

Again for fixed $\beta$, we consider the matrix $\mtx{A} = \mtx{\Phi} \mtx{K} \mtx{\Phi}^{\top}$ from~\cref{equ:matrix-definition}. We will make use of
(block) Krylov subspaces of the form
\begin{equation}
  \mathcal{K}_k(\mtx{A}, \mtx{Z}) = \operatorname{span}\{ \mtx{Z}, \mtx{A} \mtx{Z}, \dots, \mtx{A}^{k-1} \mtx{Z}\},
  \label{equ:block-krylov-space}
\end{equation}
with particular vectors or matrices $\mtx{Z}$.
An orthonormal basis $\mtx{W}_k$ of $\mathcal{K}_k(\mtx{A}, \mtx{Z})$, along with the
block tridiagonal matrix $\mtx{T}_k = \mtx{W}_k^{\top} \mtx{A} \mtx{W}_k$,
can be computed with the block Lanczos method \cite[Section 9.2.6]{golub-2013-matrix-computations}. The usual precautions for block Lanczos methods are in order: Reorthogonalization and block size adaptation.
For the latter, we use the procedure described in~\cite[Section 2.1]{zhou-2008-block-krylov} that adaptively shrinks the block size to avoid linear dependencies; see also \Cref{alg:block-lanczos}.
While our implementation uses this procedure, which indeed results in reduced block sizes, we assume throughout the following that the block size remains constant, for the sake of simplifying the description.
\begin{algorithm}
\caption{Block Lanczos method}
\label{alg:block-lanczos}
\begin{algorithmic}[1]

\Require Symmetric matrix $\mtx{A}$, starting block $\mtx{Z}$
\Require Number of iterations $k$, truncation tolerance $\tau > 0$
\Ensure Orthonormal basis $\mtx{W}_k$ of $\mathcal{K}_k(\mtx{A}, \mtx{Z})$ and $\mtx{T}_k = \mtx{W}_k^{\top} \mtx{A} \mtx{W}_k$

\State Compute QR-factorization $\mtx{Z} = \mtx{Q} \mtx{R}$
\State $\mtx{\mathcal{W}}_0 \gets \mtx{0}$, $\mtx{N}_1 \gets \mtx{0}$, $\mtx{\mathcal{W}}_1 \gets \mtx{Q}$

\For{$i = 1, \dots, k$}
    \State $\mtx{Y} \gets \mtx{A} \mtx{\mathcal{W}}_i - \mtx{\mathcal{W}}_{i - 1} \mtx{N}_i^{\top}$
    \State $\mtx{M}_i \gets \mtx{\mathcal{W}}_i^{\top}\mtx{Y}$
    \State $\mtx{Y} \gets \mtx{Y} - \mtx{\mathcal{W}}_i \mtx{M}_i$

    \For{$j = 1, \dots, i - 1$}
        \State $\mtx{Y} \gets \mtx{Y} - \mtx{\mathcal{W}}_j (\mtx{\mathcal{W}}_j^{\top} \mtx{Y})$
    \EndFor

    \State Compute pivoted QR-factorization $\displaystyle
    \mtx{Y} =
    \begin{bmatrix} \mtx{Q}_{1} & \mtx{Q}_2 \end{bmatrix}
    \begin{bmatrix}
        \mtx{R}_{11} & \mtx{R}_{12} \\
        \mtx{0} & \mtx{R}_{22}
    \end{bmatrix}
    \mtx{\Pi}^{\top}
    $

    $\rhd$ diagonal elements of $\mtx{R}_{22}$ have magnitude $< \tau$

    \State $\mtx{\mathcal{W}}_{i+1} \gets \mtx{Q}_1$
    \State $\mtx{N}_{i+1} \gets \begin{bmatrix} \mtx{R}_{11} & \mtx{R}_{12} \end{bmatrix} \mtx{\Pi}^{\top}$
\EndFor

\State $\mtx{T}_k \gets \textsc{block-tridiagonal}([\mtx{N}_2, \dots, \mtx{N}_k], [\mtx{M}_1, \dots, \mtx{M}_k], [\mtx{N}_2^{\top}, \dots, \mtx{N}_k^{\top}])$
\State $\mtx{W}_k \gets [\mtx{\mathcal{W}}_1, \dots, \mtx{\mathcal{W}}_k]$

\end{algorithmic}
\end{algorithm}

It is well known that multiplying a matrix with a block of $b > 1$ vectors can have computational advantages compared to multiplying the same matrix with the $b$ vectors individually. For the type of matrices considered in this work, this benefit of bundling vectors is clearly visible in~\Cref{fig:mat-vec-time}, even for small values of $b$. In the block Lanczos method, this effect partly offsets the additional computational complexity caused when increasing the block size, that is, the number of columns of $\mtx{Z}$.
\begin{figure}[!ht]
  \centering
  \input{plots/matvec_runtimes.pgf}
  \caption{Average runtime per vector for multiplying the matrix $\mtx{A} = \mtx{\Phi} \mtx{K} \mtx{\Phi}^{\top} \in \mathbb{R}^{m \times m}$ with a block of $b$ vectors, as a function of the matrix size $m$. We use a FIR length of $n = m/5$ and a TC kernel to generate $\mtx{A}$ \cite{chen-2012-estimation-transfer}. We plot the mean of 1'000 repetitions and shade the range of one standard deviation from the mean. The runtime is non-monotonic in the matrix size $m$ because the FFT used for $\mtx{\Phi}$ behaves differently depending on how well the input length is factorizable.}
  \label{fig:mat-vec-time}
\end{figure}

Given an orthonormal Krylov subspace basis $\mtx{W}_k$,
the quadratic form involved in the PML criterion~\cref{equ:pml-criterion} can be approximated by
\begin{equation}
  \vct{y}^{\top} (\lambda \mtx{I} + \mtx{A})^{-1} \vct{y} \approx \vct{y}^{\top} \mtx{W}_k (\lambda \mtx{I} + \mtx{T}_k)^{-1} \mtx{W}_k^{\top} \vct{y}.
  \label{equ:iqf-approximation}
\end{equation}
A typical choice for the Krylov subspace~\cref{equ:block-krylov-space} in this context is $\mtx{Z} = \vct{y}$~\cite[Section 7.2]{golub-2009-matrices-moments}. We consider the approximation
\begin{equation}
  \Trace(\log(\lambda \mtx{I} + \mtx{A})) \approx \Trace(\log(\lambda \mtx{I} + \mtx{W}_k \mtx{T}_k \mtx{W}_k^{\top}))
  \label{equ:trace-approximation}
\end{equation}
for the block Krylov subspace~\cref{equ:block-krylov-space} with $\mtx{Z} = \vct{\Omega}$ for some random matrix $\vct{\Omega}$; see, e.g.,~\cite{li-2021-randomized-block};
related techniques for approximating the trace of the matrix logarithm  in~\cref{equ:pml-criterion} have also been proposed in~\cite{cortinovis-2022-randomized-trace,li-2021-randomized-block,saibaba-2017-randomized-matrixfree,ubaru-2017-fast-estimation}.

Instead of performing the two approximations~\cref{equ:iqf-approximation,equ:trace-approximation} with two separate Krylov subspaces, we propose to combine them by setting $\mtx{Z} = [\vct{y}, \mtx{\Omega}]$. Hence, we compute an orthonormal basis $\widehat{\mtx{W}}_k$ and the corresponding compression $\widehat{\mtx{T}}_k = \widehat{\mtx{W}}_k^{\top} \mtx{A} \widehat{\mtx{W}}_k$ for the \emph{augmented} Krylov subspace
\begin{equation*}
  \mathcal{K}_{k}(\mtx{A}, [\vct{y}, \mtx{\Omega}]) = \operatorname{span}\{ \vct{y}, \mtx{\Omega}, \mtx{A} \vct{y}, \mtx{A} \mtx{\Omega}, \dots, \mtx{A}^{k-1} \vct{y}, \mtx{A}^{k-1} \mtx{\Omega} \} = \mathcal{K}_{k}(\mtx{A}, \vct{y}) + \mathcal{K}_{k}(\mtx{A}, \mtx{\Omega}).
\end{equation*}
We will see in~\Cref{sec:the,sec:num} that the augmented approximations~\cref{equ:iqf-approximation,,equ:trace-approximation} not only match the accuracy of the standard approximations, but usually yield significantly better results, both theoretically and in practice.
Moreover, by building the combined augmented Krylov subspace instead of running two separate block Lanczos algorithms (\Cref{alg:block-lanczos}), we achieve a noticeable speed up because of the benefits of bundling vectors discussed above and illustrated in~\Cref{fig:mat-vec-time}.

Further, the Krylov subspace~\cref{equ:block-krylov-space} is shift-invariant in the sense that $\mathcal{K}_k(\lambda \mtx{I} + \mtx{A}, \mtx{Z}) = \mathcal{K}_k(\mtx{A}, \mtx{Z})$ for every $\lambda \in \mathbb{R}$. In particular, this allows us to run~\Cref{alg:block-lanczos} only once to extract $\widehat{\mtx{W}}_k$ and $\widehat{\mtx{T}}_k$, which can then be used to cheaply form the approximations~\cref{equ:iqf-approximation,equ:trace-approximation} for several values of $\lambda$.

\subsection{Fast evaluation of approximations~\cref{equ:iqf-approximation,equ:trace-approximation}}
\label{subsec:pml-computation}

To speed up the evaluation of the approximations~\cref{equ:iqf-approximation,equ:trace-approximation} for multiple values of $\lambda$, we adapt the techniques developed for the direct algorithm; see~\Cref{subsec:direct}.
We first compute a spectral decomposition $\widehat{\mtx{T}}_k = \mtx{V} \mtx{\Theta} \mtx{V}^{\top}$, where the diagonal matrix $\mtx{\Theta} = \operatorname{diag}(\theta_1, \dots, \theta_{k (n_{\mtx{\Omega}} + 1)})$ contains the eigenvalues $\theta_1 \ge  \dots \ge \theta_{k (n_{\mtx{\Omega}} + 1)} \ge 0$. This allows us to rewrite the approximations \cref{equ:iqf-approximation,equ:trace-approximation} as
\begin{equation*}
  \vct{y}^{\top} \widehat{\mtx{W}}_k  (\lambda \mtx{I} + \widehat{\mtx{T}}_k)^{-1} \widehat{\mtx{W}}_k^{\top} \vct{y} = \sum_{i=1}^{k (n_{\mtx{\Omega}} + 1)} \widetilde{y}_i^2 (\lambda + \theta_i)^{-1},
\end{equation*}
with the elements $\widetilde{y}_i$ of $\widetilde{\vct{y}} = \mtx{V}^{\top} \widehat{\mtx{W}}_k^{\top} \vct{y}$, and
\begin{equation*}
  \Trace(\log(\lambda \mtx{I} + \widehat{\mtx{W}}_k \widehat{\mtx{T}}_k \widehat{\mtx{W}}_k^{\top})) = (m - k (n_{\mtx{\Omega}} + 1))\log(\lambda) + \sum_{i=1}^{k (n_{\mtx{\Omega}} + 1)} \log(\lambda + \theta_i).
\end{equation*}

\Cref{alg:krylov-augmented} summarizes the obtained procedure for approximating the PML criterion~\cref{equ:pml-criterion}.

\begin{algorithm}
\caption{Krylov-augmented algorithm for PML criterion}
\label{alg:krylov-augmented}
\begin{algorithmic}[1]

\Require Positive semidefinite matrix $\mtx{A}$, vector $\vct{y}$
\Require Block Lanczos iterations $k$, augmentation size $n_{\mtx{\Omega}}$
\Ensure Approximation of the PML criterion $\psi_{\mathrm{PML}}(\lambda)$ defined in~\cref{equ:pml-criterion}

\State Sample Gaussian random matrix $\mtx{\Omega} \in \mathbb{R}^{m \times n_{\mtx{\Omega}}}$
\State $(\mtx{T}_k, \mtx{W}_k) \gets \textsc{BlockLanczos}(\mtx{A}, [\vct{y}, \mtx{\Omega}], k)$ \Comment{using \Cref{alg:block-lanczos}}
\State Compute eigenvalue decomposition $\mtx{T}_k = \mtx{V} \mtx{\Theta} \mtx{V}^{\top}$
\State $\vct{\theta} \gets \operatorname{diag}(\mtx{\Theta})$
\State $\widetilde{\vct{y}} \gets \mtx{V}^{\top} \mtx{W}_k^{\top} \vct{y}$

\State \Return $(1 - \frac{k(n_{\mtx{\Omega}} + 1)}{m}) \log(\lambda) + \sum_{i=1}^{k (n_{\mtx{\Omega}} + 1)} \left( \widetilde{y}_i^2 (\lambda + \theta_i)^{-1} + \frac{\log(\lambda + \theta_i)}{m} \right)$

\end{algorithmic}
\end{algorithm}

\subsection{Residual trace estimation}
\label{subsec:residual-trace}

The decay of the eigenvalues of the kernel matrix $\mtx{K}$ depends on what kernel function is used and how it is parametrized. If the eigenvalues do not decay as rapidly, it may happen that the trace approximation~\cref{equ:trace-approximation} alone is not sufficiently accurate for reasonable values of $k$ and $n_{\mtx{\Omega}}$. To address this, we introduce an additional component to the algorithm, which aims to improve the original approximation~\cref{equ:trace-approximation} in a manner analogous to the Hutch++ estimator~\cite{lin-2017-randomized-estimation, meyer-2021-hutch-optimal}.

Specifically, we apply the Girard--Hutchinson trace estimator~\cite{girard-1989-fast-montecarlo,hutchinson-1990-stochastic-estimator} to the residual of the approximation~\cref{equ:trace-approximation}: We draw $n_{\mtx{\Psi}}$ additional standard Gaussian random vectors $\vct{\psi}_1, \dots, \vct{\psi}_{n_{\mtx{\Psi}}} \in \mathbb{R}^{m}$ and construct the estimator
\begin{equation}
  \Trace(\mtx{R}(\lambda)) \approx \frac{1}{n_{\mtx{\Psi}}} \sum_{i=1}^{n_{\mtx{\Psi}}} \vct{\psi}_i^{\top} \mtx{R}(\lambda) \vct{\psi}_i,
  \label{equ:residual-trace}
\end{equation}
with the symmetric positive semidefinite matrix $\mtx{R}(\lambda) = \log(\lambda \mtx{I} + \mtx{A}) - \log(\lambda \mtx{I} + \widehat{\mtx{W}}_k \widehat{\mtx{T}}_k \widehat{\mtx{W}}_k^{\top})$. We approximate the quadratic forms $\vct{\psi}_i^{\top} \log(\lambda \mtx{I} + \mtx{A}) \vct{\psi}_i$ and $\vct{\psi}_i^{\top} \log(\lambda \mtx{I} + \widehat{\mtx{W}}_k \widehat{\mtx{T}}_k \widehat{\mtx{W}}_k^{\top}) \vct{\psi}_i$, $i=1, \dots, n_{\mtx{\Psi}}$, using stochastic Lanczos quadrature~\cite{ubaru-2017-fast-estimation}. Although more direct methods are available for evaluating $\vct{\psi}_i^{\top} \log(\lambda \mtx{I} + \widehat{\mtx{W}}_k \widehat{\mtx{T}}_k \widehat{\mtx{W}}_k^{\top}) \vct{\psi}_i$~\cite[Theorem 1.35]{higham-2008-functions-matrices}, we observe numerically that approximating both terms with the same scheme is crucial for achieving higher accuracy. Motivated by \cite[Theorem 2.1]{matti-2025-stochastic-trace}, we reuse the same set of random vectors $\vct{\psi}_i$, $i=1, \dots, n_{\mtx{\Psi}}$ for every value of $\lambda$. A batched implementation of the Lanczos method, also called loop-interchange Lanczos method, allows us to exploit the computational benefits of bundled matrix--vector products shown in \Cref{fig:mat-vec-time}. Further, in an analogous manipulation as in \Cref{subsec:pml-computation}, the resulting approximations can again be expressed such that it is cheap to evaluate for different values of $\lambda$.

Once the residual estimate~\cref{equ:residual-trace} is computed, it is added as a correction to the trace estimate~\cref{equ:trace-approximation}.

\section{Theoretical analysis}
\label{sec:the}

The purpose of this section is to provide theoretical insight into the approximation returned by~\Cref{alg:krylov-augmented}. In particular,
we derive error bounds for the quadratic form approximation~\cref{equ:iqf-approximation} and the trace estimate~\cref{equ:trace-approximation}. For each approximation, we first analyze the case without augmentation and subsequently provide an argument that augmentation can only improve accuracy.

\subsection{Analysis of the quadratic form approximation~\cref{equ:iqf-approximation}}

In this section, to simplify notation, we absorb the regularization term $\lambda \mtx{I}$ into the matrix $\mtx{A}$ and, consequently, $\mtx A\gets \lambda \mtx{I} + \mtx A$ becomes positive definite.

\subsubsection{Without augmentation}

We first state and prove an upper bound on the approximation~\cref{equ:cg-qf-approximation} without augmentation.
Assuming that $\mtx{W}_k$ is the orthonormal basis of the Krylov subspace $\mathcal{K}_k(\mtx{A}, \vct{y})$ computed by the Lanczos method, the compressed matrix $\mtx{T}_k = \mtx{W}_k^{\top} \mtx{A} \mtx{W}_k$ is tridiagonal and the approximation~\cref{equ:iqf-approximation} can be expressed as
\begin{equation}
  \vct{y}^{\top} \mtx{A}^{-1} \vct{y} \approx \vct{y}^{\top} \mtx{W}_k \mtx{T}_k^{-1} \mtx{W}_k^{\top} \vct{y} = \lVert \vct{y} \rVert _2^2 (\mtx{T}_k^{-1})_{11}.
  \label{equ:cg-qf-approximation}
\end{equation}
Here, $(\mtx{T}_k^{-1})_{11} = \vct{e}_1^{\top} \mtx{T}_k^{-1} \vct{e}_1$ denotes the top left entry in the matrix $\mtx{T}_k^{-1}$.

\begin{lemma}
\label{lem:cg-bound}
With the notation introduced above, it holds that
\begin{equation*}
  0 \leq \frac{\vct{y}^{\top} \mtx{A}^{-1} \vct{y} - \lVert \vct{y} \rVert _2^2 (\mtx{T}_k^{-1})_{11}}{\vct{y}^{\top} \mtx{A}^{-1} \vct{y}} \leq 4 \left( \frac{\sqrt{\kappa} - 1}{\sqrt{\kappa} + 1} \right)^{2k},
\end{equation*}
where $\kappa$ is the condition number of $\mtx{A}$.
\end{lemma}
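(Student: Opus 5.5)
The plan is to identify the Lanczos approximation with the conjugate gradient (CG) method and then invoke the classical CG convergence bound in the $\mtx{A}$-norm. Let $\vct{x} = \mtx{A}^{-1}\vct{y}$ and $\vct{x}_k = \mtx{W}_k \mtx{T}_k^{-1} \mtx{W}_k^{\top} \vct{y}$. Since $\mtx{W}_k^{\top}\vct{y} = \lVert \vct{y}\rVert_2 \vct{e}_1$, we have $\vct{y}^{\top}\vct{x}_k = \lVert \vct{y} \rVert_2^2 (\mtx{T}_k^{-1})_{11}$.

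\textbf{Step 1: $\vct{x}_k$ is the Galerkin/CG iterate.} The vector $\vct{x}_k$ is the Galerkin approximation from $\mathcal{K}_k(\mtx{A},\vct{y})$. Indeed, $\vct{x}_k \in \operatorname{range}(\mtx{W}_k)$ and
\[
\mtx{W}_k^{\top}(\vct{y} - \mtx{A}\vct{x}_k) = \mtx{W}_k^{\top}\vct{y} - \mtx{T}_k\mtx{T}_k^{-1}\mtx{W}_k^{\top}\vct{y} = \vct{0}.
\]
This is exactly the $k$th CG iterate with zero initial guess. Equivalently, $\vct{x}_k$ is the $\mtx{A}$-orthogonal projection of $\vct{x}$ onto $\mathcal{K}_k(\mtx{A},\vct{y})$, so it minimizes $\lVert \vct{x} - \vct{z}\rVert_{\mtx{A}}$ over that subspace.

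\textbf{Step 2: the numerator is an $\mtx{A}$-norm error.} Galerkin orthogonality gives $(\vct{x}-\vct{x}_k)^{\top}\mtx{A}\vct{x}_k = 0$. Therefore
\[
\lVert \vct{x}-\vct{x}_k\rVert_{\mtx{A}}^2 = \vct{x}^{\top}\mtx{A}\vct{x} - \vct{x}_k^{\top}\mtx{A}\vct{x}_k = \vct{y}^{\top}\mtx{A}^{-1}\vct{y} - \vct{y}^{\top}\vct{x}_k,
\]
where the last step uses $\vct{x}_k^{\top}\mtx{A}\vct{x}_k = \vct{x}_k^{\top}\mtx{A}\vct{x} = \vct{x}_k^{\top}\vct{y}$. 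Thus the numerator in the lemma equals $\lVert \vct{x}-\vct{x}_k\rVert_{\mtx{A}}^2 \ge 0$, which gives the lower bound. Also $\vct{y}^{\top}\mtx{A}^{-1}\vct{y} = \lVert \vct{x}\rVert_{\mtx{A}}^2 = \lVert \vct{x} - \vct{x}_0\rVert_{\mtx{A}}^2$ with $\vct{x}_0 = \vct{0}$. Hence the ratio in the lemma is $\lVert \vct{x}-\vct{x}_k\rVert_{\mtx{A}}^2 / \lVert \vct{x}-\vct{x}_0\rVert_{\mtx{A}}^2$.

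\textbf{Step 3: the CG bound.} The remaining step is the standard CG estimate
\[
\lVert \vct{x}-\vct{x}_k\rVert_{\mtx{A}} \le 2\left(\frac{\sqrt{\kappa}-1}{\sqrt{\kappa}+1}\right)^k \lVert \vct{x}-\vct{x}_0\rVert_{\mtx{A}}.
\]
This follows from the minimization property: $\vct{x}-\vct{x}_k = p(\mtx{A})\vct{x}$ minimizes over polynomials $p$ of degree at most $k$ with $p(0)=1$. Expanding in the eigenbasis of $\mtx{A}$ bounds the ratio by $\min_p \max_{\mu\in[\lambda_{\min},\lambda_{\max}]}|p(\mu)|$. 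Taking $p$ to be the shifted and scaled Chebyshev polynomial gives the factor $2\bigl((\sqrt{\kappa}-1)/(\sqrt{\kappa}+1)\bigr)^k$. Squaring this yields the claimed constant $4$ and exponent $2k$.

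\textbf{Main obstacle.} The only delicate point is Step 1, namely justifying that the Lanczos quadrature value coincides with $\vct{y}^{\top}\vct{x}_k$ for the CG iterate. This needs $\mtx{T}_k$ to be invertible, which holds because $\mtx{A}$ is positive definite and hence so is $\mtx{T}_k$. If the Krylov space becomes invariant before step $k$, the error is zero and the bound holds trivially. The rest is the textbook Chebyshev argument, which I would cite rather than reprove.
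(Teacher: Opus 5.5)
Your proposal is correct and follows the same route as the paper. You identify $\vct{x}_k = \lVert \vct{y}\rVert_2 \mtx{W}_k \mtx{T}_k^{-1}\vct{e}_1$ with the $k$th CG iterate, show that the numerator equals $\lVert \vct{x}-\vct{x}_k\rVert_{\mtx{A}}^2$, and then apply the standard CG bound; the only cosmetic difference is that you obtain the error identity via Galerkin orthogonality, whereas the paper expands $\lVert \vct{x}-\vct{x}_k\rVert_{\mtx{A}}^2$ directly using $\mtx{W}_k^{\top}\vct{y} = \lVert \vct{y}\rVert_2\vct{e}_1$ and $\mtx{W}_k^{\top}\mtx{A}\mtx{W}_k = \mtx{T}_k$.
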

\begin{proof}
The vector $\vct{x}_k = \lVert \vct{y} \rVert _2 \mtx{W}_k \mtx{T}_k^{-1} \vct{e}_1$ coincides with the approximation
obtained by applying $k$ iterations of the conjugate gradient method to the linear system $\mtx{A} \vct{x} = \vct{y}$ with starting vector $\vct{x}_0 = \vct{0}$.
These iterations are well known to satisfy
\begin{equation}
  \lVert \vct{x} - \vct{x}_k \rVert _{\mtx{A}}^2 \leq 4 \lVert \vct{x} \rVert _{\mtx{A}}^2 \left( \frac{\sqrt{\kappa} - 1}{\sqrt{\kappa} + 1} \right)^{2k};
  \label{equ:cg-bound}
\end{equation}
see, e.g.,~\cite[Theorem 3.1.1]{greenbaum-1997-iterative-methods}. Using the definition of the $\mtx{A}$-norm, this error can be rewritten as
\begin{align*}
  \lVert \vct{x} - \vct{x}_k \rVert _{\mtx{A}}^2 &= \vct{x}^{\top} \mtx{A} \vct{x} - 2 \vct{x}^{\top} \mtx{A} \vct{x}_k + \vct{x}_k^{\top} \mtx{A} \vct{x}_k = \vct{y}^{\top} \mtx{A}^{-1} \vct{y} - 2 \vct{y}^{\top} \vct{x}_k + \vct{x}_k^{\top} \mtx{A} \vct{x}_k \\
  &= \vct{y}^{\top} \mtx{A}^{-1} \vct{y} - 2 \lVert \vct{y} \rVert _2 \underbrace{ \vct{y}^{\top}  \mtx{W}_k}_{=~\lVert \vct{y} \rVert _2\vct{e}_1^{\top}} \mtx{T}_k^{-1} \vct{e}_1 + \lVert \vct{y} \rVert _2^2  \vct{e}_1^{\top} \mtx{T}_k^{-1} \underbrace{\mtx{W}_k^{\top} \mtx{A} \mtx{W}_k}_{=~\mtx{T}_k} \mtx{T}_k^{-1} \vct{e}_1.
\end{align*}
Consequently,
\begin{equation}
  \lVert \vct{x} - \vct{x}_k \rVert _{\mtx{A}}^2 = \vct{y}^{\top} \mtx{A}^{-1} \vct{y} - \lVert \vct{y} \rVert _2^2 \vct{e}_1^{\top} \mtx{T}_k^{-1} \vct{e}_1 = \vct{y}^{\top} \mtx{A}^{-1} \vct{y} - \lVert \vct{y} \rVert _2^2 (\mtx{T}_k^{-1})_{11},
  \label{equ:cg-connection-to-qf}
\end{equation}
or in other words, the squared $\mtx{A}$-norm-error of the conjugate gradient iterates is the same as the residual of the quadratic form approximation. Hence, inserting~\cref{equ:cg-connection-to-qf} into the standard conjugate gradient result~\cref{equ:cg-bound} with $\lVert \vct{x} \rVert _{\mtx{A}}^2 = \vct{x}^{\top} \mtx{A} \vct{x} = \vct{y}^{\top} \mtx{A}^{-1} \vct{y}$ shows the bound.
\end{proof}

\subsubsection{With augmentation}

We now analyze the use of an augmented Krylov subspace $\mathcal{K}_k(\mtx{A}, [\vct{y}, \mtx{\Omega}])$ for some matrix $\mtx{\Omega} \in \mathbb{R}^{m \times n_{\mtx{\Omega}}}$
in the quadratic form approximation~\cref{equ:iqf-approximation}. For this purpose, we consider the orthonormal basis $\widehat{\mtx{W}}_k$ and
the block tridiagonal matrix $\widehat{\mtx{T}}_k$ computed with the block Lanczos method applied to $\mathcal{K}_k(\mtx{A}, [\vct{y}, \mtx{\Omega}])$. Then an expression analogous to~\cref{equ:cg-qf-approximation} holds when $\mtx{W}_k$ and $\mtx{T}_k$ are replaced with $\widehat{\mtx{W}}_k$ and $\widehat{\mtx{T}}_k$:
\begin{equation}
  \vct{y}^{\top} \mtx{A}^{-1} \vct{y} \approx \vct{y}^{\top} \widehat{\mtx{W}}_k \widehat{\mtx{T}}_k^{-1} \widehat{\mtx{W}}_k^{\top} \vct{y} = \lVert \vct{y} \rVert _2^2 (\widehat{\mtx{T}}_k^{-1})_{11}.
  \label{equ:cg-qf-augmented-approximation}
\end{equation}
The following result shows that this augmented approximation can only improve the approximation.

\begin{lemma}[Augmentation does not harm]
\label{lem:cg-bound-augmented}
With the notation introduced above, it holds that
\begin{equation*}
  0 \leq \vct{y}^{\top} \mtx{A}^{-1} \vct{y} - \lVert \vct{y} \rVert _2^2 (\widehat{\mtx{T}}_k^{-1})_{11} \leq \vct{y}^{\top} \mtx{A}^{-1} \vct{y} - \lVert \vct{y} \rVert _2^2 (\mtx{T}_k^{-1})_{11}.
\end{equation*}
\end{lemma}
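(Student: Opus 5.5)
The plan is to reinterpret both approximations as Galerkin (energy-norm optimal) approximations and use nestedness of the subspaces. For any subspace $\mathcal{V}$ with orthonormal basis $\mtx{V}$, the vector $\vct{x}_{\mathcal{V}} = \mtx{V}(\mtx{V}^{\top}\mtx{A}\mtx{V})^{-1}\mtx{V}^{\top}\vct{y}$ is the $\mtx{A}$-orthogonal projection of $\vct{x} = \mtx{A}^{-1}\vct{y}$ onto $\mathcal{V}$. Hence it minimizes $\lVert \vct{x} - \vct{z}\rVert_{\mtx{A}}$ over $\vct{z} \in \mathcal{V}$. This follows since the Galerkin condition $\mtx{V}^{\top}(\vct{y} - \mtx{A}\vct{x}_{\mathcal{V}}) = \vct{0}$ is exactly $\mtx{A}$-orthogonality of the error to $\mathcal{V}$.

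Next I would repeat the computation from the proof of \Cref{lem:cg-bound} for a general $\mathcal{V}$ containing $\vct{y}$. Expanding the $\mtx{A}$-norm and using $\mtx{V}^{\top}\mtx{A}\mtx{V} = \mtx{T}$ gives
\begin{equation*}
\lVert \vct{x} - \vct{x}_{\mathcal{V}}\rVert_{\mtx{A}}^2 = \vct{y}^{\top}\mtx{A}^{-1}\vct{y} - \vct{y}^{\top}\mtx{V}\mtx{T}^{-1}\mtx{V}^{\top}\vct{y}.
\end{equation*}
Applying this with $\mathcal{V} = \mathcal{K}_k(\mtx{A},[\vct{y},\mtx{\Omega}])$ and the block Lanczos basis $\widehat{\mtx{W}}_k$ requires identifying the right-hand quadratic term with $\lVert\vct{y}\rVert_2^2(\widehat{\mtx{T}}_k^{-1})_{11}$. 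This holds because the first column of $\widehat{\mtx{W}}_k$ is $\vct{y}/\lVert\vct{y}\rVert_2$, coming from the QR factorization of $[\vct{y},\mtx{\Omega}]$ with $\vct{y}$ placed first. So $\widehat{\mtx{W}}_k^{\top}\vct{y} = \lVert\vct{y}\rVert_2\vct{e}_1$, as already used in~\cref{equ:cg-qf-augmented-approximation}.

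Both differences in the statement are therefore squared $\mtx{A}$-norm errors of best approximations, from $\mathcal{K}_k(\mtx{A},[\vct{y},\mtx{\Omega}])$ and from $\mathcal{K}_k(\mtx{A},\vct{y})$ respectively, the latter via~\cref{equ:cg-connection-to-qf}. Nonnegativity is immediate since they are squared norms. The upper inequality follows from the inclusion $\mathcal{K}_k(\mtx{A},\vct{y}) \subseteq \mathcal{K}_k(\mtx{A},\vct{y}) + \mathcal{K}_k(\mtx{A},\mtx{\Omega}) = \mathcal{K}_k(\mtx{A},[\vct{y},\mtx{\Omega}])$: a minimum over a larger set is no larger.

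The only delicate point is the block Lanczos bookkeeping. One must verify that the basis is indeed orthonormal and spans the full augmented space, and that its first column is a multiple of $\vct{y}$ even with pivoted QR or block-size reduction. This holds if $\vct{y}$ is kept as the leading column in the initial QR. In any case the projected quantity $\vct{y}^{\top}\widehat{\mtx{W}}_k\widehat{\mtx{T}}_k^{-1}\widehat{\mtx{W}}_k^{\top}\vct{y}$ is basis-independent, so the argument goes through as long as $\vct{y}$ lies in the span.
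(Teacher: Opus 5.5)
Your proposal is correct and follows the paper's proof. Both arguments express each residual as the squared $\mtx{A}$-norm error of the corresponding Galerkin (CG) iterate, using the computation from \Cref{lem:cg-bound}, and then conclude from the minimization property over the nested subspaces $\mathcal{K}_k(\mtx{A},\vct{y}) \subseteq \mathcal{K}_k(\mtx{A},[\vct{y},\mtx{\Omega}])$. You also justify two points the paper takes for granted: the minimization property, via the Galerkin condition, and $\widehat{\mtx{W}}_k^{\top}\vct{y} = \lVert\vct{y}\rVert_2\vct{e}_1$. The latter holds up to an irrelevant sign because the initial QR in \Cref{alg:block-lanczos} is unpivoted.
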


\begin{proof}
The conjugate gradient iterates $\vct{x}_k = \lVert \vct{y} \rVert _2 \mtx{W}_k \mtx{T}_k^{-1} \vct{e}_1$ and $\widehat{\vct{x}}_k = \lVert \vct{y} \rVert _2 \widehat{\mtx{W}}_k \widehat{\mtx{T}}_k^{-1} \vct{e}_1$ satisfy the inequality
\begin{equation}
  \lVert \vct{x} - \widehat{\vct{x}}_k \rVert _{\mtx{A}}^2 = \min_{\vct{u} \in \mathcal{K}_k(\mtx{A}, [\vct{y}, \mtx{\Omega}])} \lVert \vct{x} - \vct{u} \rVert _{\mtx{A}}^2 \leq \min_{\vct{u} \in \mathcal{K}_k(\mtx{A}, \vct{y})} \lVert \vct{x} - \vct{u} \rVert _{\mtx{A}}^2 = \lVert \vct{x} - \vct{x}_k \rVert _{\mtx{A}}^2,
  \label{equ:cg-error-monotonicity}
\end{equation}
where we used $\mathcal{K}_k(\mtx{A}, \vct{y}) \subseteq \mathcal{K}_k(\mtx{A}, [\vct{y}, \mtx{\Omega}])$.
Following the proof of~\Cref{lem:cg-bound}, an analogous expression for the $\mtx{A}$-norm-error of the approximation~\cref{equ:cg-qf-augmented-approximation} can be established:
\begin{equation}
  \vct{y}^{\top} \mtx{A}^{-1} \vct{y} - \lVert \vct{y} \rVert _2^2 (\widehat{\mtx{T}}_k^{-1})_{11} = \lVert \vct{x} - \widehat{\vct{x}}_k \rVert _{\mtx{A}}^2.
  \label{equ:acg-connection-to-qf}
\end{equation}
Inserting~\cref{equ:acg-connection-to-qf} along with~\cref{equ:cg-connection-to-qf} into~\cref{equ:cg-error-monotonicity} shows the claim.
\end{proof}

\Cref{lem:cg-bound-augmented} merely establishes that augmentation does no harm. Theorem 5 in~\cite{oleary-1980-block-conjugate} shows that augmentation enjoys an improved  bound of the form~\Cref{lem:cg-bound}, with a reduced condition number $\kappa$. Following the developments in~\cite{chen-2025-preconditioning-preconditioner}, we provide a different interpretation, relating augmentation to a certain class of preconditioners.

\begin{theorem}[Implicit preconditioning by augmentation]
\label{thm:implicit-preconditioning}
With the notation introduced above, it holds that
  \begin{equation*}
    0 \leq \frac{\vct{y}^{\top} \mtx{A}^{-1} \vct{y} - \lVert \vct{y} \rVert _2^2 (\widehat{\mtx{T}}_k^{-1})_{11}}{\vct{y}^{\top} \mtx{A}^{-1} \vct{y}} \leq 4 \left( \frac{\sqrt{\widetilde{\kappa}} - 1}{\sqrt{\widetilde{\kappa}} + 1} \right)^{2(k-s)},
  \end{equation*}
  where $\widetilde{\kappa}$ is the condition number of the ``preconditioned'' matrix $\mtx{P}^{-1/2} \mtx{A} \mtx{P}^{-1/2}$ for any preconditioner $\mtx{P} = (\mtx{I} + \mtx{X})^{-1}$ with $\operatorname{range}(\mtx{X}) \subseteq \mathcal{K}_{s+1}(\mtx{A}, \mtx{\Omega})$ for some $s \leq k$.
\end{theorem}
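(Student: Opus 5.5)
The plan is to reduce the augmented approximation to preconditioned conjugate gradients via the optimality property \cref{equ:cg-error-monotonicity}. By \cref{equ:acg-connection-to-qf}, the numerator equals $\lVert \vct{x} - \widehat{\vct{x}}_k \rVert_{\mtx{A}}^2 = \min_{\vct{u} \in \mathcal{K}_k(\mtx{A},[\vct{y},\mtx{\Omega}])} \lVert \vct{x} - \vct{u} \rVert_{\mtx{A}}^2$, where $\vct{x} = \mtx{A}^{-1}\vct{y}$. It therefore suffices to exhibit one vector $\vct{u}$ in the augmented space whose $\mtx{A}$-norm error is bounded by $4\lVert\vct{x}\rVert_{\mtx{A}}^2$ times the stated factor. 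The natural candidate is the $(k-s)$-th iterate of preconditioned CG with preconditioner $\mtx{P}$, started from $\vct{x}_0 = \vct{0}$. That iterate minimizes the $\mtx{A}$-norm error over $\mathcal{K}_{k-s}(\mtx{P}^{-1}\mtx{A}, \mtx{P}^{-1}\vct{y})$, and the standard PCG bound (the analogue of \cref{equ:cg-bound} applied to $\mtx{P}^{-1/2}\mtx{A}\mtx{P}^{-1/2}$) gives exactly the claimed factor with exponent $2(k-s)$ and condition number $\widetilde{\kappa}$. The lower bound $0 \le \cdot$ already follows from \Cref{lem:cg-bound-augmented}.

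The key step is the subspace inclusion
\[
\mathcal{K}_{k-s}(\mtx{P}^{-1}\mtx{A}, \mtx{P}^{-1}\vct{y}) \subseteq \mathcal{K}_k(\mtx{A},\vct{y}) + \mathcal{K}_k(\mtx{A},\mtx{\Omega}) = \mathcal{K}_k(\mtx{A},[\vct{y},\mtx{\Omega}]).
\]
Write $\mtx{P}^{-1} = \mtx{I} + \mtx{X}$ and let $\mathcal{S}_j := \mathcal{K}_j(\mtx{A},\vct{y}) + \mathcal{K}_{j+s}(\mtx{A},\mtx{\Omega})$. I will prove by induction on $j = 1,\dots,k-s$ that $(\mtx{P}^{-1}\mtx{A})^{j-1}\mtx{P}^{-1}\vct{y} \in \mathcal{S}_j$. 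Since $j+s \le k$, this gives the inclusion.

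The induction rests on three facts.
\begin{itemize}
\item $\operatorname{range}(\mtx{X}) \subseteq \mathcal{K}_{s+1}(\mtx{A},\mtx{\Omega})$, which is the hypothesis.
\item $\mtx{A}\,\mathcal{K}_j(\mtx{A},\vct{y}) \subseteq \mathcal{K}_{j+1}(\mtx{A},\vct{y})$, and similarly $\mtx{A}\,\mathcal{K}_{j+s}(\mtx{A},\mtx{\Omega}) \subseteq \mathcal{K}_{j+s+1}(\mtx{A},\mtx{\Omega})$.
\item $(\mtx{I}+\mtx{X})\vct{v} \in \vct{v} + \mathcal{K}_{s+1}(\mtx{A},\mtx{\Omega})$ for every vector $\vct{v}$.
\end{itemize}
For the base case, $\mtx{P}^{-1}\vct{y} = \vct{y} + \mtx{X}\vct{y} \in \mathcal{S}_1$. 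For the inductive step, applying $\mtx{A}$ maps $\mathcal{S}_j$ into $\mathcal{K}_{j+1}(\mtx{A},\vct{y}) + \mathcal{K}_{j+s+1}(\mtx{A},\mtx{\Omega})$. Applying $\mtx{I}+\mtx{X}$ afterwards only adds an element of $\mathcal{K}_{s+1}(\mtx{A},\mtx{\Omega}) \subseteq \mathcal{K}_{j+s+1}(\mtx{A},\mtx{\Omega})$, so the result lies in $\mathcal{S}_{j+1}$.

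Since $\mathcal{K}_{k-s}(\mtx{P}^{-1}\mtx{A}, \mtx{P}^{-1}\vct{y})$ is spanned by these vectors, the inclusion holds. The minimum over the larger augmented space is then at most the PCG error, and dividing by $\lVert\vct{x}\rVert_{\mtx{A}}^2 = \vct{y}^{\top}\mtx{A}^{-1}\vct{y}$ yields the theorem.

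I expect the main obstacle to be justifying the PCG step cleanly rather than the inclusion itself. I must check that $\mtx{P}$ is symmetric positive definite so that $\mtx{P}^{-1/2}$ and $\widetilde{\kappa}$ make sense; I will assume $\mtx{X}$ is symmetric with $\mtx{I}+\mtx{X} \succ 0$, which the statement implicitly requires. I must also confirm that the PCG iterate minimizes the $\mtx{A}$-norm error over $\mathcal{K}_{k-s}(\mtx{P}^{-1}\mtx{A},\mtx{P}^{-1}\vct{y})$, and not over some transformed space. This follows from running CG on $\mtx{P}^{-1/2}\mtx{A}\mtx{P}^{-1/2}\vct{z} = \mtx{P}^{-1/2}\vct{y}$ and substituting $\vct{x} = \mtx{P}^{-1/2}\vct{z}$. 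The $\mtx{A}$-norm of $\vct{x}$ equals the $\mtx{P}^{-1/2}\mtx{A}\mtx{P}^{-1/2}$-norm of $\vct{z}$, so \cref{equ:cg-bound} transfers verbatim. Finally, the degenerate case $s = k$ is trivial, since the bound is then $4 \ge 1$.
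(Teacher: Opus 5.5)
Your proposal is correct and follows the paper's route. It reduces the numerator to $\min_{\vct{u}\in\mathcal{K}_k(\mtx{A},[\vct{y},\mtx{\Omega}])}\lVert\vct{x}-\vct{u}\rVert_{\mtx{A}}^2$, compares with the $(k-s)$th PCG iterate via $\mathcal{K}_{k-s}(\mtx{P}^{-1}\mtx{A},\mtx{P}^{-1}\vct{y})\subseteq\mathcal{K}_k(\mtx{A},[\vct{y},\mtx{\Omega}])$, and applies \cref{equ:cg-bound} to $\mtx{P}^{-1/2}\mtx{A}\mtx{P}^{-1/2}$. The only difference is that the paper cites this inclusion from \cite[Theorem 3.2]{chen-2025-preconditioning-preconditioner}, whereas your induction on $\mathcal{S}_j$ proves it directly, even in the slightly sharper form with $\mathcal{K}_{k-s}(\mtx{A},\vct{y})+\mathcal{K}_k(\mtx{A},\mtx{\Omega})$ on the right, so your argument is self-contained.
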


\begin{proof}
By~\cite[Theorem 3.2]{chen-2025-preconditioning-preconditioner}, the properties of $\mtx{P} = (\mtx{I} + \mtx{X})^{-1}$ imply that
\begin{equation*}
  \mathcal{K}_{k-s}(\mtx{P}^{-1} \mtx{A}, \mtx{P}^{-1} \vct{y}) \subseteq \mathcal{K}_{k}(\mtx{A}, [\vct{y}, \mtx{\Omega}]).
\end{equation*}
As in the proof of~\Cref{lem:cg-bound-augmented}, we obtain the inequality
\begin{equation*}
  \lVert \vct{x} - \widehat{\vct{x}}_k \rVert _{\mtx{A}}^2 = \min_{\vct{u} \in \mathcal{K}_k(\mtx{A}, [\vct{y}, \mtx{\Omega}])} \lVert \vct{x} - \vct{u} \rVert _{\mtx{A}}^2 \leq \min_{\vct{u} \in \mathcal{K}_{k-s}(\mtx{P}^{-1} \mtx{A}, \mtx{P}^{-1} \vct{y})} \lVert \vct{x} - \vct{u} \rVert _{\mtx{A}}^2 = \lVert \vct{x} - \widetilde{\vct{x}}_{k-s} \rVert _{\mtx{A}}^2,
\end{equation*}
where $\widetilde{\vct{x}}_{k-s}$ is the $(k-s)$th iterate of the preconditioned conjugate gradient method with preconditioner $\mtx{P}$. Note that, equivalently,
$\mtx{P}^{1/2} \widetilde{\vct{x}}_{k-s}$ is the $(k-s)$th iterate
of the conjugate gradient method applied to $\mtx{P}^{-1/2} \mtx{A} \mtx{P}^{-1/2} \vct z =
\mtx{P}^{-1/2} \vct y$.
Applying~\cref{equ:cg-bound}, we obtain
\begin{equation*}
  \lVert \vct{x} - \widetilde{\vct{x}}_{k-s} \rVert _{\mtx{A}}^2 \leq 4 \lVert \vct{x} \rVert _{\mtx{A}}^2 \left( \frac{\sqrt{\widetilde{\kappa}} - 1}{\sqrt{\widetilde{\kappa}} + 1} \right)^{2(k-s)}.
  \label{equ:pcg-bound}
\end{equation*}
Together with~\cref{equ:acg-connection-to-qf}, the claim follows.
\end{proof}

There are many constructions of matrices $\mtx{P}$ that satisfy the requirements of~\Cref{thm:implicit-preconditioning}. For matrices $\mtx{A}$ whose eigenvalue decay rapidly---as observed for the matrices of interest in our application (cf.~\cref{equ:matrix-definition})---a particularly suitable choice are Nyström preconditioners
\begin{equation*}
  \mtx{P}^{-1} = \mtx{I} + \mtx{U}\left( C(\Lambda + c \mtx{I})^{-1} - \mtx{I} \right)\mtx{U}^{\top}, ~C>c>0,
\end{equation*}
where $\mtx{U} \mtx{\Lambda} \mtx{U}^{\top}$ is a (truncated) eigendecomposition of a Nyström approximation of $\mtx{A}$. The approximation is constructed using a sketching matrix whose columns span $\mathcal{K}_{s+1}(\mtx{A}, \mtx{\Omega})$; see~\cite{chen-2025-preconditioning-preconditioner,frangella-2023-randomized-nystrom}. By construction, the columns of $\mtx{U}$ lie in $\mathcal{K}_{s+1}(\mtx{A}, \mtx{\Omega})$, so indeed the conditions of \Cref{thm:implicit-preconditioning} are satisfied.
For a Gaussian random matrix $\mtx{\Omega} \in \mathbb{R}^{n \times n_{\mtx{\Omega}}}$,~\cite[Theorem 4.4]{chen-2025-preconditioning-preconditioner} shows that, for sufficiently large $n_{\mtx{\Omega}}$ and $s$, the condition number $\widetilde{\kappa}$ of the preconditioned matrix $\mtx{P}^{-1/2} \mtx{A} \mtx{P}^{-1/2}$ remains moderate with high probability.

\subsection{Analysis of the trace estimate~\cref{equ:trace-approximation}}

We return to the original definition~\cref{equ:matrix-definition}
of the symmetric positive semidefinite matrix $\mtx{A}$
and treat the regularization $\lambda \mtx{I}$ with $\lambda > 0$ separately instead of absorbing it into $\mtx A$.

\subsubsection{Without augmentation}
\label{subsubsec:log-trace-approx}

In this section, we consider the approximation~\cref{equ:trace-approximation}, that is,
\begin{equation*}
  \Trace(\log(\lambda \mtx{I} + \mtx{A})) \approx \Trace(\log(\lambda \mtx{I} + \mtx{W}_k \mtx{T}_k \mtx{W}_k^{\top})),
  \label{equ:log-trace-approximation}
\end{equation*}
with $\mtx{W}_k$ being an orthonormal basis of the Krylov subspace $\mathcal{K}_k(\mtx{A}, \mtx{\Omega})$ and $\mtx{T}_k = \mtx{W}_k^{\top} \mtx{A} \mtx{W}_k$. The following error bound follows from~\cite[Theorem 3.2]{li-2021-randomized-block} with some minor adaptations. It uses Chebyshev polynomials to construct a polynomial with controlled growth on the spectrum of $\mtx{A}$. This polynomial is then used to represent elements in the Krylov subspace and its properties help bound several quantities involving it in terms of the trailing eigenvalues of $\mtx{A}$.
\begin{lemma}
  \label{lem:log-trace-standard}
  Using the notation from above, let $n_{\mtx{\Omega}} = q + p$ for some $q,p \geq 2$. Then, with probability at least $1 - \delta$,
  \begin{align*}
    0 &\leq \Trace(\log(\lambda \mtx{I} + \mtx{A})) - \Trace(\log(\lambda \mtx{I} + \mtx{W}_k \mtx{T}_k \mtx{W}_k^{\top})) \\
    &\leq \Trace(\log(\mtx{I} + C\frac{\lambda_{q+1}}{\lambda_q} T_{k-2}^{-2}\left(\frac{2\lambda_q - \lambda_{q+1}}{\lambda_{q+1}}\right) \lambda^{-1} \mtx{\Lambda}_2)) + \Trace(\log(\mtx{I} + \lambda^{-1} \mtx{\Lambda}_2)),
  \end{align*}
  where $C = \left(\sqrt{n-q} + \sqrt{n_{\mtx{\Omega}}} + \sqrt{2\log(\frac{2}{\delta})}  \right)^2 (\frac{2}{\delta})^{\frac{2}{p+1}} \left( \frac{e \sqrt{n_{\mtx{\Omega}}}}{p + 1}\right)^2$, $T_{k-2}$ the $(k-2)$th Chebyshev polynomial, and $\mtx{\Lambda}_2 = \operatorname{diag}(\lambda_{q+1}, \dots, \lambda_m)$, where $\lambda_i$ denotes the $i$th largest eigenvalue of $\mtx{A}$.
\end{lemma}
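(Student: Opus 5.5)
The plan is to follow the structure of \cite[Theorem 3.2]{li-2021-randomized-block}: write the error in terms of a low-rank approximation of $\mtx{A}$ and control that approximation with a Chebyshev-filtered randomized range finder.

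\textbf{Step 1: reduction to $\mtx{Q}\mtx{Q}^{\top}\mtx{A}\mtx{Q}\mtx{Q}^{\top}$.} Write $\mtx{Q}=\mtx{W}_k$, so that $\mtx{W}_k\mtx{T}_k\mtx{W}_k^{\top}=\mtx{Q}\mtx{Q}^{\top}\mtx{A}\mtx{Q}\mtx{Q}^{\top}$.

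\textbf{Step 2: lower bound.} The lower bound $0\le\cdot$ follows from two facts. First, $\mtx{Q}\mtx{Q}^{\top}\mtx{A}\mtx{Q}\mtx{Q}^{\top}$ is a compression of $\mtx{A}$. Second, by Cauchy interlacing, its nonzero eigenvalues (those of $\mtx{T}_k$) satisfy $\theta_i\le\lambda_i$. Hence $\sum_i\log(\lambda+\theta_i)\le\sum_i\log(\lambda+\lambda_i)$, the remaining eigenvalues contributing $\log\lambda\le\log(\lambda+\lambda_i)$.

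\textbf{Step 3: upper bound by subadditivity.} For the upper bound I would use the subadditivity argument of Li and Zhu. Set $\mtx{M}=\lambda^{-1}\mtx{A}$, $\mtx{B}=\mtx{Q}\mtx{Q}^{\top}\mtx{M}\mtx{Q}\mtx{Q}^{\top}$. Then
\begin{equation*}
\Trace\log(\mtx{I}+\mtx{M})-\Trace\log(\mtx{I}+\mtx{B})=\log\det(\mtx{I}+\mtx{M})-\log\det(\mtx{I}+\mtx{B}).
\end{equation*}
Decompose $\mtx{M}^{1/2}=\mtx{Q}\mtx{Q}^{\top}\mtx{M}^{1/2}+(\mtx{I}-\mtx{Q}\mtx{Q}^{\top})\mtx{M}^{1/2}$. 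By a Sylvester/determinant argument (e.g.\ $\det(\mtx{I}+\mtx{X}^{\top}\mtx{X})$ with a block splitting and the inequality $\log\det(\mtx{I}+\mtx{G}+\mtx{H})\le\log\det(\mtx{I}+\mtx{G})+\log\det(\mtx{I}+\mtx{H})$ for psd $\mtx{G},\mtx{H}$), the error is bounded by
\begin{equation*}
\Trace\log\bigl(\mtx{I}+\mtx{M}^{1/2}(\mtx{I}-\mtx{Q}\mtx{Q}^{\top})\mtx{M}^{1/2}\bigr),
\end{equation*}
which equals $\Trace\log(\mtx{I}+\lambda^{-1}(\mtx{I}-\mtx{Q}\mtx{Q}^{\top})\mtx{A}(\mtx{I}-\mtx{Q}\mtx{Q}^{\top}))$.

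\textbf{Step 4: splitting the residual.} Split $\mtx{A}=\mtx{U}_1\mtx{\Lambda}_1\mtx{U}_1^{\top}+\mtx{U}_2\mtx{\Lambda}_2\mtx{U}_2^{\top}$ and use subadditivity once more. The tail part is bounded by $\Trace\log(\mtx{I}+\lambda^{-1}\mtx{\Lambda}_2)$, since compression by a projector can only decrease eigenvalues of the psd matrix. This gives the second term of the bound. The head part is $\Trace\log(\mtx{I}+\lambda^{-1}\mtx{\Lambda}_1^{1/2}\mtx{U}_1^{\top}(\mtx{I}-\mtx{Q}\mtx{Q}^{\top})\mtx{U}_1\mtx{\Lambda}_1^{1/2})$.

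\textbf{Step 5: Chebyshev filter.} Let $\phi(t)=T_{k-2}\bigl((2t-\lambda_{q+1})/\lambda_{q+1}\bigr)$. This has degree $k-2\le k-1$, so $\operatorname{range}(\phi(\mtx{A})\mtx{\Omega})\subseteq\mathcal{K}_k(\mtx{A},\mtx{\Omega})$, and $|\phi|\le1$ on $[0,\lambda_{q+1}]$ while $|\phi|\ge T_{k-2}\bigl((2\lambda_q-\lambda_{q+1})/\lambda_{q+1}\bigr)$ on $[\lambda_q,\infty)$. Using the projector onto this subspace and the standard randomized SVD construction with $\mtx{\Omega}_1=\mtx{U}_1^{\top}\mtx{\Omega}$ (full rank a.s.) and $\mtx{\Omega}_2=\mtx{U}_2^{\top}\mtx{\Omega}$, one gets
\begin{equation*}
\mtx{U}_1^{\top}(\mtx{I}-\mtx{Q}\mtx{Q}^{\top})\mtx{U}_1\preceq \mtx{F}^{\top}\mtx{F},\qquad \mtx{F}=\phi(\mtx{\Lambda}_2)\mtx{\Omega}_2\mtx{\Omega}_1^{\dagger}\phi(\mtx{\Lambda}_1)^{-1},
\end{equation*}
in the spirit of \cite{li-2021-randomized-block}. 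Then $\lambda^{-1}\mtx{\Lambda}_1^{1/2}\mtx{F}^{\top}\mtx{F}\mtx{\Lambda}_1^{1/2}$ has the same nonzero eigenvalues as $\lambda^{-1}\mtx{F}\mtx{\Lambda}_1\mtx{F}^{\top}$, an operator on the tail space. Here $\|\phi(\mtx{\Lambda}_1)^{-1}\mtx{\Lambda}_1\phi(\mtx{\Lambda}_1)^{-1}\|\le \lambda_1\,T_{k-2}^{-2}(\cdot)$, and the factor $\lambda_{q+1}/\lambda_q$ should arise by using $\lambda_i/\phi(\lambda_i)^2$ monotone behaviour. I would first try bounding $\lambda_i\phi(\lambda_i)^{-2}$ with the growth of $T_{k-2}$ beyond $1$ at rate $\gtrsim$ linear in $\lambda_i/\lambda_q$, and balancing this against a $\mtx{\Lambda}_2^{1/2}$ factor pulled out of $\phi(\mtx{\Lambda}_2)$, yielding the term $\frac{\lambda_{q+1}}{\lambda_q}T_{k-2}^{-2}\lambda^{-1}\mtx{\Lambda}_2$ scaled by $\|\mtx{\Omega}_2\|^2\|\mtx{\Omega}_1^\dagger\|^2$.

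\textbf{Step 6: probabilistic bounds.} Finally, the Gaussian bounds, each holding with probability $1-\delta/2$ and combined by a union bound, are
\begin{equation*}
\|\mtx{\Omega}_2\|\le\sqrt{n-q}+\sqrt{n_{\mtx{\Omega}}}+\sqrt{2\log(2/\delta)}
\end{equation*}
and
\begin{equation*}
\|\mtx{\Omega}_1^\dagger\|\le(2/\delta)^{1/(p+1)}\,e\sqrt{n_{\mtx{\Omega}}}/(p+1).
\end{equation*}
Together these give $C$, and monotonicity of $\Trace\log(\mtx{I}+\cdot)$ under the Loewner order finishes the proof.

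\textbf{Main obstacle.} The main obstacle is Step 5: obtaining exactly the factor $\frac{\lambda_{q+1}}{\lambda_q}T_{k-2}^{-2}$ multiplying $\mtx{\Lambda}_2$ (rather than $\lambda_1$). This requires carefully distributing powers of the eigenvalues between $\phi(\mtx{\Lambda}_1)^{-1}$ and $\phi(\mtx{\Lambda}_2)$, possibly using the filter $t\,\phi(t)$ of degree $k-1$ instead.
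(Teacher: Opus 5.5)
\textbf{Verdict: there is a genuine gap in Step 5 as written. The fix you list as a fallback (the filter $t\,\phi(t)$) is the right one, and with it your argument goes through.}

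The paper does not reprove this lemma. It rewrites the error as $\Trace(\log(\mtx{I}+\lambda^{-1}\mtx{A}))-\Trace(\log(\mtx{I}+\lambda^{-1}\mtx{T}_k))$ and invokes \cite[Theorem 3.2]{li-2021-randomized-block}, shifting $T_{k-1}$ to $T_{k-2}$ because of the Krylov-space convention. You are reconstructing that theorem's proof, and Steps 1--4 are sound. Your two uses of $\log\det(\mtx{I}+\mtx{G}+\mtx{H})\le\log\det(\mtx{I}+\mtx{G})+\log\det(\mtx{I}+\mtx{H})$ (for positive semidefinite $\mtx{G},\mtx{H}$) replace the usual Schur-complement and Fischer-inequality steps and lead to the same intermediate quantities.

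The gap is the choice of filter in Step 5. Your $\phi(t)=T_{k-2}\bigl((2t-\lambda_{q+1})/\lambda_{q+1}\bigr)$ satisfies $\phi(0)=\pm 1$. Meanwhile $\mtx{\Lambda}_2$ has at least $m-n$ zero eigenvalues, since $\operatorname{rank}(\mtx{A})\le n$. The range-finder argument therefore only gives $\mtx{F}\mtx{\Lambda}_1\mtx{F}^{\top}\preceq c\,\phi(\mtx{\Lambda}_2)^2$. But $\phi(\mtx{\Lambda}_2)^2$ is not dominated by any multiple of $\mtx{\Lambda}_2$: there is no $\mtx{\Lambda}_2^{1/2}$ factor to extract. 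So no redistribution of eigenvalue powers can produce the claimed term with this filter.

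The remedy should be the argument itself, not a fallback. Take $\psi(t)=t\,T_{k-2}\bigl((2t-\lambda_{q+1})/\lambda_{q+1}\bigr)$.
\begin{itemize}
\item It has degree $k-1$, so $\operatorname{range}(\psi(\mtx{A})\mtx{\Omega})\subseteq\mathcal{K}_k(\mtx{A},\mtx{\Omega})$. This is precisely why $T_{k-2}$ appears in the statement.
\item On $[0,\lambda_{q+1}]$ one has $\psi(t)^2\le t^2\le\lambda_{q+1}t$.
\item For $i\le q$, monotonicity of $T_{k-2}$ on $[1,\infty)$ gives $\lambda_i\psi(\lambda_i)^{-2}=\bigl(\lambda_i T_{k-2}^2(\cdot)\bigr)^{-1}\le\lambda_q^{-1}T_{k-2}^{-2}\bigl(\frac{2\lambda_q-\lambda_{q+1}}{\lambda_{q+1}}\bigr)$.
\end{itemize}
With $\mtx{F}=\psi(\mtx{\Lambda}_2)\mtx{\Omega}_2\mtx{\Omega}_1^{\dagger}\psi(\mtx{\Lambda}_1)^{-1}$ this yields
\begin{equation*}
\mtx{F}\mtx{\Lambda}_1\mtx{F}^{\top}\preceq \frac{\lVert\mtx{\Omega}_1^{\dagger}\rVert_2^2}{\lambda_q T_{k-2}^{2}\bigl(\frac{2\lambda_q-\lambda_{q+1}}{\lambda_{q+1}}\bigr)}\,\psi(\mtx{\Lambda}_2)\mtx{\Omega}_2\mtx{\Omega}_2^{\top}\psi(\mtx{\Lambda}_2)\preceq \lVert\mtx{\Omega}_1^{\dagger}\rVert_2^2\lVert\mtx{\Omega}_2\rVert_2^2\,\frac{\lambda_{q+1}}{\lambda_q}\,T_{k-2}^{-2}\Bigl(\frac{2\lambda_q-\lambda_{q+1}}{\lambda_{q+1}}\Bigr)\mtx{\Lambda}_2,
\end{equation*}
which is exactly the first term of the bound.

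The same factor $t$ is needed in Step 6. There, $\mtx{\Omega}_2=\mtx{U}_2^{\top}\mtx{\Omega}$ is $(m-q)\times n_{\mtx{\Omega}}$, so the Gaussian tail bound you quote gives $\sqrt{m-q}$, not $\sqrt{n-q}$. The stated constant $C$ is justified only because $\psi(0)=0$ annihilates the rows of $\mtx{\Omega}_2$ belonging to zero eigenvalues of $\mtx{A}$. You may then replace $\mtx{\Omega}_2$ by its submatrix of at most $n-q$ rows before taking norms. The bound on $\lVert\mtx{\Omega}_1^{\dagger}\rVert_2$ and the union bound are fine as written.
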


\begin{proof}
  We have
  \begin{equation*}
    \Trace(\log(\lambda \mtx{I} + \mtx{A})) - \Trace(\log(\lambda \mtx{I} + \mtx{W}_k \mtx{T}_k \mtx{W}_k^{\top}))
    = \Trace(\log(\mtx{I} + \lambda^{-1} \mtx{A})) - \Trace(\log(\mtx{I} + \lambda^{-1} \mtx{T}_k ))
  \end{equation*}
  to which we can apply a version\footnote{Due to a different convention for the block Krylov subspace~\cref{equ:block-krylov-space} in \cite{li-2021-randomized-block}, we needed to replace $T_{k-1}$ with $T_{k-2}$ in the bound.} of~\cite[Theorem 3.2]{li-2021-randomized-block} along with the identity $\Trace(\log) = \log(\det)$ to show the claim.
\end{proof}

\subsubsection{With augmentation}
\label{subsubsec:log-trace-approx-aug}

In this section, we show that replacing $\mtx{W}_k$ and $\mtx{T}_k$ with their augmented counterparts $\widehat{\mtx{W}}_k$, the orthonormal basis of $\mathcal{K}_k(\mtx{A}, [\vct{y}, \mtx{\Omega}])$, and $\widehat{\mtx{T}}_k = \widehat{\mtx{W}}_k^{\top} \mtx{A} \widehat{\mtx{W}}_k$, can only improve the accuracy of the approximation~\cref{equ:trace-approximation}. For this purpose, we will make use of the following lemma.

\begin{lemma}
  \label{lem:eigval-order}
  Let $\mtx{A} \in \mathbb{R}^{m \times m}$ be symmetric positive semidefinite. Consider orthonormal bases $\mtx{Q} \in \mathbb{R}^{m \times \ell}$ and $\widehat{\mtx{Q}} \in \mathbb{R}^{m \times \widehat{\ell}}$ with
  $\ell \leq \widehat{\ell} \leq m$ and $\operatorname{range}(\mtx{Q}) \subseteq \operatorname{range}(\widehat{\mtx{Q}})$. Then
  \begin{equation*}
    \lambda_i(\mtx{Q} \mtx{Q}^{\top} \mtx{A} \mtx{Q} \mtx{Q}^{\top})
    \leq \lambda_i(\widehat{\mtx{Q}} \widehat{\mtx{Q}}^{\top} \mtx{A} \widehat{\mtx{Q}} \widehat{\mtx{Q}}^{\top}), \quad i = 1, \dots, m,
  \end{equation*}
  where $\lambda_i(\cdot)$ denotes the $i$th largest eigenvalue of a matrix.
\end{lemma}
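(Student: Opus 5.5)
The plan is to reduce the claim to the Courant--Fischer min--max characterization, after rewriting both matrices as congruences of the positive semidefinite square root $\mtx{A}^{1/2}$. First observe that $\mtx{Q}\mtx{Q}^{\top}\mtx{A}\mtx{Q}\mtx{Q}^{\top} = \mtx{B}\mtx{B}^{\top}$ with $\mtx{B} = \mtx{Q}\mtx{Q}^{\top}\mtx{A}^{1/2}$. Its nonzero eigenvalues therefore coincide with those of
\begin{equation*}
  \mtx{B}^{\top}\mtx{B} = \mtx{A}^{1/2}\mtx{Q}\mtx{Q}^{\top}\mtx{A}^{1/2},
\end{equation*}
and both matrices are $m\times m$ and positive semidefinite, so all $m$ eigenvalues agree. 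The same holds with $\widehat{\mtx{Q}}$ in place of $\mtx{Q}$. It therefore suffices to compare $\mtx{A}^{1/2}\mtx{P}\mtx{A}^{1/2}$ and $\mtx{A}^{1/2}\widehat{\mtx{P}}\mtx{A}^{1/2}$, where $\mtx{P} = \mtx{Q}\mtx{Q}^{\top}$ and $\widehat{\mtx{P}} = \widehat{\mtx{Q}}\widehat{\mtx{Q}}^{\top}$ are orthogonal projectors.

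The key step is the Loewner ordering $\mtx{P} \preceq \widehat{\mtx{P}}$. Since $\operatorname{range}(\mtx{Q}) \subseteq \operatorname{range}(\widehat{\mtx{Q}})$, we have $\widehat{\mtx{P}}\mtx{P} = \mtx{P} = \mtx{P}\widehat{\mtx{P}}$. Hence $\widehat{\mtx{P}} - \mtx{P}$ is symmetric and idempotent, because
\begin{equation*}
  (\widehat{\mtx{P}} - \mtx{P})^2 = \widehat{\mtx{P}} - 2\mtx{P} + \mtx{P} = \widehat{\mtx{P}} - \mtx{P}.
\end{equation*}
So $\widehat{\mtx{P}} - \mtx{P}$ is an orthogonal projector and in particular positive semidefinite. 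Congruence preserves the Loewner order, which gives
\begin{equation*}
  \mtx{A}^{1/2}\mtx{P}\mtx{A}^{1/2} \preceq \mtx{A}^{1/2}\widehat{\mtx{P}}\mtx{A}^{1/2}.
\end{equation*}

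Finally, by Courant--Fischer (Weyl's monotonicity theorem), $\mtx{M}\preceq\widehat{\mtx{M}}$ implies $\lambda_i(\mtx{M})\le\lambda_i(\widehat{\mtx{M}})$ for all $i$. Combined with the spectral identification from the first step, this yields the claim.

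The only delicate point is the first step. The matrices $\mtx{Q}\mtx{Q}^{\top}\mtx{A}\mtx{Q}\mtx{Q}^{\top}$ themselves are not obviously Loewner-ordered, since $\widehat{\mtx{P}}\mtx{A}\widehat{\mtx{P}} - \mtx{P}\mtx{A}\mtx{P}$ can be indefinite. This is why I pass through the $\mtx{B}\mtx{B}^{\top}$ versus $\mtx{B}^{\top}\mtx{B}$ identification, using that $\mtx{A}$ is positive semidefinite so that $\mtx{A}^{1/2}$ exists. That identification needs a little care with multiplicities of zero eigenvalues, but both matrices are square of size $m$, so their full spectra coincide.
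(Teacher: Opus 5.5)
Your proof is correct, but it takes a different route from the paper. The paper works with the compressed matrices directly. By the max--min characterization, $\lambda_i(\mtx{Q}^{\top}\mtx{A}\mtx{Q})$ is a maximum over $i$-dimensional subspaces of $\operatorname{range}(\mtx{Q})$, and these are among the admissible subspaces of $\operatorname{range}(\widehat{\mtx{Q}})$. It then notes that $\mtx{Q}\mtx{Q}^{\top}\mtx{A}\mtx{Q}\mtx{Q}^{\top}$ has the spectrum of $\mtx{Q}^{\top}\mtx{A}\mtx{Q}$ padded with $m-\ell$ zeros. You instead use $\mtx{B}\mtx{B}^{\top}$ versus $\mtx{B}^{\top}\mtx{B}$ to move the projector between two factors of $\mtx{A}^{1/2}$, show $\mtx{P}\preceq\widehat{\mtx{P}}$, and conclude with Weyl monotonicity.

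The paper's argument is shorter, needs no matrix square root, and speaks directly about the small matrices $\mtx{W}_k^{\top}\mtx{A}\mtx{W}_k$ used in \Cref{lem:log-trace-augmented}. However, it uses semidefiniteness only tacitly, in the step where the padded zeros must be the smallest eigenvalues. That fact is what identifies $\lambda_i$ of the padded matrices with those of the compressions, and for $i>\ell$ it is what gives left side $=0\le\lambda_i$ of the right side. The claim is false without it, e.g.\ for $\mtx{A}=-\mtx{I}$ and $\ell<\widehat{\ell}$.

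Your route treats all $m$ indices uniformly, since the two $m\times m$ matrices have identical full spectra including zeros. It also makes the role of semidefiniteness explicit through the existence of $\mtx{A}^{1/2}$, and yields the Loewner inequality $\mtx{A}^{1/2}\mtx{P}\mtx{A}^{1/2}\preceq\mtx{A}^{1/2}\widehat{\mtx{P}}\mtx{A}^{1/2}$ as an intermediate result. Your observation that $\widehat{\mtx{P}}\mtx{A}\widehat{\mtx{P}}-\mtx{P}\mtx{A}\mtx{P}$ can be indefinite is correct, and it is exactly why the detour through $\mtx{B}^{\top}\mtx{B}$ is needed.
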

\begin{proof}
  The minimax characterization of eigenvalues~\cite[Theorem 4.2.6]{horn-1985-matrix-analysis} immediately gives
  \begin{equation*}
    \lambda_i(\mtx{Q}^{\top} \mtx{A} \mtx{Q})
    = \max_{\substack{\operatorname{dim}(\mathcal{W}) = i \\ \mathcal{W} \subseteq \operatorname{range}(\mtx{Q})}} \min_{\vct{x} \in \mathcal{W}}~\vct{x}^{\top} \mtx{A} \vct{x}
    \leq \max_{\substack{\operatorname{dim}(\mathcal{W}) = i \\ \mathcal{W} \subseteq \operatorname{range}(\widehat{\mtx{Q}})}} \min_{\vct{x} \in \mathcal{W}}~\vct{x}^{\top} \mtx{A} \vct{x}
    = \lambda_i(\widehat{\mtx{Q}}^{\top} \mtx{A} \widehat{\mtx{Q}}).
  \end{equation*}
  This completes the proof because the eigenvalues of $\mtx{Q} \mtx{Q}^{\top} \mtx{A} \mtx{Q} \mtx{Q}^{\top}$ are the
eigenvalues of  $\mtx{Q}^{\top} \mtx{A} \mtx{Q}$ appended with $m-\ell$ zero eigenvalues (and an analogous statement holds for the eigenvalues of $\widehat{\mtx{Q}} \widehat{\mtx{Q}}^{\top} \mtx{A} \widehat{\mtx{Q}} \widehat{\mtx{Q}}^{\top}$).
\end{proof}

\begin{lemma}[Augmentation does not harm]
  \label{lem:log-trace-augmented}
  Using the notation introduced above, it holds that
  \begin{equation*}
    \Trace(\log(\lambda \mtx{I} + \mtx{W}_k \mtx{T}_k \mtx{W}_k^{\top} ))
    \leq \Trace(\log(\lambda \mtx{I} + \widehat{\mtx{W}}_k \widehat{\mtx{T}}_k \widehat{\mtx{W}}_k^{\top}))
    \leq \Trace(\log(\lambda \mtx{I} + \mtx{A})).
  \end{equation*}
  provided that $k ( n_{\mtx{\Omega}}+1) \leq m$.
\end{lemma}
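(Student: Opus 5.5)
Both inequalities reduce to comparing eigenvalues, because $\Trace(\log(\lambda \mtx{I} + \mtx{B})) = \sum_{i=1}^m \log(\lambda + \lambda_i(\mtx{B}))$ for any symmetric $\mtx{B}$ with $\lambda + \lambda_i(\mtx{B}) > 0$. The map $t \mapsto \log(\lambda + t)$ is increasing on $[0,\infty)$. So it suffices to show that the eigenvalues of the three matrices $\mtx{W}_k \mtx{T}_k \mtx{W}_k^{\top}$, $\widehat{\mtx{W}}_k \widehat{\mtx{T}}_k \widehat{\mtx{W}}_k^{\top}$ and $\mtx{A}$ are ordered index by index. All three are symmetric positive semidefinite, so every logarithm is well defined for $\lambda > 0$.

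For the first inequality, note that $\mtx{W}_k \mtx{T}_k \mtx{W}_k^{\top} = \mtx{W}_k \mtx{W}_k^{\top} \mtx{A} \mtx{W}_k \mtx{W}_k^{\top}$, and similarly for the hatted quantities. Since $\mathcal{K}_k(\mtx{A}, \mtx{\Omega}) \subseteq \mathcal{K}_k(\mtx{A}, [\vct{y}, \mtx{\Omega}])$, we have $\operatorname{range}(\mtx{W}_k) \subseteq \operatorname{range}(\widehat{\mtx{W}}_k)$. \Cref{lem:eigval-order} with $\mtx{Q} = \mtx{W}_k$ and $\widehat{\mtx{Q}} = \widehat{\mtx{W}}_k$ then gives $\lambda_i(\mtx{W}_k \mtx{T}_k \mtx{W}_k^{\top}) \leq \lambda_i(\widehat{\mtx{W}}_k \widehat{\mtx{T}}_k \widehat{\mtx{W}}_k^{\top})$ for all $i$. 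Summing $\log(\lambda + \cdot)$ over $i$ yields the first inequality. The hypothesis $k(n_{\mtx{\Omega}}+1) \leq m$ guarantees that the basis sizes satisfy $\ell \leq \widehat{\ell} \leq m$, as \Cref{lem:eigval-order} requires. If block-size reduction occurs, the dimensions are only smaller, so the hypothesis still holds.

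For the second inequality, apply \Cref{lem:eigval-order} again with $\mtx{Q} = \widehat{\mtx{W}}_k$ and $\widehat{\mtx{Q}}$ an orthonormal basis of all of $\mathbb{R}^m$, for instance $\mtx{I}$. Then $\widehat{\mtx{Q}} \widehat{\mtx{Q}}^{\top} \mtx{A} \widehat{\mtx{Q}} \widehat{\mtx{Q}}^{\top} = \mtx{A}$, which gives $\lambda_i(\widehat{\mtx{W}}_k \widehat{\mtx{T}}_k \widehat{\mtx{W}}_k^{\top}) \leq \lambda_i(\mtx{A})$. Alternatively, this follows from Cauchy interlacing for the compression $\widehat{\mtx{T}}_k$, together with the fact that the padded zero eigenvalues are at most the trailing eigenvalues of the positive semidefinite $\mtx{A}$. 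The same monotonicity argument then finishes the proof.

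I expect no real obstacle. The only point needing care is the bookkeeping of eigenvalue counts: the matrices $\mtx{Q}\mtx{Q}^{\top}\mtx{A}\mtx{Q}\mtx{Q}^{\top}$ are $m \times m$, with $m - \ell$ padded zeros. The trace of the logarithm therefore includes the terms $(m-\ell)\log\lambda$, which matches the formula in \Cref{subsec:pml-computation}. This padding is exactly why the eigenvalue comparison must be made for all $i = 1, \dots, m$, as \Cref{lem:eigval-order} provides.
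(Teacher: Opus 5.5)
Your proposal is correct and follows the paper's proof essentially verbatim. Like the paper, you apply \Cref{lem:eigval-order} along the chain $\operatorname{range}(\mtx{W}_k)\subseteq\operatorname{range}(\widehat{\mtx{W}}_k)\subseteq\operatorname{range}(\mtx{I})$ and then sum the increasing function $\log(\lambda+\cdot)$ over all $m$ eigenvalues, including the padded zeros. (You correctly sum up to $m$, whereas the paper's displayed sums run to $n$, a typo; also, $\ell\le\widehat{\ell}\le m$ already follows from the range inclusion and orthonormality, so the hypothesis $k(n_{\mtx{\Omega}}+1)\le m$ only serves to justify full-size Lanczos bases, not this step.)
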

\begin{proof}
Using $\operatorname{range}(\mtx{W}_k) \subseteq \operatorname{range}(\widehat{\mtx{W}}_k) \subseteq \operatorname{range}(\mtx{I})$
together with~\Cref{lem:eigval-order}, and the monotonicity of the function $x \mapsto \log(\lambda + x)$, we obtain the result:
  \begin{align*}
    & \Trace(\log(\lambda \mtx{I} + \mtx{W}_k \mtx{W}_k^{\top} \mtx{A} \mtx{W}_k \mtx{W}_k^{\top})) \\
    =&  \sum_{i=1}^{n} \log(\lambda + \lambda_i(\mtx{W}_k \mtx{W}_k^{\top} \mtx{A} \mtx{W}_k \mtx{W}_k^{\top}))
    \leq \sum_{i=1}^{n} \log(\lambda + \lambda_i(\widehat{\mtx{W}}_k \widehat{\mtx{W}}_k^{\top} \mtx{A} \widehat{\mtx{W}}_k \widehat{\mtx{W}}_k^{\top})) \\
    = & \Trace(\log(\lambda \mtx{I} + \widehat{\mtx{W}}_k \widehat{\mtx{W}}_k^{\top} \mtx{A} \widehat{\mtx{W}}_k \widehat{\mtx{W}}_k^{\top}))
    \leq  \sum_{i=1}^{n} \log(\lambda + \lambda_i(\mtx{I}^{\top} \mtx{A} \mtx{I})) \\
    = & \Trace(\log(\lambda \mtx{I} + \mtx{A})).
  \end{align*}

\end{proof}

\subsubsection{With residual trace estimation}

The trace estimator for the positive semidefinite residual $\mtx{R}(\lambda) = \log(\lambda \mtx{I} + \mtx{A}) - \log(\lambda \mtx{I} + {\mtx{W}}_k {\mtx{T}}_k {\mtx{W}}_k^{\top})$ described in~\Cref{subsec:residual-trace} is standard and satisfies a bound of the form
\begin{equation*}
  \left| \Trace(\mtx{R}(\lambda)) - \frac{1}{n_{\mtx{\Psi}}} \sum_{i=1}^{n_{\mtx{\Psi}}} \vct{\psi}_i^{\top} \mtx{R}(\lambda) \vct{\psi}_i \right| \leq C \sqrt{\frac{\log(1/\delta)}{n_{\mtx{\Psi}}}} \lVert \mtx{R}(\lambda) \rVert _F
  \label{equ:hutchinson-residual-bound}
\end{equation*}
with probability $1 - \delta$, if $n_{\mtx{\Psi}} > c \log(1/\delta)$ for fixed constants $C$ and $c$~\cite{cortinovis-2022-randomized-trace,meyer-2021-hutch-optimal}. Further, since $\mtx{R}(\lambda)$ is positive semidefinite, monotonicity of the Schatten norms yields
\begin{equation*}
\lVert \mtx{R}(\lambda) \rVert_F \le \Trace(\mtx{R}(\lambda)),
\end{equation*}
to which the discussion from~\Cref{subsubsec:log-trace-approx,subsubsec:log-trace-approx-aug} applies. As a consequence, incorporating the residual trace estimate improves---with high probability---the bound for the trace estimator (\Cref{lem:log-trace-standard}) by a factor proportional to $1/\sqrt{n_{\mtx{\Psi}}}$.

\subsection{Summary} In conclusion, we have shown that both the trace and the quadratic form in the PML criterion~\cref{equ:pml-criterion} can be approximated using a single augmented Krylov subspace, without any loss of accuracy compared to treating the two terms separately. Moreover, for the quadratic form approximation~\cref{equ:iqf-approximation}, the augmentation acts as an implicit form of preconditioning, leading to significantly faster convergence of the approximation~\cref{equ:iqf-approximation} when the singular values of the matrix $\mtx{A}$ have sufficient decay. This is the case for the matrices $\mtx{A}$ defined in~\cref{equ:matrix-definition}, which arise from kernel matrices $\mtx{K}$ with pronounced smoothing properties---a feature characteristic to all the kernels used in our application~\cite{chen-2012-estimation-transfer,andersen-2020-smoothing-splines,chen-2025-fast-kernelbased}.

Additionally, we have shown that the trace approximation~\cref{equ:trace-approximation} can be further refined through the residual trace estimation component, which we empirically found to be essential for achieving accurate approximation.

We give an overview of our results from this section in~\Cref{tab:overview}.

\begin{table}[ht]
  \caption{Overview of the bounds proved in this section.}
  \label{tab:overview}
  \centering
  \renewcommand{\arraystretch}{1.2}
  \begin{tabular}{@{}c|c|c|cc@{}}
  \toprule
  & \multirow{2}{*}{$\mathcal{K}(\mtx{A}, \vct{y})$} & \multirow{2}{*}{$\mathcal{K}(\mtx{A}, \mtx{\Omega})$} & \multicolumn{2}{c}{$\mathcal{K}(\mtx{A}, [\vct{y}, \mtx{\Omega}])$} \\
  Approximation of & & & Basic bound & Improved bound \\
  \midrule
  $\vct{y}^{\top} (\lambda \mtx{I} + \mtx{A})^{-1} \vct{y}$ & \Cref{lem:cg-bound} & --- & \Cref{lem:cg-bound-augmented} & \Cref{thm:implicit-preconditioning} \\
  $\Trace(\log(\lambda \mtx{I} + \mtx{A}))$ & --- & \Cref{lem:log-trace-standard} & \Cref{lem:log-trace-augmented} & \Cref{subsec:residual-trace} \\
  \bottomrule
  \end{tabular}
\end{table}

\section{Numerical experiments}
\label{sec:num}

In this section, we test the performance of the Krylov-augmented algorithm described in~\Cref{sec:kryl} on multiple impulse responses of artificial systems. To this extent, we have extended the GPR-FIRE (Gaussian process regression for finite impulse response estimation) MATLAB package introduced in~\cite{chen-2025-fast-kernelbased}. Our implementations are developed in MATLAB 2024b using the System Identification, Control Systems, and Optimization Toolboxes. Experiments are executed in Ubuntu 24.04 LTS running on a system equipped with a 13th Gen Intel Core i5-1335U CPU (4.6 GHz) and 16 GiB of LPDDR5 RAM.

The examples in this section are randomly generated systems as described in~\cite[Section V.A]{chen-2025-fast-kernelbased}. To generate their output, white Gaussian noise is filtered by a second-order transfer function $G(z) = (1 - a z^{-1})^{-2}$. White Gaussian noise, scaled to achieve a specific signal-to-noise ratio (SNR), is added to the output. Unless otherwise stated, we have $a = 0.2$, $\mathrm{SNR} = 10$, $n = 2 \times 10^3$, and $m = 10^4$.

First, we use the direct algorithm (\Cref{subsec:direct}), the indirect algorithm (\Cref{subsec:indirect}), and the Krylov-augmented algorithm (\Cref{alg:krylov-augmented}) to approximate the PML criterion~\cref{equ:pml-criterion} on a $(50 \times 50)$-grid of parameter pairs $(\beta, \lambda)$ taken logarithmically from $\beta \in [10^{-6}, 10^{-2}]$ and $\lambda \in [10^{-1}, 10^{6}]$. Unless otherwise mentioned, we use the TC kernel from~\cite{chen-2012-estimation-transfer} for these examples. The Krylov-augmented algorithm is run on parameters $n_{\mtx{\Omega}} = 1$, $n_{\mtx{\Psi}} = 3$, and $k = 40$. We time the methods and display the results in~\Cref{fig:likelihood-comparison}. The advantage of cheap evaluations with respect to many values of $\lambda$ becomes apparent in the significantly lower runtimes of the direct algorithm and the Krylov-augmented algorithm as opposed to the indirect algorithm. Further, the approximation produced by the Krylov-augmented algorithm is visibly very similar to the one of the direct algorithm, though computed in much less time.

\begin{figure}[!ht]
  \centering
    \begin{subfigure}[t]{0.5\textwidth}
      \centering
      \input{plots/likelihood_pml_direct.pgf}
      \caption{Direct algorithm (\Cref{subsec:direct}).}
      \label{fig:likelihood-direct}
    \end{subfigure}%
    \begin{subfigure}[t]{0.5\textwidth}
      \centering
      \input{plots/likelihood_pml_indirect.pgf}
      \caption{Indirect algorithm (\Cref{subsec:indirect}).}
      \label{fig:likelihood-indirect}
    \end{subfigure}

    \begin{subfigure}[t]{0.5\textwidth}
      \centering
      \input{plots/likelihood_pml_krylov.pgf}
      \caption{Krylov-augmented (\Cref{alg:krylov-augmented}).}
      \label{fig:likelihood-krylov}
    \end{subfigure}%
    \begin{subfigure}[b]{0.5\textwidth}
      \hspace*{15pt}
      \vspace*{20pt}
      \centering
      \renewcommand{\arraystretch}{1.2}
      \begin{tabular}{@{}lc@{}}
      \toprule
       & runtime (s)\\
      \midrule
      Direct algorithm & $86.96$ \\
      Indirect algorithm & $937.68$ \\
      \Cref{alg:krylov-augmented} & $11.91$ \\
      \bottomrule
      \end{tabular}
      \begin{tabular}{@{}lcc@{}}
      \toprule
       & global $\lambda^{\ast}$ & global $\beta^{\ast}$ \\
      \midrule
      Direct algorithm & $37.28$ & $2.68 \times 10^{-3}$ \\
      Indirect algorithm  & $37.28$ & $2.68 \times 10^{-3}$ \\
      \Cref{alg:krylov-augmented}  & $37.28$ & $2.22 \times 10^{-3}$ \\
      \bottomrule
      \end{tabular}
      \caption{Runtimes and minima of the algorithms.}
      \label{tab:likelihood-runtime}
    \end{subfigure}
    \caption{Approximation of the PML criterion $\psi_{\mathrm{PML}}(\lambda, \beta)$. For each value of $\beta$, the dots ($\boldsymbol{\cdot}$) identify the locations $\lambda^{\ast}$ of the minima with respect to $\lambda$. The cross ($\boldsymbol{\times}$) marks the global minimum value of $\psi_{\mathrm{PML}}$. In (d), the time each method took to compute the $2'500$ evaluations of $\psi_{\mathrm{PML}}$ is given.}
    \label{fig:likelihood-comparison}
\end{figure}

For~\Cref{fig:comparison-large}, we generate 20 different systems with $m=10^4$ observations as they are described above. We use a Bayesian optimizer limited to 40 evaluations of the PML criterion~\cref{equ:pml-criterion}. The PML criterion is approximated using one of the three proposed methods at a time. The quality of the approximated FIRs is measured with a metric called \emph{fit}, introduced in the System Identification Toolbox~\cite{ljung-1995-system-identification}. The fit is computed based on the root mean squared error (RMSE) of the estimated impulse response from the true impulse response. A fit of 100 is an exact match while a fit of 0 means that  the error is as large as the RMSE of the impulse response from its mean. We visualize the fits in a box-plot in~\Cref{fig:comparison-large-fits}. The runtimes of the different optimization procedures are visualized in~\Cref{fig:comparison-large-times}. Given the high number of observations, the direct algorithm cannot compete with the indirect method and Krylov method due to its inherently higher complexity.

\begin{figure}[!ht]
    \begin{subfigure}[b]{0.49\textwidth}
      \input{plots/comparison_large_fits.pgf}
      \caption{Model fit of the methods.}
      \label{fig:comparison-large-fits}
    \end{subfigure}
    \begin{subfigure}[b]{0.49\textwidth}
      \input{plots/comparison_large_times.pgf}
      \caption{Runtime of the methods.}
      \label{fig:comparison-large-times}
    \end{subfigure}
    \caption{Box-plots of the model fit and runtime of 20 randomly generated systems with $m = 10^4$ observations and estimated FIRs of order $n = 2 \times 10^3$ with Bayesian optimization through PML objective evaluations based on the different methods.}
    \label{fig:comparison-large}
\end{figure}

In~\Cref{fig:comparison-large-snr-fits}, we demonstrate that our method remains robust under low signal-to-noise ratios (SNR) when estimating impulse responses via~\Cref{alg:krylov-augmented} for PML objective evaluations. In~\Cref{fig:comparison-large-kernels-fits} we demonstrate that our method also works for different choices of kernels $\mtx{K}(\beta)$, particularly the TC kernel as above, the DC kernel~\cite{chen-2012-estimation-transfer}, and the SS kernels, which are less commonly used in current practice~\cite{pillonetto-2010-new-kernelbased}.

\begin{figure}[!ht]
    \begin{subfigure}[b]{0.49\textwidth}
      \input{plots/comparison_large_snr_fits.pgf}
      \caption{Model fits for smaller SNR.}
      \label{fig:comparison-large-snr-fits}
    \end{subfigure}
    \begin{subfigure}[b]{0.49\textwidth}
      \input{plots/comparison_large_kernels_fits.pgf}
      \caption{Model fits for different kernels.}
      \label{fig:comparison-large-kernels-fits}
    \end{subfigure}
    \caption{Box-plots of the model fit of 20 randomly generated systems with $m = 10^4$ observations, and estimated FIRs of order $n = 2 \times 10^3$ with Bayesian optimization through objective evaluations with~\Cref{alg:krylov-augmented}.}
    \label{fig:comparison-large-snr}
\end{figure}

\section{Conclusion and outlook}
\label{sec:con}

We have introduced and analyzed a novel method for kernel-based regularized FIR estimation. This approach approximates both the quadratic form and the log-determinant appearing in the log-likelihood parameter optimization objective at the same time. To achieve this, it builds an augmented block Krylov subspace, which improves the accuracy of both approximations and reduces the number of matrix loads---noticeably lowering computational cost. Moreover, thanks to the shift-invariance of the Krylov subspace, the optimization objective can be evaluated for many values of the regularization parameter at little additional cost, which speeds up the optimization process significantly.

In numerical experiments, we demonstrate that our Krylov-based method achieves comparable results to existing methods in less time.

In future work, we aim at making the Krylov-augmented algorithm adaptive, such that the choice of parameters is no longer delegated to the user and only as many iterations of the Lanczos method are performed as necessary. Further, one could explore how the Krylov-augmented approach can be accelerated by incorporating gradient information into the Bayesian optimization process, similarly to the developments in~\cite{chung-2025-efficient-hyperparameter}. So-called \emph{gradient-enhanced} Bayesian optimization has the potential to land at a minimum of $\psi_{\mathrm{PML}}$ in fewer function evaluations than when the optimizer has no access to gradient information.

\backmatter

\bmhead{Acknowledgements} M.A. is supported by the Novo Nordisk Foundation (no. NNF20OC0061894). T.C. is supported by the National Natural Science Foundation of China (no. 62273287) and the Shenzhen Science and Technology Innovation Council (no. JCYJ20220530143418040).

\bibliography{sn-bibliography}

\end{document}

%% file: plots/matvec_runtimes.pgf
\begingroup%
\makeatletter%
\begin{pgfpicture}%
\pgfpathrectangle{\pgfpointorigin}{\pgfqpoint{5.003309in}{2.909691in}}%
\pgfusepath{use as bounding box, clip}%
\begin{pgfscope}%
\pgfsetbuttcap%
\pgfsetmiterjoin%
\definecolor{currentfill}{rgb}{1.000000,1.000000,1.000000}%
\pgfsetfillcolor{currentfill}%
\pgfsetlinewidth{0.000000pt}%
\definecolor{currentstroke}{rgb}{1.000000,1.000000,1.000000}%
\pgfsetstrokecolor{currentstroke}%
\pgfsetdash{}{0pt}%
\pgfpathmoveto{\pgfqpoint{0.000000in}{0.000000in}}%
\pgfpathlineto{\pgfqpoint{5.003309in}{0.000000in}}%
\pgfpathlineto{\pgfqpoint{5.003309in}{2.909691in}}%
\pgfpathlineto{\pgfqpoint{0.000000in}{2.909691in}}%
\pgfpathlineto{\pgfqpoint{0.000000in}{0.000000in}}%
\pgfpathclose%
\pgfusepath{fill}%
\end{pgfscope}%
\begin{pgfscope}%
\pgfsetbuttcap%
\pgfsetmiterjoin%
\definecolor{currentfill}{rgb}{1.000000,1.000000,1.000000}%
\pgfsetfillcolor{currentfill}%
\pgfsetlinewidth{0.000000pt}%
\definecolor{currentstroke}{rgb}{0.000000,0.000000,0.000000}%
\pgfsetstrokecolor{currentstroke}%
\pgfsetstrokeopacity{0.000000}%
\pgfsetdash{}{0pt}%
\pgfpathmoveto{\pgfqpoint{0.569136in}{0.499691in}}%
\pgfpathlineto{\pgfqpoint{4.831636in}{0.499691in}}%
\pgfpathlineto{\pgfqpoint{4.831636in}{2.809691in}}%
\pgfpathlineto{\pgfqpoint{0.569136in}{2.809691in}}%
\pgfpathlineto{\pgfqpoint{0.569136in}{0.499691in}}%
\pgfpathclose%
\pgfusepath{fill}%
\end{pgfscope}%
\begin{pgfscope}%
\pgfpathrectangle{\pgfqpoint{0.569136in}{0.499691in}}{\pgfqpoint{4.262500in}{2.310000in}}%
\pgfusepath{clip}%
\pgfsetbuttcap%
\pgfsetroundjoin%
\definecolor{currentfill}{rgb}{1.000000,0.690196,0.000000}%
\pgfsetfillcolor{currentfill}%
\pgfsetfillopacity{0.250000}%
\pgfsetlinewidth{0.000000pt}%
\definecolor{currentstroke}{rgb}{0.000000,0.000000,0.000000}%
\pgfsetstrokecolor{currentstroke}%
\pgfsetdash{}{0pt}%
\pgfpathmoveto{\pgfqpoint{0.670624in}{1.067116in}}%
\pgfpathlineto{\pgfqpoint{0.670624in}{0.968290in}}%
\pgfpathlineto{\pgfqpoint{1.121683in}{0.767312in}}%
\pgfpathlineto{\pgfqpoint{1.572741in}{1.038187in}}%
\pgfpathlineto{\pgfqpoint{2.023799in}{1.396431in}}%
\pgfpathlineto{\pgfqpoint{2.474857in}{1.199329in}}%
\pgfpathlineto{\pgfqpoint{2.925915in}{1.349171in}}%
\pgfpathlineto{\pgfqpoint{3.376974in}{1.529733in}}%
\pgfpathlineto{\pgfqpoint{3.828032in}{2.263364in}}%
\pgfpathlineto{\pgfqpoint{4.279090in}{1.740170in}}%
\pgfpathlineto{\pgfqpoint{4.730148in}{2.221701in}}%
\pgfpathlineto{\pgfqpoint{4.730148in}{2.617191in}}%
\pgfpathlineto{\pgfqpoint{4.730148in}{2.617191in}}%
\pgfpathlineto{\pgfqpoint{4.279090in}{2.138620in}}%
\pgfpathlineto{\pgfqpoint{3.828032in}{2.432687in}}%
\pgfpathlineto{\pgfqpoint{3.376974in}{1.726005in}}%
\pgfpathlineto{\pgfqpoint{2.925915in}{1.661674in}}%
\pgfpathlineto{\pgfqpoint{2.474857in}{1.432106in}}%
\pgfpathlineto{\pgfqpoint{2.023799in}{1.923508in}}%
\pgfpathlineto{\pgfqpoint{1.572741in}{1.247263in}}%
\pgfpathlineto{\pgfqpoint{1.121683in}{0.794214in}}%
\pgfpathlineto{\pgfqpoint{0.670624in}{1.067116in}}%
\pgfpathlineto{\pgfqpoint{0.670624in}{1.067116in}}%
\pgfpathclose%
\pgfusepath{fill}%
\end{pgfscope}%
\begin{pgfscope}%
\pgfpathrectangle{\pgfqpoint{0.569136in}{0.499691in}}{\pgfqpoint{4.262500in}{2.310000in}}%
\pgfusepath{clip}%
\pgfsetbuttcap%
\pgfsetroundjoin%
\definecolor{currentfill}{rgb}{0.862745,0.149020,0.498039}%
\pgfsetfillcolor{currentfill}%
\pgfsetfillopacity{0.250000}%
\pgfsetlinewidth{0.000000pt}%
\definecolor{currentstroke}{rgb}{0.000000,0.000000,0.000000}%
\pgfsetstrokecolor{currentstroke}%
\pgfsetdash{}{0pt}%
\pgfpathmoveto{\pgfqpoint{0.670624in}{0.879685in}}%
\pgfpathlineto{\pgfqpoint{0.670624in}{0.842017in}}%
\pgfpathlineto{\pgfqpoint{1.121683in}{0.705783in}}%
\pgfpathlineto{\pgfqpoint{1.572741in}{0.956834in}}%
\pgfpathlineto{\pgfqpoint{2.023799in}{1.231039in}}%
\pgfpathlineto{\pgfqpoint{2.474857in}{1.125342in}}%
\pgfpathlineto{\pgfqpoint{2.925915in}{1.256706in}}%
\pgfpathlineto{\pgfqpoint{3.376974in}{1.358540in}}%
\pgfpathlineto{\pgfqpoint{3.828032in}{1.745937in}}%
\pgfpathlineto{\pgfqpoint{4.279090in}{1.631718in}}%
\pgfpathlineto{\pgfqpoint{4.730148in}{1.983522in}}%
\pgfpathlineto{\pgfqpoint{4.730148in}{2.216531in}}%
\pgfpathlineto{\pgfqpoint{4.730148in}{2.216531in}}%
\pgfpathlineto{\pgfqpoint{4.279090in}{1.794280in}}%
\pgfpathlineto{\pgfqpoint{3.828032in}{1.850175in}}%
\pgfpathlineto{\pgfqpoint{3.376974in}{1.442342in}}%
\pgfpathlineto{\pgfqpoint{2.925915in}{1.337269in}}%
\pgfpathlineto{\pgfqpoint{2.474857in}{1.176227in}}%
\pgfpathlineto{\pgfqpoint{2.023799in}{1.300250in}}%
\pgfpathlineto{\pgfqpoint{1.572741in}{1.002485in}}%
\pgfpathlineto{\pgfqpoint{1.121683in}{0.728003in}}%
\pgfpathlineto{\pgfqpoint{0.670624in}{0.879685in}}%
\pgfpathlineto{\pgfqpoint{0.670624in}{0.879685in}}%
\pgfpathclose%
\pgfusepath{fill}%
\end{pgfscope}%
\begin{pgfscope}%
\pgfpathrectangle{\pgfqpoint{0.569136in}{0.499691in}}{\pgfqpoint{4.262500in}{2.310000in}}%
\pgfusepath{clip}%
\pgfsetbuttcap%
\pgfsetroundjoin%
\definecolor{currentfill}{rgb}{0.392157,0.560784,1.000000}%
\pgfsetfillcolor{currentfill}%
\pgfsetfillopacity{0.250000}%
\pgfsetlinewidth{0.000000pt}%
\definecolor{currentstroke}{rgb}{0.000000,0.000000,0.000000}%
\pgfsetstrokecolor{currentstroke}%
\pgfsetdash{}{0pt}%
\pgfpathmoveto{\pgfqpoint{0.670624in}{0.815779in}}%
\pgfpathlineto{\pgfqpoint{0.670624in}{0.786358in}}%
\pgfpathlineto{\pgfqpoint{1.121683in}{0.692191in}}%
\pgfpathlineto{\pgfqpoint{1.572741in}{0.859376in}}%
\pgfpathlineto{\pgfqpoint{2.023799in}{1.080268in}}%
\pgfpathlineto{\pgfqpoint{2.474857in}{0.970540in}}%
\pgfpathlineto{\pgfqpoint{2.925915in}{1.064419in}}%
\pgfpathlineto{\pgfqpoint{3.376974in}{1.124665in}}%
\pgfpathlineto{\pgfqpoint{3.828032in}{1.487639in}}%
\pgfpathlineto{\pgfqpoint{4.279090in}{1.342413in}}%
\pgfpathlineto{\pgfqpoint{4.730148in}{1.610582in}}%
\pgfpathlineto{\pgfqpoint{4.730148in}{1.871557in}}%
\pgfpathlineto{\pgfqpoint{4.730148in}{1.871557in}}%
\pgfpathlineto{\pgfqpoint{4.279090in}{1.525719in}}%
\pgfpathlineto{\pgfqpoint{3.828032in}{1.632989in}}%
\pgfpathlineto{\pgfqpoint{3.376974in}{1.256358in}}%
\pgfpathlineto{\pgfqpoint{2.925915in}{1.185520in}}%
\pgfpathlineto{\pgfqpoint{2.474857in}{1.065817in}}%
\pgfpathlineto{\pgfqpoint{2.023799in}{1.168614in}}%
\pgfpathlineto{\pgfqpoint{1.572741in}{0.917331in}}%
\pgfpathlineto{\pgfqpoint{1.121683in}{0.704349in}}%
\pgfpathlineto{\pgfqpoint{0.670624in}{0.815779in}}%
\pgfpathlineto{\pgfqpoint{0.670624in}{0.815779in}}%
\pgfpathclose%
\pgfusepath{fill}%
\end{pgfscope}%
\begin{pgfscope}%
\pgfpathrectangle{\pgfqpoint{0.569136in}{0.499691in}}{\pgfqpoint{4.262500in}{2.310000in}}%
\pgfusepath{clip}%
\pgfsetrectcap%
\pgfsetroundjoin%
\pgfsetlinewidth{0.250937pt}%
\definecolor{currentstroke}{rgb}{0.000000,0.000000,0.000000}%
\pgfsetstrokecolor{currentstroke}%
\pgfsetstrokeopacity{0.200000}%
\pgfsetdash{}{0pt}%
\pgfpathmoveto{\pgfqpoint{1.121232in}{0.499691in}}%
\pgfpathlineto{\pgfqpoint{1.121232in}{2.809691in}}%
\pgfusepath{stroke}%
\end{pgfscope}%
\begin{pgfscope}%
\pgfsetbuttcap%
\pgfsetroundjoin%
\definecolor{currentfill}{rgb}{0.000000,0.000000,0.000000}%
\pgfsetfillcolor{currentfill}%
\pgfsetlinewidth{0.803000pt}%
\definecolor{currentstroke}{rgb}{0.000000,0.000000,0.000000}%
\pgfsetstrokecolor{currentstroke}%
\pgfsetdash{}{0pt}%
\pgfsys@defobject{currentmarker}{\pgfqpoint{0.000000in}{-0.048611in}}{\pgfqpoint{0.000000in}{0.000000in}}{%
\pgfpathmoveto{\pgfqpoint{0.000000in}{0.000000in}}%
\pgfpathlineto{\pgfqpoint{0.000000in}{-0.048611in}}%
\pgfusepath{stroke,fill}%
}%
\begin{pgfscope}%
\pgfsys@transformshift{1.121232in}{0.499691in}%
\pgfsys@useobject{currentmarker}{}%
\end{pgfscope}%
\end{pgfscope}%
\begin{pgfscope}%
\definecolor{textcolor}{rgb}{0.000000,0.000000,0.000000}%
\pgfsetstrokecolor{textcolor}%
\pgfsetfillcolor{textcolor}%
\pgftext[x=1.121232in,y=0.402469in,,top]{\color{textcolor}{\rmfamily\fontsize{10.000000}{12.000000}\selectfont\catcode`\^=\active\def^{\ifmmode\sp\else\^{}\fi}\catcode`\%=\active\def
\end{pgfscope}%
\begin{pgfscope}%
\pgfpathrectangle{\pgfqpoint{0.569136in}{0.499691in}}{\pgfqpoint{4.262500in}{2.310000in}}%
\pgfusepath{clip}%
\pgfsetrectcap%
\pgfsetroundjoin%
\pgfsetlinewidth{0.250937pt}%
\definecolor{currentstroke}{rgb}{0.000000,0.000000,0.000000}%
\pgfsetstrokecolor{currentstroke}%
\pgfsetstrokeopacity{0.200000}%
\pgfsetdash{}{0pt}%
\pgfpathmoveto{\pgfqpoint{2.023348in}{0.499691in}}%
\pgfpathlineto{\pgfqpoint{2.023348in}{2.809691in}}%
\pgfusepath{stroke}%
\end{pgfscope}%
\begin{pgfscope}%
\pgfsetbuttcap%
\pgfsetroundjoin%
\definecolor{currentfill}{rgb}{0.000000,0.000000,0.000000}%
\pgfsetfillcolor{currentfill}%
\pgfsetlinewidth{0.803000pt}%
\definecolor{currentstroke}{rgb}{0.000000,0.000000,0.000000}%
\pgfsetstrokecolor{currentstroke}%
\pgfsetdash{}{0pt}%
\pgfsys@defobject{currentmarker}{\pgfqpoint{0.000000in}{-0.048611in}}{\pgfqpoint{0.000000in}{0.000000in}}{%
\pgfpathmoveto{\pgfqpoint{0.000000in}{0.000000in}}%
\pgfpathlineto{\pgfqpoint{0.000000in}{-0.048611in}}%
\pgfusepath{stroke,fill}%
}%
\begin{pgfscope}%
\pgfsys@transformshift{2.023348in}{0.499691in}%
\pgfsys@useobject{currentmarker}{}%
\end{pgfscope}%
\end{pgfscope}%
\begin{pgfscope}%
\definecolor{textcolor}{rgb}{0.000000,0.000000,0.000000}%
\pgfsetstrokecolor{textcolor}%
\pgfsetfillcolor{textcolor}%
\pgftext[x=2.023348in,y=0.402469in,,top]{\color{textcolor}{\rmfamily\fontsize{10.000000}{12.000000}\selectfont\catcode`\^=\active\def^{\ifmmode\sp\else\^{}\fi}\catcode`\%=\active\def
\end{pgfscope}%
\begin{pgfscope}%
\pgfpathrectangle{\pgfqpoint{0.569136in}{0.499691in}}{\pgfqpoint{4.262500in}{2.310000in}}%
\pgfusepath{clip}%
\pgfsetrectcap%
\pgfsetroundjoin%
\pgfsetlinewidth{0.250937pt}%
\definecolor{currentstroke}{rgb}{0.000000,0.000000,0.000000}%
\pgfsetstrokecolor{currentstroke}%
\pgfsetstrokeopacity{0.200000}%
\pgfsetdash{}{0pt}%
\pgfpathmoveto{\pgfqpoint{2.925464in}{0.499691in}}%
\pgfpathlineto{\pgfqpoint{2.925464in}{2.809691in}}%
\pgfusepath{stroke}%
\end{pgfscope}%
\begin{pgfscope}%
\pgfsetbuttcap%
\pgfsetroundjoin%
\definecolor{currentfill}{rgb}{0.000000,0.000000,0.000000}%
\pgfsetfillcolor{currentfill}%
\pgfsetlinewidth{0.803000pt}%
\definecolor{currentstroke}{rgb}{0.000000,0.000000,0.000000}%
\pgfsetstrokecolor{currentstroke}%
\pgfsetdash{}{0pt}%
\pgfsys@defobject{currentmarker}{\pgfqpoint{0.000000in}{-0.048611in}}{\pgfqpoint{0.000000in}{0.000000in}}{%
\pgfpathmoveto{\pgfqpoint{0.000000in}{0.000000in}}%
\pgfpathlineto{\pgfqpoint{0.000000in}{-0.048611in}}%
\pgfusepath{stroke,fill}%
}%
\begin{pgfscope}%
\pgfsys@transformshift{2.925464in}{0.499691in}%
\pgfsys@useobject{currentmarker}{}%
\end{pgfscope}%
\end{pgfscope}%
\begin{pgfscope}%
\definecolor{textcolor}{rgb}{0.000000,0.000000,0.000000}%
\pgfsetstrokecolor{textcolor}%
\pgfsetfillcolor{textcolor}%
\pgftext[x=2.925464in,y=0.402469in,,top]{\color{textcolor}{\rmfamily\fontsize{10.000000}{12.000000}\selectfont\catcode`\^=\active\def^{\ifmmode\sp\else\^{}\fi}\catcode`\%=\active\def
\end{pgfscope}%
\begin{pgfscope}%
\pgfpathrectangle{\pgfqpoint{0.569136in}{0.499691in}}{\pgfqpoint{4.262500in}{2.310000in}}%
\pgfusepath{clip}%
\pgfsetrectcap%
\pgfsetroundjoin%
\pgfsetlinewidth{0.250937pt}%
\definecolor{currentstroke}{rgb}{0.000000,0.000000,0.000000}%
\pgfsetstrokecolor{currentstroke}%
\pgfsetstrokeopacity{0.200000}%
\pgfsetdash{}{0pt}%
\pgfpathmoveto{\pgfqpoint{3.827581in}{0.499691in}}%
\pgfpathlineto{\pgfqpoint{3.827581in}{2.809691in}}%
\pgfusepath{stroke}%
\end{pgfscope}%
\begin{pgfscope}%
\pgfsetbuttcap%
\pgfsetroundjoin%
\definecolor{currentfill}{rgb}{0.000000,0.000000,0.000000}%
\pgfsetfillcolor{currentfill}%
\pgfsetlinewidth{0.803000pt}%
\definecolor{currentstroke}{rgb}{0.000000,0.000000,0.000000}%
\pgfsetstrokecolor{currentstroke}%
\pgfsetdash{}{0pt}%
\pgfsys@defobject{currentmarker}{\pgfqpoint{0.000000in}{-0.048611in}}{\pgfqpoint{0.000000in}{0.000000in}}{%
\pgfpathmoveto{\pgfqpoint{0.000000in}{0.000000in}}%
\pgfpathlineto{\pgfqpoint{0.000000in}{-0.048611in}}%
\pgfusepath{stroke,fill}%
}%
\begin{pgfscope}%
\pgfsys@transformshift{3.827581in}{0.499691in}%
\pgfsys@useobject{currentmarker}{}%
\end{pgfscope}%
\end{pgfscope}%
\begin{pgfscope}%
\definecolor{textcolor}{rgb}{0.000000,0.000000,0.000000}%
\pgfsetstrokecolor{textcolor}%
\pgfsetfillcolor{textcolor}%
\pgftext[x=3.827581in,y=0.402469in,,top]{\color{textcolor}{\rmfamily\fontsize{10.000000}{12.000000}\selectfont\catcode`\^=\active\def^{\ifmmode\sp\else\^{}\fi}\catcode`\%=\active\def
\end{pgfscope}%
\begin{pgfscope}%
\pgfpathrectangle{\pgfqpoint{0.569136in}{0.499691in}}{\pgfqpoint{4.262500in}{2.310000in}}%
\pgfusepath{clip}%
\pgfsetrectcap%
\pgfsetroundjoin%
\pgfsetlinewidth{0.250937pt}%
\definecolor{currentstroke}{rgb}{0.000000,0.000000,0.000000}%
\pgfsetstrokecolor{currentstroke}%
\pgfsetstrokeopacity{0.200000}%
\pgfsetdash{}{0pt}%
\pgfpathmoveto{\pgfqpoint{4.729697in}{0.499691in}}%
\pgfpathlineto{\pgfqpoint{4.729697in}{2.809691in}}%
\pgfusepath{stroke}%
\end{pgfscope}%
\begin{pgfscope}%
\pgfsetbuttcap%
\pgfsetroundjoin%
\definecolor{currentfill}{rgb}{0.000000,0.000000,0.000000}%
\pgfsetfillcolor{currentfill}%
\pgfsetlinewidth{0.803000pt}%
\definecolor{currentstroke}{rgb}{0.000000,0.000000,0.000000}%
\pgfsetstrokecolor{currentstroke}%
\pgfsetdash{}{0pt}%
\pgfsys@defobject{currentmarker}{\pgfqpoint{0.000000in}{-0.048611in}}{\pgfqpoint{0.000000in}{0.000000in}}{%
\pgfpathmoveto{\pgfqpoint{0.000000in}{0.000000in}}%
\pgfpathlineto{\pgfqpoint{0.000000in}{-0.048611in}}%
\pgfusepath{stroke,fill}%
}%
\begin{pgfscope}%
\pgfsys@transformshift{4.729697in}{0.499691in}%
\pgfsys@useobject{currentmarker}{}%
\end{pgfscope}%
\end{pgfscope}%
\begin{pgfscope}%
\definecolor{textcolor}{rgb}{0.000000,0.000000,0.000000}%
\pgfsetstrokecolor{textcolor}%
\pgfsetfillcolor{textcolor}%
\pgftext[x=4.729697in,y=0.402469in,,top]{\color{textcolor}{\rmfamily\fontsize{10.000000}{12.000000}\selectfont\catcode`\^=\active\def^{\ifmmode\sp\else\^{}\fi}\catcode`\%=\active\def
\end{pgfscope}%
\begin{pgfscope}%
\definecolor{textcolor}{rgb}{0.000000,0.000000,0.000000}%
\pgfsetstrokecolor{textcolor}%
\pgfsetfillcolor{textcolor}%
\pgftext[x=2.700386in,y=0.223457in,,top]{\color{textcolor}{\rmfamily\fontsize{10.000000}{12.000000}\selectfont\catcode`\^=\active\def^{\ifmmode\sp\else\^{}\fi}\catcode`\%=\active\def
\end{pgfscope}%
\begin{pgfscope}%
\pgfpathrectangle{\pgfqpoint{0.569136in}{0.499691in}}{\pgfqpoint{4.262500in}{2.310000in}}%
\pgfusepath{clip}%
\pgfsetrectcap%
\pgfsetroundjoin%
\pgfsetlinewidth{0.250937pt}%
\definecolor{currentstroke}{rgb}{0.000000,0.000000,0.000000}%
\pgfsetstrokecolor{currentstroke}%
\pgfsetstrokeopacity{0.200000}%
\pgfsetdash{}{0pt}%
\pgfpathmoveto{\pgfqpoint{0.569136in}{0.607438in}}%
\pgfpathlineto{\pgfqpoint{4.831636in}{0.607438in}}%
\pgfusepath{stroke}%
\end{pgfscope}%
\begin{pgfscope}%
\pgfsetbuttcap%
\pgfsetroundjoin%
\definecolor{currentfill}{rgb}{0.000000,0.000000,0.000000}%
\pgfsetfillcolor{currentfill}%
\pgfsetlinewidth{0.803000pt}%
\definecolor{currentstroke}{rgb}{0.000000,0.000000,0.000000}%
\pgfsetstrokecolor{currentstroke}%
\pgfsetdash{}{0pt}%
\pgfsys@defobject{currentmarker}{\pgfqpoint{-0.048611in}{0.000000in}}{\pgfqpoint{-0.000000in}{0.000000in}}{%
\pgfpathmoveto{\pgfqpoint{-0.000000in}{0.000000in}}%
\pgfpathlineto{\pgfqpoint{-0.048611in}{0.000000in}}%
\pgfusepath{stroke,fill}%
}%
\begin{pgfscope}%
\pgfsys@transformshift{0.569136in}{0.607438in}%
\pgfsys@useobject{currentmarker}{}%
\end{pgfscope}%
\end{pgfscope}%
\begin{pgfscope}%
\definecolor{textcolor}{rgb}{0.000000,0.000000,0.000000}%
\pgfsetstrokecolor{textcolor}%
\pgfsetfillcolor{textcolor}%
\pgftext[x=0.294444in, y=0.559213in, left, base]{\color{textcolor}{\rmfamily\fontsize{10.000000}{12.000000}\selectfont\catcode`\^=\active\def^{\ifmmode\sp\else\^{}\fi}\catcode`\%=\active\def
\end{pgfscope}%
\begin{pgfscope}%
\pgfpathrectangle{\pgfqpoint{0.569136in}{0.499691in}}{\pgfqpoint{4.262500in}{2.310000in}}%
\pgfusepath{clip}%
\pgfsetrectcap%
\pgfsetroundjoin%
\pgfsetlinewidth{0.250937pt}%
\definecolor{currentstroke}{rgb}{0.000000,0.000000,0.000000}%
\pgfsetstrokecolor{currentstroke}%
\pgfsetstrokeopacity{0.200000}%
\pgfsetdash{}{0pt}%
\pgfpathmoveto{\pgfqpoint{0.569136in}{0.955202in}}%
\pgfpathlineto{\pgfqpoint{4.831636in}{0.955202in}}%
\pgfusepath{stroke}%
\end{pgfscope}%
\begin{pgfscope}%
\pgfsetbuttcap%
\pgfsetroundjoin%
\definecolor{currentfill}{rgb}{0.000000,0.000000,0.000000}%
\pgfsetfillcolor{currentfill}%
\pgfsetlinewidth{0.803000pt}%
\definecolor{currentstroke}{rgb}{0.000000,0.000000,0.000000}%
\pgfsetstrokecolor{currentstroke}%
\pgfsetdash{}{0pt}%
\pgfsys@defobject{currentmarker}{\pgfqpoint{-0.048611in}{0.000000in}}{\pgfqpoint{-0.000000in}{0.000000in}}{%
\pgfpathmoveto{\pgfqpoint{-0.000000in}{0.000000in}}%
\pgfpathlineto{\pgfqpoint{-0.048611in}{0.000000in}}%
\pgfusepath{stroke,fill}%
}%
\begin{pgfscope}%
\pgfsys@transformshift{0.569136in}{0.955202in}%
\pgfsys@useobject{currentmarker}{}%
\end{pgfscope}%
\end{pgfscope}%
\begin{pgfscope}%
\definecolor{textcolor}{rgb}{0.000000,0.000000,0.000000}%
\pgfsetstrokecolor{textcolor}%
\pgfsetfillcolor{textcolor}%
\pgftext[x=0.294444in, y=0.906977in, left, base]{\color{textcolor}{\rmfamily\fontsize{10.000000}{12.000000}\selectfont\catcode`\^=\active\def^{\ifmmode\sp\else\^{}\fi}\catcode`\%=\active\def
\end{pgfscope}%
\begin{pgfscope}%
\pgfpathrectangle{\pgfqpoint{0.569136in}{0.499691in}}{\pgfqpoint{4.262500in}{2.310000in}}%
\pgfusepath{clip}%
\pgfsetrectcap%
\pgfsetroundjoin%
\pgfsetlinewidth{0.250937pt}%
\definecolor{currentstroke}{rgb}{0.000000,0.000000,0.000000}%
\pgfsetstrokecolor{currentstroke}%
\pgfsetstrokeopacity{0.200000}%
\pgfsetdash{}{0pt}%
\pgfpathmoveto{\pgfqpoint{0.569136in}{1.302966in}}%
\pgfpathlineto{\pgfqpoint{4.831636in}{1.302966in}}%
\pgfusepath{stroke}%
\end{pgfscope}%
\begin{pgfscope}%
\pgfsetbuttcap%
\pgfsetroundjoin%
\definecolor{currentfill}{rgb}{0.000000,0.000000,0.000000}%
\pgfsetfillcolor{currentfill}%
\pgfsetlinewidth{0.803000pt}%
\definecolor{currentstroke}{rgb}{0.000000,0.000000,0.000000}%
\pgfsetstrokecolor{currentstroke}%
\pgfsetdash{}{0pt}%
\pgfsys@defobject{currentmarker}{\pgfqpoint{-0.048611in}{0.000000in}}{\pgfqpoint{-0.000000in}{0.000000in}}{%
\pgfpathmoveto{\pgfqpoint{-0.000000in}{0.000000in}}%
\pgfpathlineto{\pgfqpoint{-0.048611in}{0.000000in}}%
\pgfusepath{stroke,fill}%
}%
\begin{pgfscope}%
\pgfsys@transformshift{0.569136in}{1.302966in}%
\pgfsys@useobject{currentmarker}{}%
\end{pgfscope}%
\end{pgfscope}%
\begin{pgfscope}%
\definecolor{textcolor}{rgb}{0.000000,0.000000,0.000000}%
\pgfsetstrokecolor{textcolor}%
\pgfsetfillcolor{textcolor}%
\pgftext[x=0.294444in, y=1.254741in, left, base]{\color{textcolor}{\rmfamily\fontsize{10.000000}{12.000000}\selectfont\catcode`\^=\active\def^{\ifmmode\sp\else\^{}\fi}\catcode`\%=\active\def
\end{pgfscope}%
\begin{pgfscope}%
\pgfpathrectangle{\pgfqpoint{0.569136in}{0.499691in}}{\pgfqpoint{4.262500in}{2.310000in}}%
\pgfusepath{clip}%
\pgfsetrectcap%
\pgfsetroundjoin%
\pgfsetlinewidth{0.250937pt}%
\definecolor{currentstroke}{rgb}{0.000000,0.000000,0.000000}%
\pgfsetstrokecolor{currentstroke}%
\pgfsetstrokeopacity{0.200000}%
\pgfsetdash{}{0pt}%
\pgfpathmoveto{\pgfqpoint{0.569136in}{1.650731in}}%
\pgfpathlineto{\pgfqpoint{4.831636in}{1.650731in}}%
\pgfusepath{stroke}%
\end{pgfscope}%
\begin{pgfscope}%
\pgfsetbuttcap%
\pgfsetroundjoin%
\definecolor{currentfill}{rgb}{0.000000,0.000000,0.000000}%
\pgfsetfillcolor{currentfill}%
\pgfsetlinewidth{0.803000pt}%
\definecolor{currentstroke}{rgb}{0.000000,0.000000,0.000000}%
\pgfsetstrokecolor{currentstroke}%
\pgfsetdash{}{0pt}%
\pgfsys@defobject{currentmarker}{\pgfqpoint{-0.048611in}{0.000000in}}{\pgfqpoint{-0.000000in}{0.000000in}}{%
\pgfpathmoveto{\pgfqpoint{-0.000000in}{0.000000in}}%
\pgfpathlineto{\pgfqpoint{-0.048611in}{0.000000in}}%
\pgfusepath{stroke,fill}%
}%
\begin{pgfscope}%
\pgfsys@transformshift{0.569136in}{1.650731in}%
\pgfsys@useobject{currentmarker}{}%
\end{pgfscope}%
\end{pgfscope}%
\begin{pgfscope}%
\definecolor{textcolor}{rgb}{0.000000,0.000000,0.000000}%
\pgfsetstrokecolor{textcolor}%
\pgfsetfillcolor{textcolor}%
\pgftext[x=0.294444in, y=1.602505in, left, base]{\color{textcolor}{\rmfamily\fontsize{10.000000}{12.000000}\selectfont\catcode`\^=\active\def^{\ifmmode\sp\else\^{}\fi}\catcode`\%=\active\def
\end{pgfscope}%
\begin{pgfscope}%
\pgfpathrectangle{\pgfqpoint{0.569136in}{0.499691in}}{\pgfqpoint{4.262500in}{2.310000in}}%
\pgfusepath{clip}%
\pgfsetrectcap%
\pgfsetroundjoin%
\pgfsetlinewidth{0.250937pt}%
\definecolor{currentstroke}{rgb}{0.000000,0.000000,0.000000}%
\pgfsetstrokecolor{currentstroke}%
\pgfsetstrokeopacity{0.200000}%
\pgfsetdash{}{0pt}%
\pgfpathmoveto{\pgfqpoint{0.569136in}{1.998495in}}%
\pgfpathlineto{\pgfqpoint{4.831636in}{1.998495in}}%
\pgfusepath{stroke}%
\end{pgfscope}%
\begin{pgfscope}%
\pgfsetbuttcap%
\pgfsetroundjoin%
\definecolor{currentfill}{rgb}{0.000000,0.000000,0.000000}%
\pgfsetfillcolor{currentfill}%
\pgfsetlinewidth{0.803000pt}%
\definecolor{currentstroke}{rgb}{0.000000,0.000000,0.000000}%
\pgfsetstrokecolor{currentstroke}%
\pgfsetdash{}{0pt}%
\pgfsys@defobject{currentmarker}{\pgfqpoint{-0.048611in}{0.000000in}}{\pgfqpoint{-0.000000in}{0.000000in}}{%
\pgfpathmoveto{\pgfqpoint{-0.000000in}{0.000000in}}%
\pgfpathlineto{\pgfqpoint{-0.048611in}{0.000000in}}%
\pgfusepath{stroke,fill}%
}%
\begin{pgfscope}%
\pgfsys@transformshift{0.569136in}{1.998495in}%
\pgfsys@useobject{currentmarker}{}%
\end{pgfscope}%
\end{pgfscope}%
\begin{pgfscope}%
\definecolor{textcolor}{rgb}{0.000000,0.000000,0.000000}%
\pgfsetstrokecolor{textcolor}%
\pgfsetfillcolor{textcolor}%
\pgftext[x=0.294444in, y=1.950270in, left, base]{\color{textcolor}{\rmfamily\fontsize{10.000000}{12.000000}\selectfont\catcode`\^=\active\def^{\ifmmode\sp\else\^{}\fi}\catcode`\%=\active\def
\end{pgfscope}%
\begin{pgfscope}%
\pgfpathrectangle{\pgfqpoint{0.569136in}{0.499691in}}{\pgfqpoint{4.262500in}{2.310000in}}%
\pgfusepath{clip}%
\pgfsetrectcap%
\pgfsetroundjoin%
\pgfsetlinewidth{0.250937pt}%
\definecolor{currentstroke}{rgb}{0.000000,0.000000,0.000000}%
\pgfsetstrokecolor{currentstroke}%
\pgfsetstrokeopacity{0.200000}%
\pgfsetdash{}{0pt}%
\pgfpathmoveto{\pgfqpoint{0.569136in}{2.346259in}}%
\pgfpathlineto{\pgfqpoint{4.831636in}{2.346259in}}%
\pgfusepath{stroke}%
\end{pgfscope}%
\begin{pgfscope}%
\pgfsetbuttcap%
\pgfsetroundjoin%
\definecolor{currentfill}{rgb}{0.000000,0.000000,0.000000}%
\pgfsetfillcolor{currentfill}%
\pgfsetlinewidth{0.803000pt}%
\definecolor{currentstroke}{rgb}{0.000000,0.000000,0.000000}%
\pgfsetstrokecolor{currentstroke}%
\pgfsetdash{}{0pt}%
\pgfsys@defobject{currentmarker}{\pgfqpoint{-0.048611in}{0.000000in}}{\pgfqpoint{-0.000000in}{0.000000in}}{%
\pgfpathmoveto{\pgfqpoint{-0.000000in}{0.000000in}}%
\pgfpathlineto{\pgfqpoint{-0.048611in}{0.000000in}}%
\pgfusepath{stroke,fill}%
}%
\begin{pgfscope}%
\pgfsys@transformshift{0.569136in}{2.346259in}%
\pgfsys@useobject{currentmarker}{}%
\end{pgfscope}%
\end{pgfscope}%
\begin{pgfscope}%
\definecolor{textcolor}{rgb}{0.000000,0.000000,0.000000}%
\pgfsetstrokecolor{textcolor}%
\pgfsetfillcolor{textcolor}%
\pgftext[x=0.294444in, y=2.298034in, left, base]{\color{textcolor}{\rmfamily\fontsize{10.000000}{12.000000}\selectfont\catcode`\^=\active\def^{\ifmmode\sp\else\^{}\fi}\catcode`\%=\active\def
\end{pgfscope}%
\begin{pgfscope}%
\pgfpathrectangle{\pgfqpoint{0.569136in}{0.499691in}}{\pgfqpoint{4.262500in}{2.310000in}}%
\pgfusepath{clip}%
\pgfsetrectcap%
\pgfsetroundjoin%
\pgfsetlinewidth{0.250937pt}%
\definecolor{currentstroke}{rgb}{0.000000,0.000000,0.000000}%
\pgfsetstrokecolor{currentstroke}%
\pgfsetstrokeopacity{0.200000}%
\pgfsetdash{}{0pt}%
\pgfpathmoveto{\pgfqpoint{0.569136in}{2.694024in}}%
\pgfpathlineto{\pgfqpoint{4.831636in}{2.694024in}}%
\pgfusepath{stroke}%
\end{pgfscope}%
\begin{pgfscope}%
\pgfsetbuttcap%
\pgfsetroundjoin%
\definecolor{currentfill}{rgb}{0.000000,0.000000,0.000000}%
\pgfsetfillcolor{currentfill}%
\pgfsetlinewidth{0.803000pt}%
\definecolor{currentstroke}{rgb}{0.000000,0.000000,0.000000}%
\pgfsetstrokecolor{currentstroke}%
\pgfsetdash{}{0pt}%
\pgfsys@defobject{currentmarker}{\pgfqpoint{-0.048611in}{0.000000in}}{\pgfqpoint{-0.000000in}{0.000000in}}{%
\pgfpathmoveto{\pgfqpoint{-0.000000in}{0.000000in}}%
\pgfpathlineto{\pgfqpoint{-0.048611in}{0.000000in}}%
\pgfusepath{stroke,fill}%
}%
\begin{pgfscope}%
\pgfsys@transformshift{0.569136in}{2.694024in}%
\pgfsys@useobject{currentmarker}{}%
\end{pgfscope}%
\end{pgfscope}%
\begin{pgfscope}%
\definecolor{textcolor}{rgb}{0.000000,0.000000,0.000000}%
\pgfsetstrokecolor{textcolor}%
\pgfsetfillcolor{textcolor}%
\pgftext[x=0.294444in, y=2.645798in, left, base]{\color{textcolor}{\rmfamily\fontsize{10.000000}{12.000000}\selectfont\catcode`\^=\active\def^{\ifmmode\sp\else\^{}\fi}\catcode`\%=\active\def
\end{pgfscope}%
\begin{pgfscope}%
\definecolor{textcolor}{rgb}{0.000000,0.000000,0.000000}%
\pgfsetstrokecolor{textcolor}%
\pgfsetfillcolor{textcolor}%
\pgftext[x=0.238889in,y=1.654691in,,bottom,rotate=90.000000]{\color{textcolor}{\rmfamily\fontsize{10.000000}{12.000000}\selectfont\catcode`\^=\active\def^{\ifmmode\sp\else\^{}\fi}\catcode`\%=\active\def
\end{pgfscope}%
\begin{pgfscope}%
\pgfpathrectangle{\pgfqpoint{0.569136in}{0.499691in}}{\pgfqpoint{4.262500in}{2.310000in}}%
\pgfusepath{clip}%
\pgfsetrectcap%
\pgfsetroundjoin%
\pgfsetlinewidth{1.505625pt}%
\definecolor{currentstroke}{rgb}{1.000000,0.690196,0.000000}%
\pgfsetstrokecolor{currentstroke}%
\pgfsetdash{}{0pt}%
\pgfpathmoveto{\pgfqpoint{0.670624in}{1.017703in}}%
\pgfpathlineto{\pgfqpoint{1.121683in}{0.780763in}}%
\pgfpathlineto{\pgfqpoint{1.572741in}{1.142725in}}%
\pgfpathlineto{\pgfqpoint{2.023799in}{1.659969in}}%
\pgfpathlineto{\pgfqpoint{2.474857in}{1.315717in}}%
\pgfpathlineto{\pgfqpoint{2.925915in}{1.505422in}}%
\pgfpathlineto{\pgfqpoint{3.376974in}{1.627869in}}%
\pgfpathlineto{\pgfqpoint{3.828032in}{2.348025in}}%
\pgfpathlineto{\pgfqpoint{4.279090in}{1.939395in}}%
\pgfpathlineto{\pgfqpoint{4.730148in}{2.419446in}}%
\pgfusepath{stroke}%
\end{pgfscope}%
\begin{pgfscope}%
\pgfpathrectangle{\pgfqpoint{0.569136in}{0.499691in}}{\pgfqpoint{4.262500in}{2.310000in}}%
\pgfusepath{clip}%
\pgfsetbuttcap%
\pgfsetmiterjoin%
\definecolor{currentfill}{rgb}{1.000000,0.690196,0.000000}%
\pgfsetfillcolor{currentfill}%
\pgfsetlinewidth{1.003750pt}%
\definecolor{currentstroke}{rgb}{1.000000,0.690196,0.000000}%
\pgfsetstrokecolor{currentstroke}%
\pgfsetdash{}{0pt}%
\pgfsys@defobject{currentmarker}{\pgfqpoint{-0.035355in}{-0.058926in}}{\pgfqpoint{0.035355in}{0.058926in}}{%
\pgfpathmoveto{\pgfqpoint{-0.000000in}{-0.058926in}}%
\pgfpathlineto{\pgfqpoint{0.035355in}{0.000000in}}%
\pgfpathlineto{\pgfqpoint{0.000000in}{0.058926in}}%
\pgfpathlineto{\pgfqpoint{-0.035355in}{0.000000in}}%
\pgfpathlineto{\pgfqpoint{-0.000000in}{-0.058926in}}%
\pgfpathclose%
\pgfusepath{stroke,fill}%
}%
\begin{pgfscope}%
\pgfsys@transformshift{0.670624in}{1.017703in}%
\pgfsys@useobject{currentmarker}{}%
\end{pgfscope}%
\begin{pgfscope}%
\pgfsys@transformshift{1.121683in}{0.780763in}%
\pgfsys@useobject{currentmarker}{}%
\end{pgfscope}%
\begin{pgfscope}%
\pgfsys@transformshift{1.572741in}{1.142725in}%
\pgfsys@useobject{currentmarker}{}%
\end{pgfscope}%
\begin{pgfscope}%
\pgfsys@transformshift{2.023799in}{1.659969in}%
\pgfsys@useobject{currentmarker}{}%
\end{pgfscope}%
\begin{pgfscope}%
\pgfsys@transformshift{2.474857in}{1.315717in}%
\pgfsys@useobject{currentmarker}{}%
\end{pgfscope}%
\begin{pgfscope}%
\pgfsys@transformshift{2.925915in}{1.505422in}%
\pgfsys@useobject{currentmarker}{}%
\end{pgfscope}%
\begin{pgfscope}%
\pgfsys@transformshift{3.376974in}{1.627869in}%
\pgfsys@useobject{currentmarker}{}%
\end{pgfscope}%
\begin{pgfscope}%
\pgfsys@transformshift{3.828032in}{2.348025in}%
\pgfsys@useobject{currentmarker}{}%
\end{pgfscope}%
\begin{pgfscope}%
\pgfsys@transformshift{4.279090in}{1.939395in}%
\pgfsys@useobject{currentmarker}{}%
\end{pgfscope}%
\begin{pgfscope}%
\pgfsys@transformshift{4.730148in}{2.419446in}%
\pgfsys@useobject{currentmarker}{}%
\end{pgfscope}%
\end{pgfscope}%
\begin{pgfscope}%
\pgfpathrectangle{\pgfqpoint{0.569136in}{0.499691in}}{\pgfqpoint{4.262500in}{2.310000in}}%
\pgfusepath{clip}%
\pgfsetrectcap%
\pgfsetroundjoin%
\pgfsetlinewidth{1.505625pt}%
\definecolor{currentstroke}{rgb}{0.862745,0.149020,0.498039}%
\pgfsetstrokecolor{currentstroke}%
\pgfsetdash{}{0pt}%
\pgfpathmoveto{\pgfqpoint{0.670624in}{0.860851in}}%
\pgfpathlineto{\pgfqpoint{1.121683in}{0.716893in}}%
\pgfpathlineto{\pgfqpoint{1.572741in}{0.979660in}}%
\pgfpathlineto{\pgfqpoint{2.023799in}{1.265644in}}%
\pgfpathlineto{\pgfqpoint{2.474857in}{1.150784in}}%
\pgfpathlineto{\pgfqpoint{2.925915in}{1.296988in}}%
\pgfpathlineto{\pgfqpoint{3.376974in}{1.400441in}}%
\pgfpathlineto{\pgfqpoint{3.828032in}{1.798056in}}%
\pgfpathlineto{\pgfqpoint{4.279090in}{1.712999in}}%
\pgfpathlineto{\pgfqpoint{4.730148in}{2.100027in}}%
\pgfusepath{stroke}%
\end{pgfscope}%
\begin{pgfscope}%
\pgfpathrectangle{\pgfqpoint{0.569136in}{0.499691in}}{\pgfqpoint{4.262500in}{2.310000in}}%
\pgfusepath{clip}%
\pgfsetbuttcap%
\pgfsetmiterjoin%
\definecolor{currentfill}{rgb}{0.862745,0.149020,0.498039}%
\pgfsetfillcolor{currentfill}%
\pgfsetlinewidth{1.003750pt}%
\definecolor{currentstroke}{rgb}{0.862745,0.149020,0.498039}%
\pgfsetstrokecolor{currentstroke}%
\pgfsetdash{}{0pt}%
\pgfsys@defobject{currentmarker}{\pgfqpoint{-0.039627in}{-0.033709in}}{\pgfqpoint{0.039627in}{0.041667in}}{%
\pgfpathmoveto{\pgfqpoint{0.000000in}{0.041667in}}%
\pgfpathlineto{\pgfqpoint{-0.039627in}{0.012876in}}%
\pgfpathlineto{\pgfqpoint{-0.024491in}{-0.033709in}}%
\pgfpathlineto{\pgfqpoint{0.024491in}{-0.033709in}}%
\pgfpathlineto{\pgfqpoint{0.039627in}{0.012876in}}%
\pgfpathlineto{\pgfqpoint{0.000000in}{0.041667in}}%
\pgfpathclose%
\pgfusepath{stroke,fill}%
}%
\begin{pgfscope}%
\pgfsys@transformshift{0.670624in}{0.860851in}%
\pgfsys@useobject{currentmarker}{}%
\end{pgfscope}%
\begin{pgfscope}%
\pgfsys@transformshift{1.121683in}{0.716893in}%
\pgfsys@useobject{currentmarker}{}%
\end{pgfscope}%
\begin{pgfscope}%
\pgfsys@transformshift{1.572741in}{0.979660in}%
\pgfsys@useobject{currentmarker}{}%
\end{pgfscope}%
\begin{pgfscope}%
\pgfsys@transformshift{2.023799in}{1.265644in}%
\pgfsys@useobject{currentmarker}{}%
\end{pgfscope}%
\begin{pgfscope}%
\pgfsys@transformshift{2.474857in}{1.150784in}%
\pgfsys@useobject{currentmarker}{}%
\end{pgfscope}%
\begin{pgfscope}%
\pgfsys@transformshift{2.925915in}{1.296988in}%
\pgfsys@useobject{currentmarker}{}%
\end{pgfscope}%
\begin{pgfscope}%
\pgfsys@transformshift{3.376974in}{1.400441in}%
\pgfsys@useobject{currentmarker}{}%
\end{pgfscope}%
\begin{pgfscope}%
\pgfsys@transformshift{3.828032in}{1.798056in}%
\pgfsys@useobject{currentmarker}{}%
\end{pgfscope}%
\begin{pgfscope}%
\pgfsys@transformshift{4.279090in}{1.712999in}%
\pgfsys@useobject{currentmarker}{}%
\end{pgfscope}%
\begin{pgfscope}%
\pgfsys@transformshift{4.730148in}{2.100027in}%
\pgfsys@useobject{currentmarker}{}%
\end{pgfscope}%
\end{pgfscope}%
\begin{pgfscope}%
\pgfpathrectangle{\pgfqpoint{0.569136in}{0.499691in}}{\pgfqpoint{4.262500in}{2.310000in}}%
\pgfusepath{clip}%
\pgfsetrectcap%
\pgfsetroundjoin%
\pgfsetlinewidth{1.505625pt}%
\definecolor{currentstroke}{rgb}{0.392157,0.560784,1.000000}%
\pgfsetstrokecolor{currentstroke}%
\pgfsetdash{}{0pt}%
\pgfpathmoveto{\pgfqpoint{0.670624in}{0.801069in}}%
\pgfpathlineto{\pgfqpoint{1.121683in}{0.698270in}}%
\pgfpathlineto{\pgfqpoint{1.572741in}{0.888353in}}%
\pgfpathlineto{\pgfqpoint{2.023799in}{1.124441in}}%
\pgfpathlineto{\pgfqpoint{2.474857in}{1.018179in}}%
\pgfpathlineto{\pgfqpoint{2.925915in}{1.124970in}}%
\pgfpathlineto{\pgfqpoint{3.376974in}{1.190512in}}%
\pgfpathlineto{\pgfqpoint{3.828032in}{1.560314in}}%
\pgfpathlineto{\pgfqpoint{4.279090in}{1.434066in}}%
\pgfpathlineto{\pgfqpoint{4.730148in}{1.741069in}}%
\pgfusepath{stroke}%
\end{pgfscope}%
\begin{pgfscope}%
\pgfpathrectangle{\pgfqpoint{0.569136in}{0.499691in}}{\pgfqpoint{4.262500in}{2.310000in}}%
\pgfusepath{clip}%
\pgfsetbuttcap%
\pgfsetmiterjoin%
\definecolor{currentfill}{rgb}{0.392157,0.560784,1.000000}%
\pgfsetfillcolor{currentfill}%
\pgfsetlinewidth{1.003750pt}%
\definecolor{currentstroke}{rgb}{0.392157,0.560784,1.000000}%
\pgfsetstrokecolor{currentstroke}%
\pgfsetdash{}{0pt}%
\pgfsys@defobject{currentmarker}{\pgfqpoint{-0.041667in}{-0.041667in}}{\pgfqpoint{0.041667in}{0.041667in}}{%
\pgfpathmoveto{\pgfqpoint{-0.041667in}{-0.041667in}}%
\pgfpathlineto{\pgfqpoint{0.041667in}{-0.041667in}}%
\pgfpathlineto{\pgfqpoint{0.041667in}{0.041667in}}%
\pgfpathlineto{\pgfqpoint{-0.041667in}{0.041667in}}%
\pgfpathlineto{\pgfqpoint{-0.041667in}{-0.041667in}}%
\pgfpathclose%
\pgfusepath{stroke,fill}%
}%
\begin{pgfscope}%
\pgfsys@transformshift{0.670624in}{0.801069in}%
\pgfsys@useobject{currentmarker}{}%
\end{pgfscope}%
\begin{pgfscope}%
\pgfsys@transformshift{1.121683in}{0.698270in}%
\pgfsys@useobject{currentmarker}{}%
\end{pgfscope}%
\begin{pgfscope}%
\pgfsys@transformshift{1.572741in}{0.888353in}%
\pgfsys@useobject{currentmarker}{}%
\end{pgfscope}%
\begin{pgfscope}%
\pgfsys@transformshift{2.023799in}{1.124441in}%
\pgfsys@useobject{currentmarker}{}%
\end{pgfscope}%
\begin{pgfscope}%
\pgfsys@transformshift{2.474857in}{1.018179in}%
\pgfsys@useobject{currentmarker}{}%
\end{pgfscope}%
\begin{pgfscope}%
\pgfsys@transformshift{2.925915in}{1.124970in}%
\pgfsys@useobject{currentmarker}{}%
\end{pgfscope}%
\begin{pgfscope}%
\pgfsys@transformshift{3.376974in}{1.190512in}%
\pgfsys@useobject{currentmarker}{}%
\end{pgfscope}%
\begin{pgfscope}%
\pgfsys@transformshift{3.828032in}{1.560314in}%
\pgfsys@useobject{currentmarker}{}%
\end{pgfscope}%
\begin{pgfscope}%
\pgfsys@transformshift{4.279090in}{1.434066in}%
\pgfsys@useobject{currentmarker}{}%
\end{pgfscope}%
\begin{pgfscope}%
\pgfsys@transformshift{4.730148in}{1.741069in}%
\pgfsys@useobject{currentmarker}{}%
\end{pgfscope}%
\end{pgfscope}%
\begin{pgfscope}%
\pgfsetrectcap%
\pgfsetmiterjoin%
\pgfsetlinewidth{0.803000pt}%
\definecolor{currentstroke}{rgb}{0.000000,0.000000,0.000000}%
\pgfsetstrokecolor{currentstroke}%
\pgfsetdash{}{0pt}%
\pgfpathmoveto{\pgfqpoint{0.569136in}{0.499691in}}%
\pgfpathlineto{\pgfqpoint{0.569136in}{2.809691in}}%
\pgfusepath{stroke}%
\end{pgfscope}%
\begin{pgfscope}%
\pgfsetrectcap%
\pgfsetmiterjoin%
\pgfsetlinewidth{0.803000pt}%
\definecolor{currentstroke}{rgb}{0.000000,0.000000,0.000000}%
\pgfsetstrokecolor{currentstroke}%
\pgfsetdash{}{0pt}%
\pgfpathmoveto{\pgfqpoint{4.831636in}{0.499691in}}%
\pgfpathlineto{\pgfqpoint{4.831636in}{2.809691in}}%
\pgfusepath{stroke}%
\end{pgfscope}%
\begin{pgfscope}%
\pgfsetrectcap%
\pgfsetmiterjoin%
\pgfsetlinewidth{0.803000pt}%
\definecolor{currentstroke}{rgb}{0.000000,0.000000,0.000000}%
\pgfsetstrokecolor{currentstroke}%
\pgfsetdash{}{0pt}%
\pgfpathmoveto{\pgfqpoint{0.569136in}{0.499691in}}%
\pgfpathlineto{\pgfqpoint{4.831636in}{0.499691in}}%
\pgfusepath{stroke}%
\end{pgfscope}%
\begin{pgfscope}%
\pgfsetrectcap%
\pgfsetmiterjoin%
\pgfsetlinewidth{0.803000pt}%
\definecolor{currentstroke}{rgb}{0.000000,0.000000,0.000000}%
\pgfsetstrokecolor{currentstroke}%
\pgfsetdash{}{0pt}%
\pgfpathmoveto{\pgfqpoint{0.569136in}{2.809691in}}%
\pgfpathlineto{\pgfqpoint{4.831636in}{2.809691in}}%
\pgfusepath{stroke}%
\end{pgfscope}%
\begin{pgfscope}%
\pgfsetbuttcap%
\pgfsetmiterjoin%
\definecolor{currentfill}{rgb}{1.000000,1.000000,1.000000}%
\pgfsetfillcolor{currentfill}%
\pgfsetfillopacity{0.800000}%
\pgfsetlinewidth{1.003750pt}%
\definecolor{currentstroke}{rgb}{0.800000,0.800000,0.800000}%
\pgfsetstrokecolor{currentstroke}%
\pgfsetstrokeopacity{0.800000}%
\pgfsetdash{}{0pt}%
\pgfpathmoveto{\pgfqpoint{0.638581in}{2.117562in}}%
\pgfpathlineto{\pgfqpoint{1.452816in}{2.117562in}}%
\pgfpathlineto{\pgfqpoint{1.452816in}{2.740247in}}%
\pgfpathlineto{\pgfqpoint{0.638581in}{2.740247in}}%
\pgfpathlineto{\pgfqpoint{0.638581in}{2.117562in}}%
\pgfpathclose%
\pgfusepath{stroke,fill}%
\end{pgfscope}%
\begin{pgfscope}%
\pgfsetrectcap%
\pgfsetroundjoin%
\pgfsetlinewidth{1.505625pt}%
\definecolor{currentstroke}{rgb}{1.000000,0.690196,0.000000}%
\pgfsetstrokecolor{currentstroke}%
\pgfsetdash{}{0pt}%
\pgfpathmoveto{\pgfqpoint{0.694136in}{2.636080in}}%
\pgfpathlineto{\pgfqpoint{0.833025in}{2.636080in}}%
\pgfpathlineto{\pgfqpoint{0.971914in}{2.636080in}}%
\pgfusepath{stroke}%
\end{pgfscope}%
\begin{pgfscope}%
\pgfsetbuttcap%
\pgfsetmiterjoin%
\definecolor{currentfill}{rgb}{1.000000,0.690196,0.000000}%
\pgfsetfillcolor{currentfill}%
\pgfsetlinewidth{1.003750pt}%
\definecolor{currentstroke}{rgb}{1.000000,0.690196,0.000000}%
\pgfsetstrokecolor{currentstroke}%
\pgfsetdash{}{0pt}%
\pgfsys@defobject{currentmarker}{\pgfqpoint{-0.026517in}{-0.044194in}}{\pgfqpoint{0.026517in}{0.044194in}}{%
\pgfpathmoveto{\pgfqpoint{-0.000000in}{-0.044194in}}%
\pgfpathlineto{\pgfqpoint{0.026517in}{0.000000in}}%
\pgfpathlineto{\pgfqpoint{0.000000in}{0.044194in}}%
\pgfpathlineto{\pgfqpoint{-0.026517in}{0.000000in}}%
\pgfpathlineto{\pgfqpoint{-0.000000in}{-0.044194in}}%
\pgfpathclose%
\pgfusepath{stroke,fill}%
}%
\begin{pgfscope}%
\pgfsys@transformshift{0.833025in}{2.636080in}%
\pgfsys@useobject{currentmarker}{}%
\end{pgfscope}%
\end{pgfscope}%
\begin{pgfscope}%
\definecolor{textcolor}{rgb}{0.000000,0.000000,0.000000}%
\pgfsetstrokecolor{textcolor}%
\pgfsetfillcolor{textcolor}%
\pgftext[x=1.083025in,y=2.587469in,left,base]{\color{textcolor}{\rmfamily\fontsize{10.000000}{12.000000}\selectfont\catcode`\^=\active\def^{\ifmmode\sp\else\^{}\fi}\catcode`\%=\active\def
\end{pgfscope}%
\begin{pgfscope}%
\pgfsetrectcap%
\pgfsetroundjoin%
\pgfsetlinewidth{1.505625pt}%
\definecolor{currentstroke}{rgb}{0.862745,0.149020,0.498039}%
\pgfsetstrokecolor{currentstroke}%
\pgfsetdash{}{0pt}%
\pgfpathmoveto{\pgfqpoint{0.694136in}{2.442407in}}%
\pgfpathlineto{\pgfqpoint{0.833025in}{2.442407in}}%
\pgfpathlineto{\pgfqpoint{0.971914in}{2.442407in}}%
\pgfusepath{stroke}%
\end{pgfscope}%
\begin{pgfscope}%
\pgfsetbuttcap%
\pgfsetmiterjoin%
\definecolor{currentfill}{rgb}{0.862745,0.149020,0.498039}%
\pgfsetfillcolor{currentfill}%
\pgfsetlinewidth{1.003750pt}%
\definecolor{currentstroke}{rgb}{0.862745,0.149020,0.498039}%
\pgfsetstrokecolor{currentstroke}%
\pgfsetdash{}{0pt}%
\pgfsys@defobject{currentmarker}{\pgfqpoint{-0.029721in}{-0.025282in}}{\pgfqpoint{0.029721in}{0.031250in}}{%
\pgfpathmoveto{\pgfqpoint{0.000000in}{0.031250in}}%
\pgfpathlineto{\pgfqpoint{-0.029721in}{0.009657in}}%
\pgfpathlineto{\pgfqpoint{-0.018368in}{-0.025282in}}%
\pgfpathlineto{\pgfqpoint{0.018368in}{-0.025282in}}%
\pgfpathlineto{\pgfqpoint{0.029721in}{0.009657in}}%
\pgfpathlineto{\pgfqpoint{0.000000in}{0.031250in}}%
\pgfpathclose%
\pgfusepath{stroke,fill}%
}%
\begin{pgfscope}%
\pgfsys@transformshift{0.833025in}{2.442407in}%
\pgfsys@useobject{currentmarker}{}%
\end{pgfscope}%
\end{pgfscope}%
\begin{pgfscope}%
\definecolor{textcolor}{rgb}{0.000000,0.000000,0.000000}%
\pgfsetstrokecolor{textcolor}%
\pgfsetfillcolor{textcolor}%
\pgftext[x=1.083025in,y=2.393796in,left,base]{\color{textcolor}{\rmfamily\fontsize{10.000000}{12.000000}\selectfont\catcode`\^=\active\def^{\ifmmode\sp\else\^{}\fi}\catcode`\%=\active\def
\end{pgfscope}%
\begin{pgfscope}%
\pgfsetrectcap%
\pgfsetroundjoin%
\pgfsetlinewidth{1.505625pt}%
\definecolor{currentstroke}{rgb}{0.392157,0.560784,1.000000}%
\pgfsetstrokecolor{currentstroke}%
\pgfsetdash{}{0pt}%
\pgfpathmoveto{\pgfqpoint{0.694136in}{2.248734in}}%
\pgfpathlineto{\pgfqpoint{0.833025in}{2.248734in}}%
\pgfpathlineto{\pgfqpoint{0.971914in}{2.248734in}}%
\pgfusepath{stroke}%
\end{pgfscope}%
\begin{pgfscope}%
\pgfsetbuttcap%
\pgfsetmiterjoin%
\definecolor{currentfill}{rgb}{0.392157,0.560784,1.000000}%
\pgfsetfillcolor{currentfill}%
\pgfsetlinewidth{1.003750pt}%
\definecolor{currentstroke}{rgb}{0.392157,0.560784,1.000000}%
\pgfsetstrokecolor{currentstroke}%
\pgfsetdash{}{0pt}%
\pgfsys@defobject{currentmarker}{\pgfqpoint{-0.031250in}{-0.031250in}}{\pgfqpoint{0.031250in}{0.031250in}}{%
\pgfpathmoveto{\pgfqpoint{-0.031250in}{-0.031250in}}%
\pgfpathlineto{\pgfqpoint{0.031250in}{-0.031250in}}%
\pgfpathlineto{\pgfqpoint{0.031250in}{0.031250in}}%
\pgfpathlineto{\pgfqpoint{-0.031250in}{0.031250in}}%
\pgfpathlineto{\pgfqpoint{-0.031250in}{-0.031250in}}%
\pgfpathclose%
\pgfusepath{stroke,fill}%
}%
\begin{pgfscope}%
\pgfsys@transformshift{0.833025in}{2.248734in}%
\pgfsys@useobject{currentmarker}{}%
\end{pgfscope}%
\end{pgfscope}%
\begin{pgfscope}%
\definecolor{textcolor}{rgb}{0.000000,0.000000,0.000000}%
\pgfsetstrokecolor{textcolor}%
\pgfsetfillcolor{textcolor}%
\pgftext[x=1.083025in,y=2.200123in,left,base]{\color{textcolor}{\rmfamily\fontsize{10.000000}{12.000000}\selectfont\catcode`\^=\active\def^{\ifmmode\sp\else\^{}\fi}\catcode`\%=\active\def
\end{pgfscope}%
\end{pgfpicture}%
\makeatother%
\endgroup%

%% file: plots/comparison_large_fits.pgf
\begingroup%
\makeatletter%
\begin{pgfpicture}%
\pgfpathrectangle{\pgfpointorigin}{\pgfqpoint{2.462068in}{2.195404in}}%
\pgfusepath{use as bounding box, clip}%
\begin{pgfscope}%
\pgfsetbuttcap%
\pgfsetmiterjoin%
\definecolor{currentfill}{rgb}{1.000000,1.000000,1.000000}%
\pgfsetfillcolor{currentfill}%
\pgfsetlinewidth{0.000000pt}%
\definecolor{currentstroke}{rgb}{1.000000,1.000000,1.000000}%
\pgfsetstrokecolor{currentstroke}%
\pgfsetdash{}{0pt}%
\pgfpathmoveto{\pgfqpoint{0.000000in}{0.000000in}}%
\pgfpathlineto{\pgfqpoint{2.462068in}{0.000000in}}%
\pgfpathlineto{\pgfqpoint{2.462068in}{2.195404in}}%
\pgfpathlineto{\pgfqpoint{0.000000in}{2.195404in}}%
\pgfpathlineto{\pgfqpoint{0.000000in}{0.000000in}}%
\pgfpathclose%
\pgfusepath{fill}%
\end{pgfscope}%
\begin{pgfscope}%
\pgfsetbuttcap%
\pgfsetmiterjoin%
\definecolor{currentfill}{rgb}{1.000000,1.000000,1.000000}%
\pgfsetfillcolor{currentfill}%
\pgfsetlinewidth{0.000000pt}%
\definecolor{currentstroke}{rgb}{0.000000,0.000000,0.000000}%
\pgfsetstrokecolor{currentstroke}%
\pgfsetstrokeopacity{0.000000}%
\pgfsetdash{}{0pt}%
\pgfpathmoveto{\pgfqpoint{0.504568in}{0.240679in}}%
\pgfpathlineto{\pgfqpoint{2.442068in}{0.240679in}}%
\pgfpathlineto{\pgfqpoint{2.442068in}{2.127179in}}%
\pgfpathlineto{\pgfqpoint{0.504568in}{2.127179in}}%
\pgfpathlineto{\pgfqpoint{0.504568in}{0.240679in}}%
\pgfpathclose%
\pgfusepath{fill}%
\end{pgfscope}%
\begin{pgfscope}%
\pgfpathrectangle{\pgfqpoint{0.504568in}{0.240679in}}{\pgfqpoint{1.937500in}{1.886500in}}%
\pgfusepath{clip}%
\pgfsetrectcap%
\pgfsetroundjoin%
\pgfsetlinewidth{0.250937pt}%
\definecolor{currentstroke}{rgb}{0.000000,0.000000,0.000000}%
\pgfsetstrokecolor{currentstroke}%
\pgfsetstrokeopacity{0.200000}%
\pgfsetdash{}{0pt}%
\pgfpathmoveto{\pgfqpoint{0.827485in}{0.240679in}}%
\pgfpathlineto{\pgfqpoint{0.827485in}{2.127179in}}%
\pgfusepath{stroke}%
\end{pgfscope}%
\begin{pgfscope}%
\pgfsetbuttcap%
\pgfsetroundjoin%
\definecolor{currentfill}{rgb}{0.000000,0.000000,0.000000}%
\pgfsetfillcolor{currentfill}%
\pgfsetlinewidth{0.803000pt}%
\definecolor{currentstroke}{rgb}{0.000000,0.000000,0.000000}%
\pgfsetstrokecolor{currentstroke}%
\pgfsetdash{}{0pt}%
\pgfsys@defobject{currentmarker}{\pgfqpoint{0.000000in}{-0.048611in}}{\pgfqpoint{0.000000in}{0.000000in}}{%
\pgfpathmoveto{\pgfqpoint{0.000000in}{0.000000in}}%
\pgfpathlineto{\pgfqpoint{0.000000in}{-0.048611in}}%
\pgfusepath{stroke,fill}%
}%
\begin{pgfscope}%
\pgfsys@transformshift{0.827485in}{0.240679in}%
\pgfsys@useobject{currentmarker}{}%
\end{pgfscope}%
\end{pgfscope}%
\begin{pgfscope}%
\definecolor{textcolor}{rgb}{0.000000,0.000000,0.000000}%
\pgfsetstrokecolor{textcolor}%
\pgfsetfillcolor{textcolor}%
\pgftext[x=0.827485in,y=0.143457in,,top]{\color{textcolor}{\rmfamily\fontsize{10.000000}{12.000000}\selectfont\catcode`\^=\active\def^{\ifmmode\sp\else\^{}\fi}\catcode`\%=\active\def
\end{pgfscope}%
\begin{pgfscope}%
\pgfpathrectangle{\pgfqpoint{0.504568in}{0.240679in}}{\pgfqpoint{1.937500in}{1.886500in}}%
\pgfusepath{clip}%
\pgfsetrectcap%
\pgfsetroundjoin%
\pgfsetlinewidth{0.250937pt}%
\definecolor{currentstroke}{rgb}{0.000000,0.000000,0.000000}%
\pgfsetstrokecolor{currentstroke}%
\pgfsetstrokeopacity{0.200000}%
\pgfsetdash{}{0pt}%
\pgfpathmoveto{\pgfqpoint{1.473318in}{0.240679in}}%
\pgfpathlineto{\pgfqpoint{1.473318in}{2.127179in}}%
\pgfusepath{stroke}%
\end{pgfscope}%
\begin{pgfscope}%
\pgfsetbuttcap%
\pgfsetroundjoin%
\definecolor{currentfill}{rgb}{0.000000,0.000000,0.000000}%
\pgfsetfillcolor{currentfill}%
\pgfsetlinewidth{0.803000pt}%
\definecolor{currentstroke}{rgb}{0.000000,0.000000,0.000000}%
\pgfsetstrokecolor{currentstroke}%
\pgfsetdash{}{0pt}%
\pgfsys@defobject{currentmarker}{\pgfqpoint{0.000000in}{-0.048611in}}{\pgfqpoint{0.000000in}{0.000000in}}{%
\pgfpathmoveto{\pgfqpoint{0.000000in}{0.000000in}}%
\pgfpathlineto{\pgfqpoint{0.000000in}{-0.048611in}}%
\pgfusepath{stroke,fill}%
}%
\begin{pgfscope}%
\pgfsys@transformshift{1.473318in}{0.240679in}%
\pgfsys@useobject{currentmarker}{}%
\end{pgfscope}%
\end{pgfscope}%
\begin{pgfscope}%
\definecolor{textcolor}{rgb}{0.000000,0.000000,0.000000}%
\pgfsetstrokecolor{textcolor}%
\pgfsetfillcolor{textcolor}%
\pgftext[x=1.473318in,y=0.143457in,,top]{\color{textcolor}{\rmfamily\fontsize{10.000000}{12.000000}\selectfont\catcode`\^=\active\def^{\ifmmode\sp\else\^{}\fi}\catcode`\%=\active\def
\end{pgfscope}%
\begin{pgfscope}%
\pgfpathrectangle{\pgfqpoint{0.504568in}{0.240679in}}{\pgfqpoint{1.937500in}{1.886500in}}%
\pgfusepath{clip}%
\pgfsetrectcap%
\pgfsetroundjoin%
\pgfsetlinewidth{0.250937pt}%
\definecolor{currentstroke}{rgb}{0.000000,0.000000,0.000000}%
\pgfsetstrokecolor{currentstroke}%
\pgfsetstrokeopacity{0.200000}%
\pgfsetdash{}{0pt}%
\pgfpathmoveto{\pgfqpoint{2.119152in}{0.240679in}}%
\pgfpathlineto{\pgfqpoint{2.119152in}{2.127179in}}%
\pgfusepath{stroke}%
\end{pgfscope}%
\begin{pgfscope}%
\pgfsetbuttcap%
\pgfsetroundjoin%
\definecolor{currentfill}{rgb}{0.000000,0.000000,0.000000}%
\pgfsetfillcolor{currentfill}%
\pgfsetlinewidth{0.803000pt}%
\definecolor{currentstroke}{rgb}{0.000000,0.000000,0.000000}%
\pgfsetstrokecolor{currentstroke}%
\pgfsetdash{}{0pt}%
\pgfsys@defobject{currentmarker}{\pgfqpoint{0.000000in}{-0.048611in}}{\pgfqpoint{0.000000in}{0.000000in}}{%
\pgfpathmoveto{\pgfqpoint{0.000000in}{0.000000in}}%
\pgfpathlineto{\pgfqpoint{0.000000in}{-0.048611in}}%
\pgfusepath{stroke,fill}%
}%
\begin{pgfscope}%
\pgfsys@transformshift{2.119152in}{0.240679in}%
\pgfsys@useobject{currentmarker}{}%
\end{pgfscope}%
\end{pgfscope}%
\begin{pgfscope}%
\definecolor{textcolor}{rgb}{0.000000,0.000000,0.000000}%
\pgfsetstrokecolor{textcolor}%
\pgfsetfillcolor{textcolor}%
\pgftext[x=2.119152in,y=0.143457in,,top]{\color{textcolor}{\rmfamily\fontsize{10.000000}{12.000000}\selectfont\catcode`\^=\active\def^{\ifmmode\sp\else\^{}\fi}\catcode`\%=\active\def
\end{pgfscope}%
\begin{pgfscope}%
\pgfpathrectangle{\pgfqpoint{0.504568in}{0.240679in}}{\pgfqpoint{1.937500in}{1.886500in}}%
\pgfusepath{clip}%
\pgfsetrectcap%
\pgfsetroundjoin%
\pgfsetlinewidth{0.250937pt}%
\definecolor{currentstroke}{rgb}{0.000000,0.000000,0.000000}%
\pgfsetstrokecolor{currentstroke}%
\pgfsetstrokeopacity{0.200000}%
\pgfsetdash{}{0pt}%
\pgfpathmoveto{\pgfqpoint{0.504568in}{0.240679in}}%
\pgfpathlineto{\pgfqpoint{2.442068in}{0.240679in}}%
\pgfusepath{stroke}%
\end{pgfscope}%
\begin{pgfscope}%
\pgfsetbuttcap%
\pgfsetroundjoin%
\definecolor{currentfill}{rgb}{0.000000,0.000000,0.000000}%
\pgfsetfillcolor{currentfill}%
\pgfsetlinewidth{0.803000pt}%
\definecolor{currentstroke}{rgb}{0.000000,0.000000,0.000000}%
\pgfsetstrokecolor{currentstroke}%
\pgfsetdash{}{0pt}%
\pgfsys@defobject{currentmarker}{\pgfqpoint{-0.048611in}{0.000000in}}{\pgfqpoint{-0.000000in}{0.000000in}}{%
\pgfpathmoveto{\pgfqpoint{-0.000000in}{0.000000in}}%
\pgfpathlineto{\pgfqpoint{-0.048611in}{0.000000in}}%
\pgfusepath{stroke,fill}%
}%
\begin{pgfscope}%
\pgfsys@transformshift{0.504568in}{0.240679in}%
\pgfsys@useobject{currentmarker}{}%
\end{pgfscope}%
\end{pgfscope}%
\begin{pgfscope}%
\definecolor{textcolor}{rgb}{0.000000,0.000000,0.000000}%
\pgfsetstrokecolor{textcolor}%
\pgfsetfillcolor{textcolor}%
\pgftext[x=0.268457in, y=0.192454in, left, base]{\color{textcolor}{\rmfamily\fontsize{10.000000}{12.000000}\selectfont\catcode`\^=\active\def^{\ifmmode\sp\else\^{}\fi}\catcode`\%=\active\def
\end{pgfscope}%
\begin{pgfscope}%
\pgfpathrectangle{\pgfqpoint{0.504568in}{0.240679in}}{\pgfqpoint{1.937500in}{1.886500in}}%
\pgfusepath{clip}%
\pgfsetrectcap%
\pgfsetroundjoin%
\pgfsetlinewidth{0.250937pt}%
\definecolor{currentstroke}{rgb}{0.000000,0.000000,0.000000}%
\pgfsetstrokecolor{currentstroke}%
\pgfsetstrokeopacity{0.200000}%
\pgfsetdash{}{0pt}%
\pgfpathmoveto{\pgfqpoint{0.504568in}{0.555096in}}%
\pgfpathlineto{\pgfqpoint{2.442068in}{0.555096in}}%
\pgfusepath{stroke}%
\end{pgfscope}%
\begin{pgfscope}%
\pgfsetbuttcap%
\pgfsetroundjoin%
\definecolor{currentfill}{rgb}{0.000000,0.000000,0.000000}%
\pgfsetfillcolor{currentfill}%
\pgfsetlinewidth{0.803000pt}%
\definecolor{currentstroke}{rgb}{0.000000,0.000000,0.000000}%
\pgfsetstrokecolor{currentstroke}%
\pgfsetdash{}{0pt}%
\pgfsys@defobject{currentmarker}{\pgfqpoint{-0.048611in}{0.000000in}}{\pgfqpoint{-0.000000in}{0.000000in}}{%
\pgfpathmoveto{\pgfqpoint{-0.000000in}{0.000000in}}%
\pgfpathlineto{\pgfqpoint{-0.048611in}{0.000000in}}%
\pgfusepath{stroke,fill}%
}%
\begin{pgfscope}%
\pgfsys@transformshift{0.504568in}{0.555096in}%
\pgfsys@useobject{currentmarker}{}%
\end{pgfscope}%
\end{pgfscope}%
\begin{pgfscope}%
\definecolor{textcolor}{rgb}{0.000000,0.000000,0.000000}%
\pgfsetstrokecolor{textcolor}%
\pgfsetfillcolor{textcolor}%
\pgftext[x=0.268457in, y=0.506870in, left, base]{\color{textcolor}{\rmfamily\fontsize{10.000000}{12.000000}\selectfont\catcode`\^=\active\def^{\ifmmode\sp\else\^{}\fi}\catcode`\%=\active\def
\end{pgfscope}%
\begin{pgfscope}%
\pgfpathrectangle{\pgfqpoint{0.504568in}{0.240679in}}{\pgfqpoint{1.937500in}{1.886500in}}%
\pgfusepath{clip}%
\pgfsetrectcap%
\pgfsetroundjoin%
\pgfsetlinewidth{0.250937pt}%
\definecolor{currentstroke}{rgb}{0.000000,0.000000,0.000000}%
\pgfsetstrokecolor{currentstroke}%
\pgfsetstrokeopacity{0.200000}%
\pgfsetdash{}{0pt}%
\pgfpathmoveto{\pgfqpoint{0.504568in}{0.869512in}}%
\pgfpathlineto{\pgfqpoint{2.442068in}{0.869512in}}%
\pgfusepath{stroke}%
\end{pgfscope}%
\begin{pgfscope}%
\pgfsetbuttcap%
\pgfsetroundjoin%
\definecolor{currentfill}{rgb}{0.000000,0.000000,0.000000}%
\pgfsetfillcolor{currentfill}%
\pgfsetlinewidth{0.803000pt}%
\definecolor{currentstroke}{rgb}{0.000000,0.000000,0.000000}%
\pgfsetstrokecolor{currentstroke}%
\pgfsetdash{}{0pt}%
\pgfsys@defobject{currentmarker}{\pgfqpoint{-0.048611in}{0.000000in}}{\pgfqpoint{-0.000000in}{0.000000in}}{%
\pgfpathmoveto{\pgfqpoint{-0.000000in}{0.000000in}}%
\pgfpathlineto{\pgfqpoint{-0.048611in}{0.000000in}}%
\pgfusepath{stroke,fill}%
}%
\begin{pgfscope}%
\pgfsys@transformshift{0.504568in}{0.869512in}%
\pgfsys@useobject{currentmarker}{}%
\end{pgfscope}%
\end{pgfscope}%
\begin{pgfscope}%
\definecolor{textcolor}{rgb}{0.000000,0.000000,0.000000}%
\pgfsetstrokecolor{textcolor}%
\pgfsetfillcolor{textcolor}%
\pgftext[x=0.268457in, y=0.821287in, left, base]{\color{textcolor}{\rmfamily\fontsize{10.000000}{12.000000}\selectfont\catcode`\^=\active\def^{\ifmmode\sp\else\^{}\fi}\catcode`\%=\active\def
\end{pgfscope}%
\begin{pgfscope}%
\pgfpathrectangle{\pgfqpoint{0.504568in}{0.240679in}}{\pgfqpoint{1.937500in}{1.886500in}}%
\pgfusepath{clip}%
\pgfsetrectcap%
\pgfsetroundjoin%
\pgfsetlinewidth{0.250937pt}%
\definecolor{currentstroke}{rgb}{0.000000,0.000000,0.000000}%
\pgfsetstrokecolor{currentstroke}%
\pgfsetstrokeopacity{0.200000}%
\pgfsetdash{}{0pt}%
\pgfpathmoveto{\pgfqpoint{0.504568in}{1.183929in}}%
\pgfpathlineto{\pgfqpoint{2.442068in}{1.183929in}}%
\pgfusepath{stroke}%
\end{pgfscope}%
\begin{pgfscope}%
\pgfsetbuttcap%
\pgfsetroundjoin%
\definecolor{currentfill}{rgb}{0.000000,0.000000,0.000000}%
\pgfsetfillcolor{currentfill}%
\pgfsetlinewidth{0.803000pt}%
\definecolor{currentstroke}{rgb}{0.000000,0.000000,0.000000}%
\pgfsetstrokecolor{currentstroke}%
\pgfsetdash{}{0pt}%
\pgfsys@defobject{currentmarker}{\pgfqpoint{-0.048611in}{0.000000in}}{\pgfqpoint{-0.000000in}{0.000000in}}{%
\pgfpathmoveto{\pgfqpoint{-0.000000in}{0.000000in}}%
\pgfpathlineto{\pgfqpoint{-0.048611in}{0.000000in}}%
\pgfusepath{stroke,fill}%
}%
\begin{pgfscope}%
\pgfsys@transformshift{0.504568in}{1.183929in}%
\pgfsys@useobject{currentmarker}{}%
\end{pgfscope}%
\end{pgfscope}%
\begin{pgfscope}%
\definecolor{textcolor}{rgb}{0.000000,0.000000,0.000000}%
\pgfsetstrokecolor{textcolor}%
\pgfsetfillcolor{textcolor}%
\pgftext[x=0.268457in, y=1.135704in, left, base]{\color{textcolor}{\rmfamily\fontsize{10.000000}{12.000000}\selectfont\catcode`\^=\active\def^{\ifmmode\sp\else\^{}\fi}\catcode`\%=\active\def
\end{pgfscope}%
\begin{pgfscope}%
\pgfpathrectangle{\pgfqpoint{0.504568in}{0.240679in}}{\pgfqpoint{1.937500in}{1.886500in}}%
\pgfusepath{clip}%
\pgfsetrectcap%
\pgfsetroundjoin%
\pgfsetlinewidth{0.250937pt}%
\definecolor{currentstroke}{rgb}{0.000000,0.000000,0.000000}%
\pgfsetstrokecolor{currentstroke}%
\pgfsetstrokeopacity{0.200000}%
\pgfsetdash{}{0pt}%
\pgfpathmoveto{\pgfqpoint{0.504568in}{1.498346in}}%
\pgfpathlineto{\pgfqpoint{2.442068in}{1.498346in}}%
\pgfusepath{stroke}%
\end{pgfscope}%
\begin{pgfscope}%
\pgfsetbuttcap%
\pgfsetroundjoin%
\definecolor{currentfill}{rgb}{0.000000,0.000000,0.000000}%
\pgfsetfillcolor{currentfill}%
\pgfsetlinewidth{0.803000pt}%
\definecolor{currentstroke}{rgb}{0.000000,0.000000,0.000000}%
\pgfsetstrokecolor{currentstroke}%
\pgfsetdash{}{0pt}%
\pgfsys@defobject{currentmarker}{\pgfqpoint{-0.048611in}{0.000000in}}{\pgfqpoint{-0.000000in}{0.000000in}}{%
\pgfpathmoveto{\pgfqpoint{-0.000000in}{0.000000in}}%
\pgfpathlineto{\pgfqpoint{-0.048611in}{0.000000in}}%
\pgfusepath{stroke,fill}%
}%
\begin{pgfscope}%
\pgfsys@transformshift{0.504568in}{1.498346in}%
\pgfsys@useobject{currentmarker}{}%
\end{pgfscope}%
\end{pgfscope}%
\begin{pgfscope}%
\definecolor{textcolor}{rgb}{0.000000,0.000000,0.000000}%
\pgfsetstrokecolor{textcolor}%
\pgfsetfillcolor{textcolor}%
\pgftext[x=0.268457in, y=1.450120in, left, base]{\color{textcolor}{\rmfamily\fontsize{10.000000}{12.000000}\selectfont\catcode`\^=\active\def^{\ifmmode\sp\else\^{}\fi}\catcode`\%=\active\def
\end{pgfscope}%
\begin{pgfscope}%
\pgfpathrectangle{\pgfqpoint{0.504568in}{0.240679in}}{\pgfqpoint{1.937500in}{1.886500in}}%
\pgfusepath{clip}%
\pgfsetrectcap%
\pgfsetroundjoin%
\pgfsetlinewidth{0.250937pt}%
\definecolor{currentstroke}{rgb}{0.000000,0.000000,0.000000}%
\pgfsetstrokecolor{currentstroke}%
\pgfsetstrokeopacity{0.200000}%
\pgfsetdash{}{0pt}%
\pgfpathmoveto{\pgfqpoint{0.504568in}{1.812762in}}%
\pgfpathlineto{\pgfqpoint{2.442068in}{1.812762in}}%
\pgfusepath{stroke}%
\end{pgfscope}%
\begin{pgfscope}%
\pgfsetbuttcap%
\pgfsetroundjoin%
\definecolor{currentfill}{rgb}{0.000000,0.000000,0.000000}%
\pgfsetfillcolor{currentfill}%
\pgfsetlinewidth{0.803000pt}%
\definecolor{currentstroke}{rgb}{0.000000,0.000000,0.000000}%
\pgfsetstrokecolor{currentstroke}%
\pgfsetdash{}{0pt}%
\pgfsys@defobject{currentmarker}{\pgfqpoint{-0.048611in}{0.000000in}}{\pgfqpoint{-0.000000in}{0.000000in}}{%
\pgfpathmoveto{\pgfqpoint{-0.000000in}{0.000000in}}%
\pgfpathlineto{\pgfqpoint{-0.048611in}{0.000000in}}%
\pgfusepath{stroke,fill}%
}%
\begin{pgfscope}%
\pgfsys@transformshift{0.504568in}{1.812762in}%
\pgfsys@useobject{currentmarker}{}%
\end{pgfscope}%
\end{pgfscope}%
\begin{pgfscope}%
\definecolor{textcolor}{rgb}{0.000000,0.000000,0.000000}%
\pgfsetstrokecolor{textcolor}%
\pgfsetfillcolor{textcolor}%
\pgftext[x=0.268457in, y=1.764537in, left, base]{\color{textcolor}{\rmfamily\fontsize{10.000000}{12.000000}\selectfont\catcode`\^=\active\def^{\ifmmode\sp\else\^{}\fi}\catcode`\%=\active\def
\end{pgfscope}%
\begin{pgfscope}%
\pgfpathrectangle{\pgfqpoint{0.504568in}{0.240679in}}{\pgfqpoint{1.937500in}{1.886500in}}%
\pgfusepath{clip}%
\pgfsetrectcap%
\pgfsetroundjoin%
\pgfsetlinewidth{0.250937pt}%
\definecolor{currentstroke}{rgb}{0.000000,0.000000,0.000000}%
\pgfsetstrokecolor{currentstroke}%
\pgfsetstrokeopacity{0.200000}%
\pgfsetdash{}{0pt}%
\pgfpathmoveto{\pgfqpoint{0.504568in}{2.127179in}}%
\pgfpathlineto{\pgfqpoint{2.442068in}{2.127179in}}%
\pgfusepath{stroke}%
\end{pgfscope}%
\begin{pgfscope}%
\pgfsetbuttcap%
\pgfsetroundjoin%
\definecolor{currentfill}{rgb}{0.000000,0.000000,0.000000}%
\pgfsetfillcolor{currentfill}%
\pgfsetlinewidth{0.803000pt}%
\definecolor{currentstroke}{rgb}{0.000000,0.000000,0.000000}%
\pgfsetstrokecolor{currentstroke}%
\pgfsetdash{}{0pt}%
\pgfsys@defobject{currentmarker}{\pgfqpoint{-0.048611in}{0.000000in}}{\pgfqpoint{-0.000000in}{0.000000in}}{%
\pgfpathmoveto{\pgfqpoint{-0.000000in}{0.000000in}}%
\pgfpathlineto{\pgfqpoint{-0.048611in}{0.000000in}}%
\pgfusepath{stroke,fill}%
}%
\begin{pgfscope}%
\pgfsys@transformshift{0.504568in}{2.127179in}%
\pgfsys@useobject{currentmarker}{}%
\end{pgfscope}%
\end{pgfscope}%
\begin{pgfscope}%
\definecolor{textcolor}{rgb}{0.000000,0.000000,0.000000}%
\pgfsetstrokecolor{textcolor}%
\pgfsetfillcolor{textcolor}%
\pgftext[x=0.199012in, y=2.078954in, left, base]{\color{textcolor}{\rmfamily\fontsize{10.000000}{12.000000}\selectfont\catcode`\^=\active\def^{\ifmmode\sp\else\^{}\fi}\catcode`\%=\active\def
\end{pgfscope}%
\begin{pgfscope}%
\definecolor{textcolor}{rgb}{0.000000,0.000000,0.000000}%
\pgfsetstrokecolor{textcolor}%
\pgfsetfillcolor{textcolor}%
\pgftext[x=0.143457in,y=1.183929in,,bottom,rotate=90.000000]{\color{textcolor}{\rmfamily\fontsize{10.000000}{12.000000}\selectfont\catcode`\^=\active\def^{\ifmmode\sp\else\^{}\fi}\catcode`\%=\active\def
\end{pgfscope}%
\begin{pgfscope}%
\pgfpathrectangle{\pgfqpoint{0.504568in}{0.240679in}}{\pgfqpoint{1.937500in}{1.886500in}}%
\pgfusepath{clip}%
\pgfsetbuttcap%
\pgfsetmiterjoin%
\definecolor{currentfill}{rgb}{1.000000,0.690196,0.000000}%
\pgfsetfillcolor{currentfill}%
\pgfsetlinewidth{1.003750pt}%
\definecolor{currentstroke}{rgb}{0.000000,0.000000,0.000000}%
\pgfsetstrokecolor{currentstroke}%
\pgfsetdash{}{0pt}%
\pgfpathmoveto{\pgfqpoint{0.730610in}{1.884712in}}%
\pgfpathlineto{\pgfqpoint{0.924360in}{1.884712in}}%
\pgfpathlineto{\pgfqpoint{0.924360in}{1.967979in}}%
\pgfpathlineto{\pgfqpoint{0.730610in}{1.967979in}}%
\pgfpathlineto{\pgfqpoint{0.730610in}{1.884712in}}%
\pgfpathlineto{\pgfqpoint{0.730610in}{1.884712in}}%
\pgfpathclose%
\pgfusepath{stroke,fill}%
\end{pgfscope}%
\begin{pgfscope}%
\pgfpathrectangle{\pgfqpoint{0.504568in}{0.240679in}}{\pgfqpoint{1.937500in}{1.886500in}}%
\pgfusepath{clip}%
\pgfsetrectcap%
\pgfsetroundjoin%
\pgfsetlinewidth{1.003750pt}%
\definecolor{currentstroke}{rgb}{0.000000,0.000000,0.000000}%
\pgfsetstrokecolor{currentstroke}%
\pgfsetdash{}{0pt}%
\pgfpathmoveto{\pgfqpoint{0.827485in}{1.884712in}}%
\pgfpathlineto{\pgfqpoint{0.827485in}{1.819982in}}%
\pgfusepath{stroke}%
\end{pgfscope}%
\begin{pgfscope}%
\pgfpathrectangle{\pgfqpoint{0.504568in}{0.240679in}}{\pgfqpoint{1.937500in}{1.886500in}}%
\pgfusepath{clip}%
\pgfsetrectcap%
\pgfsetroundjoin%
\pgfsetlinewidth{1.003750pt}%
\definecolor{currentstroke}{rgb}{0.000000,0.000000,0.000000}%
\pgfsetstrokecolor{currentstroke}%
\pgfsetdash{}{0pt}%
\pgfpathmoveto{\pgfqpoint{0.827485in}{1.967979in}}%
\pgfpathlineto{\pgfqpoint{0.827485in}{2.045381in}}%
\pgfusepath{stroke}%
\end{pgfscope}%
\begin{pgfscope}%
\pgfpathrectangle{\pgfqpoint{0.504568in}{0.240679in}}{\pgfqpoint{1.937500in}{1.886500in}}%
\pgfusepath{clip}%
\pgfsetrectcap%
\pgfsetroundjoin%
\pgfsetlinewidth{1.003750pt}%
\definecolor{currentstroke}{rgb}{0.000000,0.000000,0.000000}%
\pgfsetstrokecolor{currentstroke}%
\pgfsetdash{}{0pt}%
\pgfpathmoveto{\pgfqpoint{0.779048in}{1.819982in}}%
\pgfpathlineto{\pgfqpoint{0.875923in}{1.819982in}}%
\pgfusepath{stroke}%
\end{pgfscope}%
\begin{pgfscope}%
\pgfpathrectangle{\pgfqpoint{0.504568in}{0.240679in}}{\pgfqpoint{1.937500in}{1.886500in}}%
\pgfusepath{clip}%
\pgfsetrectcap%
\pgfsetroundjoin%
\pgfsetlinewidth{1.003750pt}%
\definecolor{currentstroke}{rgb}{0.000000,0.000000,0.000000}%
\pgfsetstrokecolor{currentstroke}%
\pgfsetdash{}{0pt}%
\pgfpathmoveto{\pgfqpoint{0.779048in}{2.045381in}}%
\pgfpathlineto{\pgfqpoint{0.875923in}{2.045381in}}%
\pgfusepath{stroke}%
\end{pgfscope}%
\begin{pgfscope}%
\pgfpathrectangle{\pgfqpoint{0.504568in}{0.240679in}}{\pgfqpoint{1.937500in}{1.886500in}}%
\pgfusepath{clip}%
\pgfsetbuttcap%
\pgfsetroundjoin%
\definecolor{currentfill}{rgb}{0.000000,0.000000,0.000000}%
\pgfsetfillcolor{currentfill}%
\pgfsetfillopacity{0.000000}%
\pgfsetlinewidth{1.003750pt}%
\definecolor{currentstroke}{rgb}{0.000000,0.000000,0.000000}%
\pgfsetstrokecolor{currentstroke}%
\pgfsetdash{}{0pt}%
\pgfsys@defobject{currentmarker}{\pgfqpoint{-0.041667in}{-0.041667in}}{\pgfqpoint{0.041667in}{0.041667in}}{%
\pgfpathmoveto{\pgfqpoint{0.000000in}{-0.041667in}}%
\pgfpathcurveto{\pgfqpoint{0.011050in}{-0.041667in}}{\pgfqpoint{0.021649in}{-0.037276in}}{\pgfqpoint{0.029463in}{-0.029463in}}%
\pgfpathcurveto{\pgfqpoint{0.037276in}{-0.021649in}}{\pgfqpoint{0.041667in}{-0.011050in}}{\pgfqpoint{0.041667in}{0.000000in}}%
\pgfpathcurveto{\pgfqpoint{0.041667in}{0.011050in}}{\pgfqpoint{0.037276in}{0.021649in}}{\pgfqpoint{0.029463in}{0.029463in}}%
\pgfpathcurveto{\pgfqpoint{0.021649in}{0.037276in}}{\pgfqpoint{0.011050in}{0.041667in}}{\pgfqpoint{0.000000in}{0.041667in}}%
\pgfpathcurveto{\pgfqpoint{-0.011050in}{0.041667in}}{\pgfqpoint{-0.021649in}{0.037276in}}{\pgfqpoint{-0.029463in}{0.029463in}}%
\pgfpathcurveto{\pgfqpoint{-0.037276in}{0.021649in}}{\pgfqpoint{-0.041667in}{0.011050in}}{\pgfqpoint{-0.041667in}{0.000000in}}%
\pgfpathcurveto{\pgfqpoint{-0.041667in}{-0.011050in}}{\pgfqpoint{-0.037276in}{-0.021649in}}{\pgfqpoint{-0.029463in}{-0.029463in}}%
\pgfpathcurveto{\pgfqpoint{-0.021649in}{-0.037276in}}{\pgfqpoint{-0.011050in}{-0.041667in}}{\pgfqpoint{0.000000in}{-0.041667in}}%
\pgfpathlineto{\pgfqpoint{0.000000in}{-0.041667in}}%
\pgfpathclose%
\pgfusepath{stroke,fill}%
}%
\begin{pgfscope}%
\pgfsys@transformshift{0.827485in}{0.464460in}%
\pgfsys@useobject{currentmarker}{}%
\end{pgfscope}%
\end{pgfscope}%
\begin{pgfscope}%
\pgfpathrectangle{\pgfqpoint{0.504568in}{0.240679in}}{\pgfqpoint{1.937500in}{1.886500in}}%
\pgfusepath{clip}%
\pgfsetbuttcap%
\pgfsetmiterjoin%
\definecolor{currentfill}{rgb}{0.862745,0.149020,0.498039}%
\pgfsetfillcolor{currentfill}%
\pgfsetlinewidth{1.003750pt}%
\definecolor{currentstroke}{rgb}{0.000000,0.000000,0.000000}%
\pgfsetstrokecolor{currentstroke}%
\pgfsetdash{}{0pt}%
\pgfpathmoveto{\pgfqpoint{1.376443in}{1.836409in}}%
\pgfpathlineto{\pgfqpoint{1.570193in}{1.836409in}}%
\pgfpathlineto{\pgfqpoint{1.570193in}{1.933370in}}%
\pgfpathlineto{\pgfqpoint{1.376443in}{1.933370in}}%
\pgfpathlineto{\pgfqpoint{1.376443in}{1.836409in}}%
\pgfpathlineto{\pgfqpoint{1.376443in}{1.836409in}}%
\pgfpathclose%
\pgfusepath{stroke,fill}%
\end{pgfscope}%
\begin{pgfscope}%
\pgfpathrectangle{\pgfqpoint{0.504568in}{0.240679in}}{\pgfqpoint{1.937500in}{1.886500in}}%
\pgfusepath{clip}%
\pgfsetrectcap%
\pgfsetroundjoin%
\pgfsetlinewidth{1.003750pt}%
\definecolor{currentstroke}{rgb}{0.000000,0.000000,0.000000}%
\pgfsetstrokecolor{currentstroke}%
\pgfsetdash{}{0pt}%
\pgfpathmoveto{\pgfqpoint{1.473318in}{1.836409in}}%
\pgfpathlineto{\pgfqpoint{1.473318in}{1.761424in}}%
\pgfusepath{stroke}%
\end{pgfscope}%
\begin{pgfscope}%
\pgfpathrectangle{\pgfqpoint{0.504568in}{0.240679in}}{\pgfqpoint{1.937500in}{1.886500in}}%
\pgfusepath{clip}%
\pgfsetrectcap%
\pgfsetroundjoin%
\pgfsetlinewidth{1.003750pt}%
\definecolor{currentstroke}{rgb}{0.000000,0.000000,0.000000}%
\pgfsetstrokecolor{currentstroke}%
\pgfsetdash{}{0pt}%
\pgfpathmoveto{\pgfqpoint{1.473318in}{1.933370in}}%
\pgfpathlineto{\pgfqpoint{1.473318in}{2.044260in}}%
\pgfusepath{stroke}%
\end{pgfscope}%
\begin{pgfscope}%
\pgfpathrectangle{\pgfqpoint{0.504568in}{0.240679in}}{\pgfqpoint{1.937500in}{1.886500in}}%
\pgfusepath{clip}%
\pgfsetrectcap%
\pgfsetroundjoin%
\pgfsetlinewidth{1.003750pt}%
\definecolor{currentstroke}{rgb}{0.000000,0.000000,0.000000}%
\pgfsetstrokecolor{currentstroke}%
\pgfsetdash{}{0pt}%
\pgfpathmoveto{\pgfqpoint{1.424881in}{1.761424in}}%
\pgfpathlineto{\pgfqpoint{1.521756in}{1.761424in}}%
\pgfusepath{stroke}%
\end{pgfscope}%
\begin{pgfscope}%
\pgfpathrectangle{\pgfqpoint{0.504568in}{0.240679in}}{\pgfqpoint{1.937500in}{1.886500in}}%
\pgfusepath{clip}%
\pgfsetrectcap%
\pgfsetroundjoin%
\pgfsetlinewidth{1.003750pt}%
\definecolor{currentstroke}{rgb}{0.000000,0.000000,0.000000}%
\pgfsetstrokecolor{currentstroke}%
\pgfsetdash{}{0pt}%
\pgfpathmoveto{\pgfqpoint{1.424881in}{2.044260in}}%
\pgfpathlineto{\pgfqpoint{1.521756in}{2.044260in}}%
\pgfusepath{stroke}%
\end{pgfscope}%
\begin{pgfscope}%
\pgfpathrectangle{\pgfqpoint{0.504568in}{0.240679in}}{\pgfqpoint{1.937500in}{1.886500in}}%
\pgfusepath{clip}%
\pgfsetbuttcap%
\pgfsetroundjoin%
\definecolor{currentfill}{rgb}{0.000000,0.000000,0.000000}%
\pgfsetfillcolor{currentfill}%
\pgfsetfillopacity{0.000000}%
\pgfsetlinewidth{1.003750pt}%
\definecolor{currentstroke}{rgb}{0.000000,0.000000,0.000000}%
\pgfsetstrokecolor{currentstroke}%
\pgfsetdash{}{0pt}%
\pgfsys@defobject{currentmarker}{\pgfqpoint{-0.041667in}{-0.041667in}}{\pgfqpoint{0.041667in}{0.041667in}}{%
\pgfpathmoveto{\pgfqpoint{0.000000in}{-0.041667in}}%
\pgfpathcurveto{\pgfqpoint{0.011050in}{-0.041667in}}{\pgfqpoint{0.021649in}{-0.037276in}}{\pgfqpoint{0.029463in}{-0.029463in}}%
\pgfpathcurveto{\pgfqpoint{0.037276in}{-0.021649in}}{\pgfqpoint{0.041667in}{-0.011050in}}{\pgfqpoint{0.041667in}{0.000000in}}%
\pgfpathcurveto{\pgfqpoint{0.041667in}{0.011050in}}{\pgfqpoint{0.037276in}{0.021649in}}{\pgfqpoint{0.029463in}{0.029463in}}%
\pgfpathcurveto{\pgfqpoint{0.021649in}{0.037276in}}{\pgfqpoint{0.011050in}{0.041667in}}{\pgfqpoint{0.000000in}{0.041667in}}%
\pgfpathcurveto{\pgfqpoint{-0.011050in}{0.041667in}}{\pgfqpoint{-0.021649in}{0.037276in}}{\pgfqpoint{-0.029463in}{0.029463in}}%
\pgfpathcurveto{\pgfqpoint{-0.037276in}{0.021649in}}{\pgfqpoint{-0.041667in}{0.011050in}}{\pgfqpoint{-0.041667in}{0.000000in}}%
\pgfpathcurveto{\pgfqpoint{-0.041667in}{-0.011050in}}{\pgfqpoint{-0.037276in}{-0.021649in}}{\pgfqpoint{-0.029463in}{-0.029463in}}%
\pgfpathcurveto{\pgfqpoint{-0.021649in}{-0.037276in}}{\pgfqpoint{-0.011050in}{-0.041667in}}{\pgfqpoint{0.000000in}{-0.041667in}}%
\pgfpathlineto{\pgfqpoint{0.000000in}{-0.041667in}}%
\pgfpathclose%
\pgfusepath{stroke,fill}%
}%
\begin{pgfscope}%
\pgfsys@transformshift{1.473318in}{1.629012in}%
\pgfsys@useobject{currentmarker}{}%
\end{pgfscope}%
\begin{pgfscope}%
\pgfsys@transformshift{1.473318in}{1.046129in}%
\pgfsys@useobject{currentmarker}{}%
\end{pgfscope}%
\begin{pgfscope}%
\pgfsys@transformshift{1.473318in}{0.462490in}%
\pgfsys@useobject{currentmarker}{}%
\end{pgfscope}%
\end{pgfscope}%
\begin{pgfscope}%
\pgfpathrectangle{\pgfqpoint{0.504568in}{0.240679in}}{\pgfqpoint{1.937500in}{1.886500in}}%
\pgfusepath{clip}%
\pgfsetbuttcap%
\pgfsetmiterjoin%
\definecolor{currentfill}{rgb}{0.392157,0.560784,1.000000}%
\pgfsetfillcolor{currentfill}%
\pgfsetlinewidth{1.003750pt}%
\definecolor{currentstroke}{rgb}{0.000000,0.000000,0.000000}%
\pgfsetstrokecolor{currentstroke}%
\pgfsetdash{}{0pt}%
\pgfpathmoveto{\pgfqpoint{2.022277in}{1.885749in}}%
\pgfpathlineto{\pgfqpoint{2.216027in}{1.885749in}}%
\pgfpathlineto{\pgfqpoint{2.216027in}{1.967823in}}%
\pgfpathlineto{\pgfqpoint{2.022277in}{1.967823in}}%
\pgfpathlineto{\pgfqpoint{2.022277in}{1.885749in}}%
\pgfpathlineto{\pgfqpoint{2.022277in}{1.885749in}}%
\pgfpathclose%
\pgfusepath{stroke,fill}%
\end{pgfscope}%
\begin{pgfscope}%
\pgfpathrectangle{\pgfqpoint{0.504568in}{0.240679in}}{\pgfqpoint{1.937500in}{1.886500in}}%
\pgfusepath{clip}%
\pgfsetrectcap%
\pgfsetroundjoin%
\pgfsetlinewidth{1.003750pt}%
\definecolor{currentstroke}{rgb}{0.000000,0.000000,0.000000}%
\pgfsetstrokecolor{currentstroke}%
\pgfsetdash{}{0pt}%
\pgfpathmoveto{\pgfqpoint{2.119152in}{1.885749in}}%
\pgfpathlineto{\pgfqpoint{2.119152in}{1.788737in}}%
\pgfusepath{stroke}%
\end{pgfscope}%
\begin{pgfscope}%
\pgfpathrectangle{\pgfqpoint{0.504568in}{0.240679in}}{\pgfqpoint{1.937500in}{1.886500in}}%
\pgfusepath{clip}%
\pgfsetrectcap%
\pgfsetroundjoin%
\pgfsetlinewidth{1.003750pt}%
\definecolor{currentstroke}{rgb}{0.000000,0.000000,0.000000}%
\pgfsetstrokecolor{currentstroke}%
\pgfsetdash{}{0pt}%
\pgfpathmoveto{\pgfqpoint{2.119152in}{1.967823in}}%
\pgfpathlineto{\pgfqpoint{2.119152in}{2.017382in}}%
\pgfusepath{stroke}%
\end{pgfscope}%
\begin{pgfscope}%
\pgfpathrectangle{\pgfqpoint{0.504568in}{0.240679in}}{\pgfqpoint{1.937500in}{1.886500in}}%
\pgfusepath{clip}%
\pgfsetrectcap%
\pgfsetroundjoin%
\pgfsetlinewidth{1.003750pt}%
\definecolor{currentstroke}{rgb}{0.000000,0.000000,0.000000}%
\pgfsetstrokecolor{currentstroke}%
\pgfsetdash{}{0pt}%
\pgfpathmoveto{\pgfqpoint{2.070714in}{1.788737in}}%
\pgfpathlineto{\pgfqpoint{2.167589in}{1.788737in}}%
\pgfusepath{stroke}%
\end{pgfscope}%
\begin{pgfscope}%
\pgfpathrectangle{\pgfqpoint{0.504568in}{0.240679in}}{\pgfqpoint{1.937500in}{1.886500in}}%
\pgfusepath{clip}%
\pgfsetrectcap%
\pgfsetroundjoin%
\pgfsetlinewidth{1.003750pt}%
\definecolor{currentstroke}{rgb}{0.000000,0.000000,0.000000}%
\pgfsetstrokecolor{currentstroke}%
\pgfsetdash{}{0pt}%
\pgfpathmoveto{\pgfqpoint{2.070714in}{2.017382in}}%
\pgfpathlineto{\pgfqpoint{2.167589in}{2.017382in}}%
\pgfusepath{stroke}%
\end{pgfscope}%
\begin{pgfscope}%
\pgfpathrectangle{\pgfqpoint{0.504568in}{0.240679in}}{\pgfqpoint{1.937500in}{1.886500in}}%
\pgfusepath{clip}%
\pgfsetbuttcap%
\pgfsetroundjoin%
\definecolor{currentfill}{rgb}{0.000000,0.000000,0.000000}%
\pgfsetfillcolor{currentfill}%
\pgfsetfillopacity{0.000000}%
\pgfsetlinewidth{1.003750pt}%
\definecolor{currentstroke}{rgb}{0.000000,0.000000,0.000000}%
\pgfsetstrokecolor{currentstroke}%
\pgfsetdash{}{0pt}%
\pgfsys@defobject{currentmarker}{\pgfqpoint{-0.041667in}{-0.041667in}}{\pgfqpoint{0.041667in}{0.041667in}}{%
\pgfpathmoveto{\pgfqpoint{0.000000in}{-0.041667in}}%
\pgfpathcurveto{\pgfqpoint{0.011050in}{-0.041667in}}{\pgfqpoint{0.021649in}{-0.037276in}}{\pgfqpoint{0.029463in}{-0.029463in}}%
\pgfpathcurveto{\pgfqpoint{0.037276in}{-0.021649in}}{\pgfqpoint{0.041667in}{-0.011050in}}{\pgfqpoint{0.041667in}{0.000000in}}%
\pgfpathcurveto{\pgfqpoint{0.041667in}{0.011050in}}{\pgfqpoint{0.037276in}{0.021649in}}{\pgfqpoint{0.029463in}{0.029463in}}%
\pgfpathcurveto{\pgfqpoint{0.021649in}{0.037276in}}{\pgfqpoint{0.011050in}{0.041667in}}{\pgfqpoint{0.000000in}{0.041667in}}%
\pgfpathcurveto{\pgfqpoint{-0.011050in}{0.041667in}}{\pgfqpoint{-0.021649in}{0.037276in}}{\pgfqpoint{-0.029463in}{0.029463in}}%
\pgfpathcurveto{\pgfqpoint{-0.037276in}{0.021649in}}{\pgfqpoint{-0.041667in}{0.011050in}}{\pgfqpoint{-0.041667in}{0.000000in}}%
\pgfpathcurveto{\pgfqpoint{-0.041667in}{-0.011050in}}{\pgfqpoint{-0.037276in}{-0.021649in}}{\pgfqpoint{-0.029463in}{-0.029463in}}%
\pgfpathcurveto{\pgfqpoint{-0.021649in}{-0.037276in}}{\pgfqpoint{-0.011050in}{-0.041667in}}{\pgfqpoint{0.000000in}{-0.041667in}}%
\pgfpathlineto{\pgfqpoint{0.000000in}{-0.041667in}}%
\pgfpathclose%
\pgfusepath{stroke,fill}%
}%
\begin{pgfscope}%
\pgfsys@transformshift{2.119152in}{0.463729in}%
\pgfsys@useobject{currentmarker}{}%
\end{pgfscope}%
\end{pgfscope}%
\begin{pgfscope}%
\pgfpathrectangle{\pgfqpoint{0.504568in}{0.240679in}}{\pgfqpoint{1.937500in}{1.886500in}}%
\pgfusepath{clip}%
\pgfsetbuttcap%
\pgfsetroundjoin%
\pgfsetlinewidth{1.003750pt}%
\definecolor{currentstroke}{rgb}{0.000000,0.000000,0.000000}%
\pgfsetstrokecolor{currentstroke}%
\pgfsetdash{}{0pt}%
\pgfpathmoveto{\pgfqpoint{0.730610in}{1.922602in}}%
\pgfpathlineto{\pgfqpoint{0.924360in}{1.922602in}}%
\pgfusepath{stroke}%
\end{pgfscope}%
\begin{pgfscope}%
\pgfpathrectangle{\pgfqpoint{0.504568in}{0.240679in}}{\pgfqpoint{1.937500in}{1.886500in}}%
\pgfusepath{clip}%
\pgfsetbuttcap%
\pgfsetroundjoin%
\pgfsetlinewidth{1.003750pt}%
\definecolor{currentstroke}{rgb}{0.000000,0.000000,0.000000}%
\pgfsetstrokecolor{currentstroke}%
\pgfsetdash{}{0pt}%
\pgfpathmoveto{\pgfqpoint{1.376443in}{1.889997in}}%
\pgfpathlineto{\pgfqpoint{1.570193in}{1.889997in}}%
\pgfusepath{stroke}%
\end{pgfscope}%
\begin{pgfscope}%
\pgfpathrectangle{\pgfqpoint{0.504568in}{0.240679in}}{\pgfqpoint{1.937500in}{1.886500in}}%
\pgfusepath{clip}%
\pgfsetbuttcap%
\pgfsetroundjoin%
\pgfsetlinewidth{1.003750pt}%
\definecolor{currentstroke}{rgb}{0.000000,0.000000,0.000000}%
\pgfsetstrokecolor{currentstroke}%
\pgfsetdash{}{0pt}%
\pgfpathmoveto{\pgfqpoint{2.022277in}{1.921754in}}%
\pgfpathlineto{\pgfqpoint{2.216027in}{1.921754in}}%
\pgfusepath{stroke}%
\end{pgfscope}%
\begin{pgfscope}%
\pgfsetrectcap%
\pgfsetmiterjoin%
\pgfsetlinewidth{0.803000pt}%
\definecolor{currentstroke}{rgb}{0.000000,0.000000,0.000000}%
\pgfsetstrokecolor{currentstroke}%
\pgfsetdash{}{0pt}%
\pgfpathmoveto{\pgfqpoint{0.504568in}{0.240679in}}%
\pgfpathlineto{\pgfqpoint{0.504568in}{2.127179in}}%
\pgfusepath{stroke}%
\end{pgfscope}%
\begin{pgfscope}%
\pgfsetrectcap%
\pgfsetmiterjoin%
\pgfsetlinewidth{0.803000pt}%
\definecolor{currentstroke}{rgb}{0.000000,0.000000,0.000000}%
\pgfsetstrokecolor{currentstroke}%
\pgfsetdash{}{0pt}%
\pgfpathmoveto{\pgfqpoint{2.442068in}{0.240679in}}%
\pgfpathlineto{\pgfqpoint{2.442068in}{2.127179in}}%
\pgfusepath{stroke}%
\end{pgfscope}%
\begin{pgfscope}%
\pgfsetrectcap%
\pgfsetmiterjoin%
\pgfsetlinewidth{0.803000pt}%
\definecolor{currentstroke}{rgb}{0.000000,0.000000,0.000000}%
\pgfsetstrokecolor{currentstroke}%
\pgfsetdash{}{0pt}%
\pgfpathmoveto{\pgfqpoint{0.504568in}{0.240679in}}%
\pgfpathlineto{\pgfqpoint{2.442068in}{0.240679in}}%
\pgfusepath{stroke}%
\end{pgfscope}%
\begin{pgfscope}%
\pgfsetrectcap%
\pgfsetmiterjoin%
\pgfsetlinewidth{0.803000pt}%
\definecolor{currentstroke}{rgb}{0.000000,0.000000,0.000000}%
\pgfsetstrokecolor{currentstroke}%
\pgfsetdash{}{0pt}%
\pgfpathmoveto{\pgfqpoint{0.504568in}{2.127179in}}%
\pgfpathlineto{\pgfqpoint{2.442068in}{2.127179in}}%
\pgfusepath{stroke}%
\end{pgfscope}%
\end{pgfpicture}%
\makeatother%
\endgroup%

%% file: plots/comparison_large_times.pgf
\begingroup%
\makeatletter%
\begin{pgfpicture}%
\pgfpathrectangle{\pgfpointorigin}{\pgfqpoint{2.477501in}{2.170111in}}%
\pgfusepath{use as bounding box, clip}%
\begin{pgfscope}%
\pgfsetbuttcap%
\pgfsetmiterjoin%
\definecolor{currentfill}{rgb}{1.000000,1.000000,1.000000}%
\pgfsetfillcolor{currentfill}%
\pgfsetlinewidth{0.000000pt}%
\definecolor{currentstroke}{rgb}{1.000000,1.000000,1.000000}%
\pgfsetstrokecolor{currentstroke}%
\pgfsetdash{}{0pt}%
\pgfpathmoveto{\pgfqpoint{0.000000in}{0.000000in}}%
\pgfpathlineto{\pgfqpoint{2.477501in}{0.000000in}}%
\pgfpathlineto{\pgfqpoint{2.477501in}{2.170111in}}%
\pgfpathlineto{\pgfqpoint{0.000000in}{2.170111in}}%
\pgfpathlineto{\pgfqpoint{0.000000in}{0.000000in}}%
\pgfpathclose%
\pgfusepath{fill}%
\end{pgfscope}%
\begin{pgfscope}%
\pgfsetbuttcap%
\pgfsetmiterjoin%
\definecolor{currentfill}{rgb}{1.000000,1.000000,1.000000}%
\pgfsetfillcolor{currentfill}%
\pgfsetlinewidth{0.000000pt}%
\definecolor{currentstroke}{rgb}{0.000000,0.000000,0.000000}%
\pgfsetstrokecolor{currentstroke}%
\pgfsetstrokeopacity{0.000000}%
\pgfsetdash{}{0pt}%
\pgfpathmoveto{\pgfqpoint{0.520001in}{0.240679in}}%
\pgfpathlineto{\pgfqpoint{2.457501in}{0.240679in}}%
\pgfpathlineto{\pgfqpoint{2.457501in}{2.127179in}}%
\pgfpathlineto{\pgfqpoint{0.520001in}{2.127179in}}%
\pgfpathlineto{\pgfqpoint{0.520001in}{0.240679in}}%
\pgfpathclose%
\pgfusepath{fill}%
\end{pgfscope}%
\begin{pgfscope}%
\pgfpathrectangle{\pgfqpoint{0.520001in}{0.240679in}}{\pgfqpoint{1.937500in}{1.886500in}}%
\pgfusepath{clip}%
\pgfsetrectcap%
\pgfsetroundjoin%
\pgfsetlinewidth{0.250937pt}%
\definecolor{currentstroke}{rgb}{0.000000,0.000000,0.000000}%
\pgfsetstrokecolor{currentstroke}%
\pgfsetstrokeopacity{0.200000}%
\pgfsetdash{}{0pt}%
\pgfpathmoveto{\pgfqpoint{0.842917in}{0.240679in}}%
\pgfpathlineto{\pgfqpoint{0.842917in}{2.127179in}}%
\pgfusepath{stroke}%
\end{pgfscope}%
\begin{pgfscope}%
\pgfsetbuttcap%
\pgfsetroundjoin%
\definecolor{currentfill}{rgb}{0.000000,0.000000,0.000000}%
\pgfsetfillcolor{currentfill}%
\pgfsetlinewidth{0.803000pt}%
\definecolor{currentstroke}{rgb}{0.000000,0.000000,0.000000}%
\pgfsetstrokecolor{currentstroke}%
\pgfsetdash{}{0pt}%
\pgfsys@defobject{currentmarker}{\pgfqpoint{0.000000in}{-0.048611in}}{\pgfqpoint{0.000000in}{0.000000in}}{%
\pgfpathmoveto{\pgfqpoint{0.000000in}{0.000000in}}%
\pgfpathlineto{\pgfqpoint{0.000000in}{-0.048611in}}%
\pgfusepath{stroke,fill}%
}%
\begin{pgfscope}%
\pgfsys@transformshift{0.842917in}{0.240679in}%
\pgfsys@useobject{currentmarker}{}%
\end{pgfscope}%
\end{pgfscope}%
\begin{pgfscope}%
\definecolor{textcolor}{rgb}{0.000000,0.000000,0.000000}%
\pgfsetstrokecolor{textcolor}%
\pgfsetfillcolor{textcolor}%
\pgftext[x=0.842917in,y=0.143457in,,top]{\color{textcolor}{\rmfamily\fontsize{10.000000}{12.000000}\selectfont\catcode`\^=\active\def^{\ifmmode\sp\else\^{}\fi}\catcode`\%=\active\def
\end{pgfscope}%
\begin{pgfscope}%
\pgfpathrectangle{\pgfqpoint{0.520001in}{0.240679in}}{\pgfqpoint{1.937500in}{1.886500in}}%
\pgfusepath{clip}%
\pgfsetrectcap%
\pgfsetroundjoin%
\pgfsetlinewidth{0.250937pt}%
\definecolor{currentstroke}{rgb}{0.000000,0.000000,0.000000}%
\pgfsetstrokecolor{currentstroke}%
\pgfsetstrokeopacity{0.200000}%
\pgfsetdash{}{0pt}%
\pgfpathmoveto{\pgfqpoint{1.488751in}{0.240679in}}%
\pgfpathlineto{\pgfqpoint{1.488751in}{2.127179in}}%
\pgfusepath{stroke}%
\end{pgfscope}%
\begin{pgfscope}%
\pgfsetbuttcap%
\pgfsetroundjoin%
\definecolor{currentfill}{rgb}{0.000000,0.000000,0.000000}%
\pgfsetfillcolor{currentfill}%
\pgfsetlinewidth{0.803000pt}%
\definecolor{currentstroke}{rgb}{0.000000,0.000000,0.000000}%
\pgfsetstrokecolor{currentstroke}%
\pgfsetdash{}{0pt}%
\pgfsys@defobject{currentmarker}{\pgfqpoint{0.000000in}{-0.048611in}}{\pgfqpoint{0.000000in}{0.000000in}}{%
\pgfpathmoveto{\pgfqpoint{0.000000in}{0.000000in}}%
\pgfpathlineto{\pgfqpoint{0.000000in}{-0.048611in}}%
\pgfusepath{stroke,fill}%
}%
\begin{pgfscope}%
\pgfsys@transformshift{1.488751in}{0.240679in}%
\pgfsys@useobject{currentmarker}{}%
\end{pgfscope}%
\end{pgfscope}%
\begin{pgfscope}%
\definecolor{textcolor}{rgb}{0.000000,0.000000,0.000000}%
\pgfsetstrokecolor{textcolor}%
\pgfsetfillcolor{textcolor}%
\pgftext[x=1.488751in,y=0.143457in,,top]{\color{textcolor}{\rmfamily\fontsize{10.000000}{12.000000}\selectfont\catcode`\^=\active\def^{\ifmmode\sp\else\^{}\fi}\catcode`\%=\active\def
\end{pgfscope}%
\begin{pgfscope}%
\pgfpathrectangle{\pgfqpoint{0.520001in}{0.240679in}}{\pgfqpoint{1.937500in}{1.886500in}}%
\pgfusepath{clip}%
\pgfsetrectcap%
\pgfsetroundjoin%
\pgfsetlinewidth{0.250937pt}%
\definecolor{currentstroke}{rgb}{0.000000,0.000000,0.000000}%
\pgfsetstrokecolor{currentstroke}%
\pgfsetstrokeopacity{0.200000}%
\pgfsetdash{}{0pt}%
\pgfpathmoveto{\pgfqpoint{2.134584in}{0.240679in}}%
\pgfpathlineto{\pgfqpoint{2.134584in}{2.127179in}}%
\pgfusepath{stroke}%
\end{pgfscope}%
\begin{pgfscope}%
\pgfsetbuttcap%
\pgfsetroundjoin%
\definecolor{currentfill}{rgb}{0.000000,0.000000,0.000000}%
\pgfsetfillcolor{currentfill}%
\pgfsetlinewidth{0.803000pt}%
\definecolor{currentstroke}{rgb}{0.000000,0.000000,0.000000}%
\pgfsetstrokecolor{currentstroke}%
\pgfsetdash{}{0pt}%
\pgfsys@defobject{currentmarker}{\pgfqpoint{0.000000in}{-0.048611in}}{\pgfqpoint{0.000000in}{0.000000in}}{%
\pgfpathmoveto{\pgfqpoint{0.000000in}{0.000000in}}%
\pgfpathlineto{\pgfqpoint{0.000000in}{-0.048611in}}%
\pgfusepath{stroke,fill}%
}%
\begin{pgfscope}%
\pgfsys@transformshift{2.134584in}{0.240679in}%
\pgfsys@useobject{currentmarker}{}%
\end{pgfscope}%
\end{pgfscope}%
\begin{pgfscope}%
\definecolor{textcolor}{rgb}{0.000000,0.000000,0.000000}%
\pgfsetstrokecolor{textcolor}%
\pgfsetfillcolor{textcolor}%
\pgftext[x=2.134584in,y=0.143457in,,top]{\color{textcolor}{\rmfamily\fontsize{10.000000}{12.000000}\selectfont\catcode`\^=\active\def^{\ifmmode\sp\else\^{}\fi}\catcode`\%=\active\def
\end{pgfscope}%
\begin{pgfscope}%
\pgfpathrectangle{\pgfqpoint{0.520001in}{0.240679in}}{\pgfqpoint{1.937500in}{1.886500in}}%
\pgfusepath{clip}%
\pgfsetrectcap%
\pgfsetroundjoin%
\pgfsetlinewidth{0.250937pt}%
\definecolor{currentstroke}{rgb}{0.000000,0.000000,0.000000}%
\pgfsetstrokecolor{currentstroke}%
\pgfsetstrokeopacity{0.200000}%
\pgfsetdash{}{0pt}%
\pgfpathmoveto{\pgfqpoint{0.520001in}{0.605991in}}%
\pgfpathlineto{\pgfqpoint{2.457501in}{0.605991in}}%
\pgfusepath{stroke}%
\end{pgfscope}%
\begin{pgfscope}%
\pgfsetbuttcap%
\pgfsetroundjoin%
\definecolor{currentfill}{rgb}{0.000000,0.000000,0.000000}%
\pgfsetfillcolor{currentfill}%
\pgfsetlinewidth{0.803000pt}%
\definecolor{currentstroke}{rgb}{0.000000,0.000000,0.000000}%
\pgfsetstrokecolor{currentstroke}%
\pgfsetdash{}{0pt}%
\pgfsys@defobject{currentmarker}{\pgfqpoint{-0.048611in}{0.000000in}}{\pgfqpoint{-0.000000in}{0.000000in}}{%
\pgfpathmoveto{\pgfqpoint{-0.000000in}{0.000000in}}%
\pgfpathlineto{\pgfqpoint{-0.048611in}{0.000000in}}%
\pgfusepath{stroke,fill}%
}%
\begin{pgfscope}%
\pgfsys@transformshift{0.520001in}{0.605991in}%
\pgfsys@useobject{currentmarker}{}%
\end{pgfscope}%
\end{pgfscope}%
\begin{pgfscope}%
\definecolor{textcolor}{rgb}{0.000000,0.000000,0.000000}%
\pgfsetstrokecolor{textcolor}%
\pgfsetfillcolor{textcolor}%
\pgftext[x=0.283889in, y=0.557766in, left, base]{\color{textcolor}{\rmfamily\fontsize{10.000000}{12.000000}\selectfont\catcode`\^=\active\def^{\ifmmode\sp\else\^{}\fi}\catcode`\%=\active\def
\end{pgfscope}%
\begin{pgfscope}%
\pgfpathrectangle{\pgfqpoint{0.520001in}{0.240679in}}{\pgfqpoint{1.937500in}{1.886500in}}%
\pgfusepath{clip}%
\pgfsetrectcap%
\pgfsetroundjoin%
\pgfsetlinewidth{0.250937pt}%
\definecolor{currentstroke}{rgb}{0.000000,0.000000,0.000000}%
\pgfsetstrokecolor{currentstroke}%
\pgfsetstrokeopacity{0.200000}%
\pgfsetdash{}{0pt}%
\pgfpathmoveto{\pgfqpoint{0.520001in}{0.979965in}}%
\pgfpathlineto{\pgfqpoint{2.457501in}{0.979965in}}%
\pgfusepath{stroke}%
\end{pgfscope}%
\begin{pgfscope}%
\pgfsetbuttcap%
\pgfsetroundjoin%
\definecolor{currentfill}{rgb}{0.000000,0.000000,0.000000}%
\pgfsetfillcolor{currentfill}%
\pgfsetlinewidth{0.803000pt}%
\definecolor{currentstroke}{rgb}{0.000000,0.000000,0.000000}%
\pgfsetstrokecolor{currentstroke}%
\pgfsetdash{}{0pt}%
\pgfsys@defobject{currentmarker}{\pgfqpoint{-0.048611in}{0.000000in}}{\pgfqpoint{-0.000000in}{0.000000in}}{%
\pgfpathmoveto{\pgfqpoint{-0.000000in}{0.000000in}}%
\pgfpathlineto{\pgfqpoint{-0.048611in}{0.000000in}}%
\pgfusepath{stroke,fill}%
}%
\begin{pgfscope}%
\pgfsys@transformshift{0.520001in}{0.979965in}%
\pgfsys@useobject{currentmarker}{}%
\end{pgfscope}%
\end{pgfscope}%
\begin{pgfscope}%
\definecolor{textcolor}{rgb}{0.000000,0.000000,0.000000}%
\pgfsetstrokecolor{textcolor}%
\pgfsetfillcolor{textcolor}%
\pgftext[x=0.283889in, y=0.931740in, left, base]{\color{textcolor}{\rmfamily\fontsize{10.000000}{12.000000}\selectfont\catcode`\^=\active\def^{\ifmmode\sp\else\^{}\fi}\catcode`\%=\active\def
\end{pgfscope}%
\begin{pgfscope}%
\pgfpathrectangle{\pgfqpoint{0.520001in}{0.240679in}}{\pgfqpoint{1.937500in}{1.886500in}}%
\pgfusepath{clip}%
\pgfsetrectcap%
\pgfsetroundjoin%
\pgfsetlinewidth{0.250937pt}%
\definecolor{currentstroke}{rgb}{0.000000,0.000000,0.000000}%
\pgfsetstrokecolor{currentstroke}%
\pgfsetstrokeopacity{0.200000}%
\pgfsetdash{}{0pt}%
\pgfpathmoveto{\pgfqpoint{0.520001in}{1.353939in}}%
\pgfpathlineto{\pgfqpoint{2.457501in}{1.353939in}}%
\pgfusepath{stroke}%
\end{pgfscope}%
\begin{pgfscope}%
\pgfsetbuttcap%
\pgfsetroundjoin%
\definecolor{currentfill}{rgb}{0.000000,0.000000,0.000000}%
\pgfsetfillcolor{currentfill}%
\pgfsetlinewidth{0.803000pt}%
\definecolor{currentstroke}{rgb}{0.000000,0.000000,0.000000}%
\pgfsetstrokecolor{currentstroke}%
\pgfsetdash{}{0pt}%
\pgfsys@defobject{currentmarker}{\pgfqpoint{-0.048611in}{0.000000in}}{\pgfqpoint{-0.000000in}{0.000000in}}{%
\pgfpathmoveto{\pgfqpoint{-0.000000in}{0.000000in}}%
\pgfpathlineto{\pgfqpoint{-0.048611in}{0.000000in}}%
\pgfusepath{stroke,fill}%
}%
\begin{pgfscope}%
\pgfsys@transformshift{0.520001in}{1.353939in}%
\pgfsys@useobject{currentmarker}{}%
\end{pgfscope}%
\end{pgfscope}%
\begin{pgfscope}%
\definecolor{textcolor}{rgb}{0.000000,0.000000,0.000000}%
\pgfsetstrokecolor{textcolor}%
\pgfsetfillcolor{textcolor}%
\pgftext[x=0.283889in, y=1.305713in, left, base]{\color{textcolor}{\rmfamily\fontsize{10.000000}{12.000000}\selectfont\catcode`\^=\active\def^{\ifmmode\sp\else\^{}\fi}\catcode`\%=\active\def
\end{pgfscope}%
\begin{pgfscope}%
\pgfpathrectangle{\pgfqpoint{0.520001in}{0.240679in}}{\pgfqpoint{1.937500in}{1.886500in}}%
\pgfusepath{clip}%
\pgfsetrectcap%
\pgfsetroundjoin%
\pgfsetlinewidth{0.250937pt}%
\definecolor{currentstroke}{rgb}{0.000000,0.000000,0.000000}%
\pgfsetstrokecolor{currentstroke}%
\pgfsetstrokeopacity{0.200000}%
\pgfsetdash{}{0pt}%
\pgfpathmoveto{\pgfqpoint{0.520001in}{1.727912in}}%
\pgfpathlineto{\pgfqpoint{2.457501in}{1.727912in}}%
\pgfusepath{stroke}%
\end{pgfscope}%
\begin{pgfscope}%
\pgfsetbuttcap%
\pgfsetroundjoin%
\definecolor{currentfill}{rgb}{0.000000,0.000000,0.000000}%
\pgfsetfillcolor{currentfill}%
\pgfsetlinewidth{0.803000pt}%
\definecolor{currentstroke}{rgb}{0.000000,0.000000,0.000000}%
\pgfsetstrokecolor{currentstroke}%
\pgfsetdash{}{0pt}%
\pgfsys@defobject{currentmarker}{\pgfqpoint{-0.048611in}{0.000000in}}{\pgfqpoint{-0.000000in}{0.000000in}}{%
\pgfpathmoveto{\pgfqpoint{-0.000000in}{0.000000in}}%
\pgfpathlineto{\pgfqpoint{-0.048611in}{0.000000in}}%
\pgfusepath{stroke,fill}%
}%
\begin{pgfscope}%
\pgfsys@transformshift{0.520001in}{1.727912in}%
\pgfsys@useobject{currentmarker}{}%
\end{pgfscope}%
\end{pgfscope}%
\begin{pgfscope}%
\definecolor{textcolor}{rgb}{0.000000,0.000000,0.000000}%
\pgfsetstrokecolor{textcolor}%
\pgfsetfillcolor{textcolor}%
\pgftext[x=0.214444in, y=1.679687in, left, base]{\color{textcolor}{\rmfamily\fontsize{10.000000}{12.000000}\selectfont\catcode`\^=\active\def^{\ifmmode\sp\else\^{}\fi}\catcode`\%=\active\def
\end{pgfscope}%
\begin{pgfscope}%
\pgfpathrectangle{\pgfqpoint{0.520001in}{0.240679in}}{\pgfqpoint{1.937500in}{1.886500in}}%
\pgfusepath{clip}%
\pgfsetrectcap%
\pgfsetroundjoin%
\pgfsetlinewidth{0.250937pt}%
\definecolor{currentstroke}{rgb}{0.000000,0.000000,0.000000}%
\pgfsetstrokecolor{currentstroke}%
\pgfsetstrokeopacity{0.200000}%
\pgfsetdash{}{0pt}%
\pgfpathmoveto{\pgfqpoint{0.520001in}{2.101886in}}%
\pgfpathlineto{\pgfqpoint{2.457501in}{2.101886in}}%
\pgfusepath{stroke}%
\end{pgfscope}%
\begin{pgfscope}%
\pgfsetbuttcap%
\pgfsetroundjoin%
\definecolor{currentfill}{rgb}{0.000000,0.000000,0.000000}%
\pgfsetfillcolor{currentfill}%
\pgfsetlinewidth{0.803000pt}%
\definecolor{currentstroke}{rgb}{0.000000,0.000000,0.000000}%
\pgfsetstrokecolor{currentstroke}%
\pgfsetdash{}{0pt}%
\pgfsys@defobject{currentmarker}{\pgfqpoint{-0.048611in}{0.000000in}}{\pgfqpoint{-0.000000in}{0.000000in}}{%
\pgfpathmoveto{\pgfqpoint{-0.000000in}{0.000000in}}%
\pgfpathlineto{\pgfqpoint{-0.048611in}{0.000000in}}%
\pgfusepath{stroke,fill}%
}%
\begin{pgfscope}%
\pgfsys@transformshift{0.520001in}{2.101886in}%
\pgfsys@useobject{currentmarker}{}%
\end{pgfscope}%
\end{pgfscope}%
\begin{pgfscope}%
\definecolor{textcolor}{rgb}{0.000000,0.000000,0.000000}%
\pgfsetstrokecolor{textcolor}%
\pgfsetfillcolor{textcolor}%
\pgftext[x=0.214444in, y=2.053661in, left, base]{\color{textcolor}{\rmfamily\fontsize{10.000000}{12.000000}\selectfont\catcode`\^=\active\def^{\ifmmode\sp\else\^{}\fi}\catcode`\%=\active\def
\end{pgfscope}%
\begin{pgfscope}%
\definecolor{textcolor}{rgb}{0.000000,0.000000,0.000000}%
\pgfsetstrokecolor{textcolor}%
\pgfsetfillcolor{textcolor}%
\pgftext[x=0.158889in,y=1.183929in,,bottom,rotate=90.000000]{\color{textcolor}{\rmfamily\fontsize{10.000000}{12.000000}\selectfont\catcode`\^=\active\def^{\ifmmode\sp\else\^{}\fi}\catcode`\%=\active\def
\end{pgfscope}%
\begin{pgfscope}%
\pgfpathrectangle{\pgfqpoint{0.520001in}{0.240679in}}{\pgfqpoint{1.937500in}{1.886500in}}%
\pgfusepath{clip}%
\pgfsetbuttcap%
\pgfsetmiterjoin%
\definecolor{currentfill}{rgb}{1.000000,0.690196,0.000000}%
\pgfsetfillcolor{currentfill}%
\pgfsetlinewidth{1.003750pt}%
\definecolor{currentstroke}{rgb}{0.000000,0.000000,0.000000}%
\pgfsetstrokecolor{currentstroke}%
\pgfsetdash{}{0pt}%
\pgfpathmoveto{\pgfqpoint{0.746042in}{1.539346in}}%
\pgfpathlineto{\pgfqpoint{0.939792in}{1.539346in}}%
\pgfpathlineto{\pgfqpoint{0.939792in}{1.571821in}}%
\pgfpathlineto{\pgfqpoint{0.746042in}{1.571821in}}%
\pgfpathlineto{\pgfqpoint{0.746042in}{1.539346in}}%
\pgfpathlineto{\pgfqpoint{0.746042in}{1.539346in}}%
\pgfpathclose%
\pgfusepath{stroke,fill}%
\end{pgfscope}%
\begin{pgfscope}%
\pgfpathrectangle{\pgfqpoint{0.520001in}{0.240679in}}{\pgfqpoint{1.937500in}{1.886500in}}%
\pgfusepath{clip}%
\pgfsetrectcap%
\pgfsetroundjoin%
\pgfsetlinewidth{1.003750pt}%
\definecolor{currentstroke}{rgb}{0.000000,0.000000,0.000000}%
\pgfsetstrokecolor{currentstroke}%
\pgfsetdash{}{0pt}%
\pgfpathmoveto{\pgfqpoint{0.842917in}{1.539346in}}%
\pgfpathlineto{\pgfqpoint{0.842917in}{1.492051in}}%
\pgfusepath{stroke}%
\end{pgfscope}%
\begin{pgfscope}%
\pgfpathrectangle{\pgfqpoint{0.520001in}{0.240679in}}{\pgfqpoint{1.937500in}{1.886500in}}%
\pgfusepath{clip}%
\pgfsetrectcap%
\pgfsetroundjoin%
\pgfsetlinewidth{1.003750pt}%
\definecolor{currentstroke}{rgb}{0.000000,0.000000,0.000000}%
\pgfsetstrokecolor{currentstroke}%
\pgfsetdash{}{0pt}%
\pgfpathmoveto{\pgfqpoint{0.842917in}{1.571821in}}%
\pgfpathlineto{\pgfqpoint{0.842917in}{1.612817in}}%
\pgfusepath{stroke}%
\end{pgfscope}%
\begin{pgfscope}%
\pgfpathrectangle{\pgfqpoint{0.520001in}{0.240679in}}{\pgfqpoint{1.937500in}{1.886500in}}%
\pgfusepath{clip}%
\pgfsetrectcap%
\pgfsetroundjoin%
\pgfsetlinewidth{1.003750pt}%
\definecolor{currentstroke}{rgb}{0.000000,0.000000,0.000000}%
\pgfsetstrokecolor{currentstroke}%
\pgfsetdash{}{0pt}%
\pgfpathmoveto{\pgfqpoint{0.794480in}{1.492051in}}%
\pgfpathlineto{\pgfqpoint{0.891355in}{1.492051in}}%
\pgfusepath{stroke}%
\end{pgfscope}%
\begin{pgfscope}%
\pgfpathrectangle{\pgfqpoint{0.520001in}{0.240679in}}{\pgfqpoint{1.937500in}{1.886500in}}%
\pgfusepath{clip}%
\pgfsetrectcap%
\pgfsetroundjoin%
\pgfsetlinewidth{1.003750pt}%
\definecolor{currentstroke}{rgb}{0.000000,0.000000,0.000000}%
\pgfsetstrokecolor{currentstroke}%
\pgfsetdash{}{0pt}%
\pgfpathmoveto{\pgfqpoint{0.794480in}{1.612817in}}%
\pgfpathlineto{\pgfqpoint{0.891355in}{1.612817in}}%
\pgfusepath{stroke}%
\end{pgfscope}%
\begin{pgfscope}%
\pgfpathrectangle{\pgfqpoint{0.520001in}{0.240679in}}{\pgfqpoint{1.937500in}{1.886500in}}%
\pgfusepath{clip}%
\pgfsetbuttcap%
\pgfsetroundjoin%
\definecolor{currentfill}{rgb}{0.000000,0.000000,0.000000}%
\pgfsetfillcolor{currentfill}%
\pgfsetfillopacity{0.000000}%
\pgfsetlinewidth{1.003750pt}%
\definecolor{currentstroke}{rgb}{0.000000,0.000000,0.000000}%
\pgfsetstrokecolor{currentstroke}%
\pgfsetdash{}{0pt}%
\pgfsys@defobject{currentmarker}{\pgfqpoint{-0.041667in}{-0.041667in}}{\pgfqpoint{0.041667in}{0.041667in}}{%
\pgfpathmoveto{\pgfqpoint{0.000000in}{-0.041667in}}%
\pgfpathcurveto{\pgfqpoint{0.011050in}{-0.041667in}}{\pgfqpoint{0.021649in}{-0.037276in}}{\pgfqpoint{0.029463in}{-0.029463in}}%
\pgfpathcurveto{\pgfqpoint{0.037276in}{-0.021649in}}{\pgfqpoint{0.041667in}{-0.011050in}}{\pgfqpoint{0.041667in}{0.000000in}}%
\pgfpathcurveto{\pgfqpoint{0.041667in}{0.011050in}}{\pgfqpoint{0.037276in}{0.021649in}}{\pgfqpoint{0.029463in}{0.029463in}}%
\pgfpathcurveto{\pgfqpoint{0.021649in}{0.037276in}}{\pgfqpoint{0.011050in}{0.041667in}}{\pgfqpoint{0.000000in}{0.041667in}}%
\pgfpathcurveto{\pgfqpoint{-0.011050in}{0.041667in}}{\pgfqpoint{-0.021649in}{0.037276in}}{\pgfqpoint{-0.029463in}{0.029463in}}%
\pgfpathcurveto{\pgfqpoint{-0.037276in}{0.021649in}}{\pgfqpoint{-0.041667in}{0.011050in}}{\pgfqpoint{-0.041667in}{0.000000in}}%
\pgfpathcurveto{\pgfqpoint{-0.041667in}{-0.011050in}}{\pgfqpoint{-0.037276in}{-0.021649in}}{\pgfqpoint{-0.029463in}{-0.029463in}}%
\pgfpathcurveto{\pgfqpoint{-0.021649in}{-0.037276in}}{\pgfqpoint{-0.011050in}{-0.041667in}}{\pgfqpoint{0.000000in}{-0.041667in}}%
\pgfpathlineto{\pgfqpoint{0.000000in}{-0.041667in}}%
\pgfpathclose%
\pgfusepath{stroke,fill}%
}%
\begin{pgfscope}%
\pgfsys@transformshift{0.842917in}{1.367187in}%
\pgfsys@useobject{currentmarker}{}%
\end{pgfscope}%
\begin{pgfscope}%
\pgfsys@transformshift{0.842917in}{1.969971in}%
\pgfsys@useobject{currentmarker}{}%
\end{pgfscope}%
\end{pgfscope}%
\begin{pgfscope}%
\pgfpathrectangle{\pgfqpoint{0.520001in}{0.240679in}}{\pgfqpoint{1.937500in}{1.886500in}}%
\pgfusepath{clip}%
\pgfsetbuttcap%
\pgfsetmiterjoin%
\definecolor{currentfill}{rgb}{0.862745,0.149020,0.498039}%
\pgfsetfillcolor{currentfill}%
\pgfsetlinewidth{1.003750pt}%
\definecolor{currentstroke}{rgb}{0.000000,0.000000,0.000000}%
\pgfsetstrokecolor{currentstroke}%
\pgfsetdash{}{0pt}%
\pgfpathmoveto{\pgfqpoint{1.391876in}{0.476872in}}%
\pgfpathlineto{\pgfqpoint{1.585626in}{0.476872in}}%
\pgfpathlineto{\pgfqpoint{1.585626in}{0.571703in}}%
\pgfpathlineto{\pgfqpoint{1.391876in}{0.571703in}}%
\pgfpathlineto{\pgfqpoint{1.391876in}{0.476872in}}%
\pgfpathlineto{\pgfqpoint{1.391876in}{0.476872in}}%
\pgfpathclose%
\pgfusepath{stroke,fill}%
\end{pgfscope}%
\begin{pgfscope}%
\pgfpathrectangle{\pgfqpoint{0.520001in}{0.240679in}}{\pgfqpoint{1.937500in}{1.886500in}}%
\pgfusepath{clip}%
\pgfsetrectcap%
\pgfsetroundjoin%
\pgfsetlinewidth{1.003750pt}%
\definecolor{currentstroke}{rgb}{0.000000,0.000000,0.000000}%
\pgfsetstrokecolor{currentstroke}%
\pgfsetdash{}{0pt}%
\pgfpathmoveto{\pgfqpoint{1.488751in}{0.476872in}}%
\pgfpathlineto{\pgfqpoint{1.488751in}{0.404903in}}%
\pgfusepath{stroke}%
\end{pgfscope}%
\begin{pgfscope}%
\pgfpathrectangle{\pgfqpoint{0.520001in}{0.240679in}}{\pgfqpoint{1.937500in}{1.886500in}}%
\pgfusepath{clip}%
\pgfsetrectcap%
\pgfsetroundjoin%
\pgfsetlinewidth{1.003750pt}%
\definecolor{currentstroke}{rgb}{0.000000,0.000000,0.000000}%
\pgfsetstrokecolor{currentstroke}%
\pgfsetdash{}{0pt}%
\pgfpathmoveto{\pgfqpoint{1.488751in}{0.571703in}}%
\pgfpathlineto{\pgfqpoint{1.488751in}{0.630522in}}%
\pgfusepath{stroke}%
\end{pgfscope}%
\begin{pgfscope}%
\pgfpathrectangle{\pgfqpoint{0.520001in}{0.240679in}}{\pgfqpoint{1.937500in}{1.886500in}}%
\pgfusepath{clip}%
\pgfsetrectcap%
\pgfsetroundjoin%
\pgfsetlinewidth{1.003750pt}%
\definecolor{currentstroke}{rgb}{0.000000,0.000000,0.000000}%
\pgfsetstrokecolor{currentstroke}%
\pgfsetdash{}{0pt}%
\pgfpathmoveto{\pgfqpoint{1.440313in}{0.404903in}}%
\pgfpathlineto{\pgfqpoint{1.537188in}{0.404903in}}%
\pgfusepath{stroke}%
\end{pgfscope}%
\begin{pgfscope}%
\pgfpathrectangle{\pgfqpoint{0.520001in}{0.240679in}}{\pgfqpoint{1.937500in}{1.886500in}}%
\pgfusepath{clip}%
\pgfsetrectcap%
\pgfsetroundjoin%
\pgfsetlinewidth{1.003750pt}%
\definecolor{currentstroke}{rgb}{0.000000,0.000000,0.000000}%
\pgfsetstrokecolor{currentstroke}%
\pgfsetdash{}{0pt}%
\pgfpathmoveto{\pgfqpoint{1.440313in}{0.630522in}}%
\pgfpathlineto{\pgfqpoint{1.537188in}{0.630522in}}%
\pgfusepath{stroke}%
\end{pgfscope}%
\begin{pgfscope}%
\pgfpathrectangle{\pgfqpoint{0.520001in}{0.240679in}}{\pgfqpoint{1.937500in}{1.886500in}}%
\pgfusepath{clip}%
\pgfsetbuttcap%
\pgfsetmiterjoin%
\definecolor{currentfill}{rgb}{0.392157,0.560784,1.000000}%
\pgfsetfillcolor{currentfill}%
\pgfsetlinewidth{1.003750pt}%
\definecolor{currentstroke}{rgb}{0.000000,0.000000,0.000000}%
\pgfsetstrokecolor{currentstroke}%
\pgfsetdash{}{0pt}%
\pgfpathmoveto{\pgfqpoint{2.037709in}{0.400268in}}%
\pgfpathlineto{\pgfqpoint{2.231459in}{0.400268in}}%
\pgfpathlineto{\pgfqpoint{2.231459in}{0.409010in}}%
\pgfpathlineto{\pgfqpoint{2.037709in}{0.409010in}}%
\pgfpathlineto{\pgfqpoint{2.037709in}{0.400268in}}%
\pgfpathlineto{\pgfqpoint{2.037709in}{0.400268in}}%
\pgfpathclose%
\pgfusepath{stroke,fill}%
\end{pgfscope}%
\begin{pgfscope}%
\pgfpathrectangle{\pgfqpoint{0.520001in}{0.240679in}}{\pgfqpoint{1.937500in}{1.886500in}}%
\pgfusepath{clip}%
\pgfsetrectcap%
\pgfsetroundjoin%
\pgfsetlinewidth{1.003750pt}%
\definecolor{currentstroke}{rgb}{0.000000,0.000000,0.000000}%
\pgfsetstrokecolor{currentstroke}%
\pgfsetdash{}{0pt}%
\pgfpathmoveto{\pgfqpoint{2.134584in}{0.400268in}}%
\pgfpathlineto{\pgfqpoint{2.134584in}{0.397887in}}%
\pgfusepath{stroke}%
\end{pgfscope}%
\begin{pgfscope}%
\pgfpathrectangle{\pgfqpoint{0.520001in}{0.240679in}}{\pgfqpoint{1.937500in}{1.886500in}}%
\pgfusepath{clip}%
\pgfsetrectcap%
\pgfsetroundjoin%
\pgfsetlinewidth{1.003750pt}%
\definecolor{currentstroke}{rgb}{0.000000,0.000000,0.000000}%
\pgfsetstrokecolor{currentstroke}%
\pgfsetdash{}{0pt}%
\pgfpathmoveto{\pgfqpoint{2.134584in}{0.409010in}}%
\pgfpathlineto{\pgfqpoint{2.134584in}{0.417830in}}%
\pgfusepath{stroke}%
\end{pgfscope}%
\begin{pgfscope}%
\pgfpathrectangle{\pgfqpoint{0.520001in}{0.240679in}}{\pgfqpoint{1.937500in}{1.886500in}}%
\pgfusepath{clip}%
\pgfsetrectcap%
\pgfsetroundjoin%
\pgfsetlinewidth{1.003750pt}%
\definecolor{currentstroke}{rgb}{0.000000,0.000000,0.000000}%
\pgfsetstrokecolor{currentstroke}%
\pgfsetdash{}{0pt}%
\pgfpathmoveto{\pgfqpoint{2.086147in}{0.397887in}}%
\pgfpathlineto{\pgfqpoint{2.183022in}{0.397887in}}%
\pgfusepath{stroke}%
\end{pgfscope}%
\begin{pgfscope}%
\pgfpathrectangle{\pgfqpoint{0.520001in}{0.240679in}}{\pgfqpoint{1.937500in}{1.886500in}}%
\pgfusepath{clip}%
\pgfsetrectcap%
\pgfsetroundjoin%
\pgfsetlinewidth{1.003750pt}%
\definecolor{currentstroke}{rgb}{0.000000,0.000000,0.000000}%
\pgfsetstrokecolor{currentstroke}%
\pgfsetdash{}{0pt}%
\pgfpathmoveto{\pgfqpoint{2.086147in}{0.417830in}}%
\pgfpathlineto{\pgfqpoint{2.183022in}{0.417830in}}%
\pgfusepath{stroke}%
\end{pgfscope}%
\begin{pgfscope}%
\pgfpathrectangle{\pgfqpoint{0.520001in}{0.240679in}}{\pgfqpoint{1.937500in}{1.886500in}}%
\pgfusepath{clip}%
\pgfsetbuttcap%
\pgfsetroundjoin%
\definecolor{currentfill}{rgb}{0.000000,0.000000,0.000000}%
\pgfsetfillcolor{currentfill}%
\pgfsetfillopacity{0.000000}%
\pgfsetlinewidth{1.003750pt}%
\definecolor{currentstroke}{rgb}{0.000000,0.000000,0.000000}%
\pgfsetstrokecolor{currentstroke}%
\pgfsetdash{}{0pt}%
\pgfsys@defobject{currentmarker}{\pgfqpoint{-0.041667in}{-0.041667in}}{\pgfqpoint{0.041667in}{0.041667in}}{%
\pgfpathmoveto{\pgfqpoint{0.000000in}{-0.041667in}}%
\pgfpathcurveto{\pgfqpoint{0.011050in}{-0.041667in}}{\pgfqpoint{0.021649in}{-0.037276in}}{\pgfqpoint{0.029463in}{-0.029463in}}%
\pgfpathcurveto{\pgfqpoint{0.037276in}{-0.021649in}}{\pgfqpoint{0.041667in}{-0.011050in}}{\pgfqpoint{0.041667in}{0.000000in}}%
\pgfpathcurveto{\pgfqpoint{0.041667in}{0.011050in}}{\pgfqpoint{0.037276in}{0.021649in}}{\pgfqpoint{0.029463in}{0.029463in}}%
\pgfpathcurveto{\pgfqpoint{0.021649in}{0.037276in}}{\pgfqpoint{0.011050in}{0.041667in}}{\pgfqpoint{0.000000in}{0.041667in}}%
\pgfpathcurveto{\pgfqpoint{-0.011050in}{0.041667in}}{\pgfqpoint{-0.021649in}{0.037276in}}{\pgfqpoint{-0.029463in}{0.029463in}}%
\pgfpathcurveto{\pgfqpoint{-0.037276in}{0.021649in}}{\pgfqpoint{-0.041667in}{0.011050in}}{\pgfqpoint{-0.041667in}{0.000000in}}%
\pgfpathcurveto{\pgfqpoint{-0.041667in}{-0.011050in}}{\pgfqpoint{-0.037276in}{-0.021649in}}{\pgfqpoint{-0.029463in}{-0.029463in}}%
\pgfpathcurveto{\pgfqpoint{-0.021649in}{-0.037276in}}{\pgfqpoint{-0.011050in}{-0.041667in}}{\pgfqpoint{0.000000in}{-0.041667in}}%
\pgfpathlineto{\pgfqpoint{0.000000in}{-0.041667in}}%
\pgfpathclose%
\pgfusepath{stroke,fill}%
}%
\begin{pgfscope}%
\pgfsys@transformshift{2.134584in}{0.447565in}%
\pgfsys@useobject{currentmarker}{}%
\end{pgfscope}%
\end{pgfscope}%
\begin{pgfscope}%
\pgfpathrectangle{\pgfqpoint{0.520001in}{0.240679in}}{\pgfqpoint{1.937500in}{1.886500in}}%
\pgfusepath{clip}%
\pgfsetbuttcap%
\pgfsetroundjoin%
\pgfsetlinewidth{1.003750pt}%
\definecolor{currentstroke}{rgb}{0.000000,0.000000,0.000000}%
\pgfsetstrokecolor{currentstroke}%
\pgfsetdash{}{0pt}%
\pgfpathmoveto{\pgfqpoint{0.746042in}{1.554317in}}%
\pgfpathlineto{\pgfqpoint{0.939792in}{1.554317in}}%
\pgfusepath{stroke}%
\end{pgfscope}%
\begin{pgfscope}%
\pgfpathrectangle{\pgfqpoint{0.520001in}{0.240679in}}{\pgfqpoint{1.937500in}{1.886500in}}%
\pgfusepath{clip}%
\pgfsetbuttcap%
\pgfsetroundjoin%
\pgfsetlinewidth{1.003750pt}%
\definecolor{currentstroke}{rgb}{0.000000,0.000000,0.000000}%
\pgfsetstrokecolor{currentstroke}%
\pgfsetdash{}{0pt}%
\pgfpathmoveto{\pgfqpoint{1.391876in}{0.515834in}}%
\pgfpathlineto{\pgfqpoint{1.585626in}{0.515834in}}%
\pgfusepath{stroke}%
\end{pgfscope}%
\begin{pgfscope}%
\pgfpathrectangle{\pgfqpoint{0.520001in}{0.240679in}}{\pgfqpoint{1.937500in}{1.886500in}}%
\pgfusepath{clip}%
\pgfsetbuttcap%
\pgfsetroundjoin%
\pgfsetlinewidth{1.003750pt}%
\definecolor{currentstroke}{rgb}{0.000000,0.000000,0.000000}%
\pgfsetstrokecolor{currentstroke}%
\pgfsetdash{}{0pt}%
\pgfpathmoveto{\pgfqpoint{2.037709in}{0.402802in}}%
\pgfpathlineto{\pgfqpoint{2.231459in}{0.402802in}}%
\pgfusepath{stroke}%
\end{pgfscope}%
\begin{pgfscope}%
\pgfsetrectcap%
\pgfsetmiterjoin%
\pgfsetlinewidth{0.803000pt}%
\definecolor{currentstroke}{rgb}{0.000000,0.000000,0.000000}%
\pgfsetstrokecolor{currentstroke}%
\pgfsetdash{}{0pt}%
\pgfpathmoveto{\pgfqpoint{0.520001in}{0.240679in}}%
\pgfpathlineto{\pgfqpoint{0.520001in}{2.127179in}}%
\pgfusepath{stroke}%
\end{pgfscope}%
\begin{pgfscope}%
\pgfsetrectcap%
\pgfsetmiterjoin%
\pgfsetlinewidth{0.803000pt}%
\definecolor{currentstroke}{rgb}{0.000000,0.000000,0.000000}%
\pgfsetstrokecolor{currentstroke}%
\pgfsetdash{}{0pt}%
\pgfpathmoveto{\pgfqpoint{2.457501in}{0.240679in}}%
\pgfpathlineto{\pgfqpoint{2.457501in}{2.127179in}}%
\pgfusepath{stroke}%
\end{pgfscope}%
\begin{pgfscope}%
\pgfsetrectcap%
\pgfsetmiterjoin%
\pgfsetlinewidth{0.803000pt}%
\definecolor{currentstroke}{rgb}{0.000000,0.000000,0.000000}%
\pgfsetstrokecolor{currentstroke}%
\pgfsetdash{}{0pt}%
\pgfpathmoveto{\pgfqpoint{0.520001in}{0.240679in}}%
\pgfpathlineto{\pgfqpoint{2.457501in}{0.240679in}}%
\pgfusepath{stroke}%
\end{pgfscope}%
\begin{pgfscope}%
\pgfsetrectcap%
\pgfsetmiterjoin%
\pgfsetlinewidth{0.803000pt}%
\definecolor{currentstroke}{rgb}{0.000000,0.000000,0.000000}%
\pgfsetstrokecolor{currentstroke}%
\pgfsetdash{}{0pt}%
\pgfpathmoveto{\pgfqpoint{0.520001in}{2.127179in}}%
\pgfpathlineto{\pgfqpoint{2.457501in}{2.127179in}}%
\pgfusepath{stroke}%
\end{pgfscope}%
\end{pgfpicture}%
\makeatother%
\endgroup%

%% file: plots/comparison_large_snr_fits.pgf
\begingroup%
\makeatletter%
\begin{pgfpicture}%
\pgfpathrectangle{\pgfpointorigin}{\pgfqpoint{2.462068in}{2.195404in}}%
\pgfusepath{use as bounding box, clip}%
\begin{pgfscope}%
\pgfsetbuttcap%
\pgfsetmiterjoin%
\definecolor{currentfill}{rgb}{1.000000,1.000000,1.000000}%
\pgfsetfillcolor{currentfill}%
\pgfsetlinewidth{0.000000pt}%
\definecolor{currentstroke}{rgb}{1.000000,1.000000,1.000000}%
\pgfsetstrokecolor{currentstroke}%
\pgfsetdash{}{0pt}%
\pgfpathmoveto{\pgfqpoint{0.000000in}{0.000000in}}%
\pgfpathlineto{\pgfqpoint{2.462068in}{0.000000in}}%
\pgfpathlineto{\pgfqpoint{2.462068in}{2.195404in}}%
\pgfpathlineto{\pgfqpoint{0.000000in}{2.195404in}}%
\pgfpathlineto{\pgfqpoint{0.000000in}{0.000000in}}%
\pgfpathclose%
\pgfusepath{fill}%
\end{pgfscope}%
\begin{pgfscope}%
\pgfsetbuttcap%
\pgfsetmiterjoin%
\definecolor{currentfill}{rgb}{1.000000,1.000000,1.000000}%
\pgfsetfillcolor{currentfill}%
\pgfsetlinewidth{0.000000pt}%
\definecolor{currentstroke}{rgb}{0.000000,0.000000,0.000000}%
\pgfsetstrokecolor{currentstroke}%
\pgfsetstrokeopacity{0.000000}%
\pgfsetdash{}{0pt}%
\pgfpathmoveto{\pgfqpoint{0.504568in}{0.240679in}}%
\pgfpathlineto{\pgfqpoint{2.442068in}{0.240679in}}%
\pgfpathlineto{\pgfqpoint{2.442068in}{2.127179in}}%
\pgfpathlineto{\pgfqpoint{0.504568in}{2.127179in}}%
\pgfpathlineto{\pgfqpoint{0.504568in}{0.240679in}}%
\pgfpathclose%
\pgfusepath{fill}%
\end{pgfscope}%
\begin{pgfscope}%
\pgfpathrectangle{\pgfqpoint{0.504568in}{0.240679in}}{\pgfqpoint{1.937500in}{1.886500in}}%
\pgfusepath{clip}%
\pgfsetrectcap%
\pgfsetroundjoin%
\pgfsetlinewidth{0.250937pt}%
\definecolor{currentstroke}{rgb}{0.000000,0.000000,0.000000}%
\pgfsetstrokecolor{currentstroke}%
\pgfsetstrokeopacity{0.200000}%
\pgfsetdash{}{0pt}%
\pgfpathmoveto{\pgfqpoint{0.827485in}{0.240679in}}%
\pgfpathlineto{\pgfqpoint{0.827485in}{2.127179in}}%
\pgfusepath{stroke}%
\end{pgfscope}%
\begin{pgfscope}%
\pgfsetbuttcap%
\pgfsetroundjoin%
\definecolor{currentfill}{rgb}{0.000000,0.000000,0.000000}%
\pgfsetfillcolor{currentfill}%
\pgfsetlinewidth{0.803000pt}%
\definecolor{currentstroke}{rgb}{0.000000,0.000000,0.000000}%
\pgfsetstrokecolor{currentstroke}%
\pgfsetdash{}{0pt}%
\pgfsys@defobject{currentmarker}{\pgfqpoint{0.000000in}{-0.048611in}}{\pgfqpoint{0.000000in}{0.000000in}}{%
\pgfpathmoveto{\pgfqpoint{0.000000in}{0.000000in}}%
\pgfpathlineto{\pgfqpoint{0.000000in}{-0.048611in}}%
\pgfusepath{stroke,fill}%
}%
\begin{pgfscope}%
\pgfsys@transformshift{0.827485in}{0.240679in}%
\pgfsys@useobject{currentmarker}{}%
\end{pgfscope}%
\end{pgfscope}%
\begin{pgfscope}%
\definecolor{textcolor}{rgb}{0.000000,0.000000,0.000000}%
\pgfsetstrokecolor{textcolor}%
\pgfsetfillcolor{textcolor}%
\pgftext[x=0.827485in,y=0.143457in,,top]{\color{textcolor}{\rmfamily\fontsize{10.000000}{12.000000}\selectfont\catcode`\^=\active\def^{\ifmmode\sp\else\^{}\fi}\catcode`\%=\active\def
\end{pgfscope}%
\begin{pgfscope}%
\pgfpathrectangle{\pgfqpoint{0.504568in}{0.240679in}}{\pgfqpoint{1.937500in}{1.886500in}}%
\pgfusepath{clip}%
\pgfsetrectcap%
\pgfsetroundjoin%
\pgfsetlinewidth{0.250937pt}%
\definecolor{currentstroke}{rgb}{0.000000,0.000000,0.000000}%
\pgfsetstrokecolor{currentstroke}%
\pgfsetstrokeopacity{0.200000}%
\pgfsetdash{}{0pt}%
\pgfpathmoveto{\pgfqpoint{1.473318in}{0.240679in}}%
\pgfpathlineto{\pgfqpoint{1.473318in}{2.127179in}}%
\pgfusepath{stroke}%
\end{pgfscope}%
\begin{pgfscope}%
\pgfsetbuttcap%
\pgfsetroundjoin%
\definecolor{currentfill}{rgb}{0.000000,0.000000,0.000000}%
\pgfsetfillcolor{currentfill}%
\pgfsetlinewidth{0.803000pt}%
\definecolor{currentstroke}{rgb}{0.000000,0.000000,0.000000}%
\pgfsetstrokecolor{currentstroke}%
\pgfsetdash{}{0pt}%
\pgfsys@defobject{currentmarker}{\pgfqpoint{0.000000in}{-0.048611in}}{\pgfqpoint{0.000000in}{0.000000in}}{%
\pgfpathmoveto{\pgfqpoint{0.000000in}{0.000000in}}%
\pgfpathlineto{\pgfqpoint{0.000000in}{-0.048611in}}%
\pgfusepath{stroke,fill}%
}%
\begin{pgfscope}%
\pgfsys@transformshift{1.473318in}{0.240679in}%
\pgfsys@useobject{currentmarker}{}%
\end{pgfscope}%
\end{pgfscope}%
\begin{pgfscope}%
\definecolor{textcolor}{rgb}{0.000000,0.000000,0.000000}%
\pgfsetstrokecolor{textcolor}%
\pgfsetfillcolor{textcolor}%
\pgftext[x=1.473318in,y=0.143457in,,top]{\color{textcolor}{\rmfamily\fontsize{10.000000}{12.000000}\selectfont\catcode`\^=\active\def^{\ifmmode\sp\else\^{}\fi}\catcode`\%=\active\def
\end{pgfscope}%
\begin{pgfscope}%
\pgfpathrectangle{\pgfqpoint{0.504568in}{0.240679in}}{\pgfqpoint{1.937500in}{1.886500in}}%
\pgfusepath{clip}%
\pgfsetrectcap%
\pgfsetroundjoin%
\pgfsetlinewidth{0.250937pt}%
\definecolor{currentstroke}{rgb}{0.000000,0.000000,0.000000}%
\pgfsetstrokecolor{currentstroke}%
\pgfsetstrokeopacity{0.200000}%
\pgfsetdash{}{0pt}%
\pgfpathmoveto{\pgfqpoint{2.119152in}{0.240679in}}%
\pgfpathlineto{\pgfqpoint{2.119152in}{2.127179in}}%
\pgfusepath{stroke}%
\end{pgfscope}%
\begin{pgfscope}%
\pgfsetbuttcap%
\pgfsetroundjoin%
\definecolor{currentfill}{rgb}{0.000000,0.000000,0.000000}%
\pgfsetfillcolor{currentfill}%
\pgfsetlinewidth{0.803000pt}%
\definecolor{currentstroke}{rgb}{0.000000,0.000000,0.000000}%
\pgfsetstrokecolor{currentstroke}%
\pgfsetdash{}{0pt}%
\pgfsys@defobject{currentmarker}{\pgfqpoint{0.000000in}{-0.048611in}}{\pgfqpoint{0.000000in}{0.000000in}}{%
\pgfpathmoveto{\pgfqpoint{0.000000in}{0.000000in}}%
\pgfpathlineto{\pgfqpoint{0.000000in}{-0.048611in}}%
\pgfusepath{stroke,fill}%
}%
\begin{pgfscope}%
\pgfsys@transformshift{2.119152in}{0.240679in}%
\pgfsys@useobject{currentmarker}{}%
\end{pgfscope}%
\end{pgfscope}%
\begin{pgfscope}%
\definecolor{textcolor}{rgb}{0.000000,0.000000,0.000000}%
\pgfsetstrokecolor{textcolor}%
\pgfsetfillcolor{textcolor}%
\pgftext[x=2.119152in,y=0.143457in,,top]{\color{textcolor}{\rmfamily\fontsize{10.000000}{12.000000}\selectfont\catcode`\^=\active\def^{\ifmmode\sp\else\^{}\fi}\catcode`\%=\active\def
\end{pgfscope}%
\begin{pgfscope}%
\pgfpathrectangle{\pgfqpoint{0.504568in}{0.240679in}}{\pgfqpoint{1.937500in}{1.886500in}}%
\pgfusepath{clip}%
\pgfsetrectcap%
\pgfsetroundjoin%
\pgfsetlinewidth{0.250937pt}%
\definecolor{currentstroke}{rgb}{0.000000,0.000000,0.000000}%
\pgfsetstrokecolor{currentstroke}%
\pgfsetstrokeopacity{0.200000}%
\pgfsetdash{}{0pt}%
\pgfpathmoveto{\pgfqpoint{0.504568in}{0.240679in}}%
\pgfpathlineto{\pgfqpoint{2.442068in}{0.240679in}}%
\pgfusepath{stroke}%
\end{pgfscope}%
\begin{pgfscope}%
\pgfsetbuttcap%
\pgfsetroundjoin%
\definecolor{currentfill}{rgb}{0.000000,0.000000,0.000000}%
\pgfsetfillcolor{currentfill}%
\pgfsetlinewidth{0.803000pt}%
\definecolor{currentstroke}{rgb}{0.000000,0.000000,0.000000}%
\pgfsetstrokecolor{currentstroke}%
\pgfsetdash{}{0pt}%
\pgfsys@defobject{currentmarker}{\pgfqpoint{-0.048611in}{0.000000in}}{\pgfqpoint{-0.000000in}{0.000000in}}{%
\pgfpathmoveto{\pgfqpoint{-0.000000in}{0.000000in}}%
\pgfpathlineto{\pgfqpoint{-0.048611in}{0.000000in}}%
\pgfusepath{stroke,fill}%
}%
\begin{pgfscope}%
\pgfsys@transformshift{0.504568in}{0.240679in}%
\pgfsys@useobject{currentmarker}{}%
\end{pgfscope}%
\end{pgfscope}%
\begin{pgfscope}%
\definecolor{textcolor}{rgb}{0.000000,0.000000,0.000000}%
\pgfsetstrokecolor{textcolor}%
\pgfsetfillcolor{textcolor}%
\pgftext[x=0.268457in, y=0.192454in, left, base]{\color{textcolor}{\rmfamily\fontsize{10.000000}{12.000000}\selectfont\catcode`\^=\active\def^{\ifmmode\sp\else\^{}\fi}\catcode`\%=\active\def
\end{pgfscope}%
\begin{pgfscope}%
\pgfpathrectangle{\pgfqpoint{0.504568in}{0.240679in}}{\pgfqpoint{1.937500in}{1.886500in}}%
\pgfusepath{clip}%
\pgfsetrectcap%
\pgfsetroundjoin%
\pgfsetlinewidth{0.250937pt}%
\definecolor{currentstroke}{rgb}{0.000000,0.000000,0.000000}%
\pgfsetstrokecolor{currentstroke}%
\pgfsetstrokeopacity{0.200000}%
\pgfsetdash{}{0pt}%
\pgfpathmoveto{\pgfqpoint{0.504568in}{0.555096in}}%
\pgfpathlineto{\pgfqpoint{2.442068in}{0.555096in}}%
\pgfusepath{stroke}%
\end{pgfscope}%
\begin{pgfscope}%
\pgfsetbuttcap%
\pgfsetroundjoin%
\definecolor{currentfill}{rgb}{0.000000,0.000000,0.000000}%
\pgfsetfillcolor{currentfill}%
\pgfsetlinewidth{0.803000pt}%
\definecolor{currentstroke}{rgb}{0.000000,0.000000,0.000000}%
\pgfsetstrokecolor{currentstroke}%
\pgfsetdash{}{0pt}%
\pgfsys@defobject{currentmarker}{\pgfqpoint{-0.048611in}{0.000000in}}{\pgfqpoint{-0.000000in}{0.000000in}}{%
\pgfpathmoveto{\pgfqpoint{-0.000000in}{0.000000in}}%
\pgfpathlineto{\pgfqpoint{-0.048611in}{0.000000in}}%
\pgfusepath{stroke,fill}%
}%
\begin{pgfscope}%
\pgfsys@transformshift{0.504568in}{0.555096in}%
\pgfsys@useobject{currentmarker}{}%
\end{pgfscope}%
\end{pgfscope}%
\begin{pgfscope}%
\definecolor{textcolor}{rgb}{0.000000,0.000000,0.000000}%
\pgfsetstrokecolor{textcolor}%
\pgfsetfillcolor{textcolor}%
\pgftext[x=0.268457in, y=0.506870in, left, base]{\color{textcolor}{\rmfamily\fontsize{10.000000}{12.000000}\selectfont\catcode`\^=\active\def^{\ifmmode\sp\else\^{}\fi}\catcode`\%=\active\def
\end{pgfscope}%
\begin{pgfscope}%
\pgfpathrectangle{\pgfqpoint{0.504568in}{0.240679in}}{\pgfqpoint{1.937500in}{1.886500in}}%
\pgfusepath{clip}%
\pgfsetrectcap%
\pgfsetroundjoin%
\pgfsetlinewidth{0.250937pt}%
\definecolor{currentstroke}{rgb}{0.000000,0.000000,0.000000}%
\pgfsetstrokecolor{currentstroke}%
\pgfsetstrokeopacity{0.200000}%
\pgfsetdash{}{0pt}%
\pgfpathmoveto{\pgfqpoint{0.504568in}{0.869512in}}%
\pgfpathlineto{\pgfqpoint{2.442068in}{0.869512in}}%
\pgfusepath{stroke}%
\end{pgfscope}%
\begin{pgfscope}%
\pgfsetbuttcap%
\pgfsetroundjoin%
\definecolor{currentfill}{rgb}{0.000000,0.000000,0.000000}%
\pgfsetfillcolor{currentfill}%
\pgfsetlinewidth{0.803000pt}%
\definecolor{currentstroke}{rgb}{0.000000,0.000000,0.000000}%
\pgfsetstrokecolor{currentstroke}%
\pgfsetdash{}{0pt}%
\pgfsys@defobject{currentmarker}{\pgfqpoint{-0.048611in}{0.000000in}}{\pgfqpoint{-0.000000in}{0.000000in}}{%
\pgfpathmoveto{\pgfqpoint{-0.000000in}{0.000000in}}%
\pgfpathlineto{\pgfqpoint{-0.048611in}{0.000000in}}%
\pgfusepath{stroke,fill}%
}%
\begin{pgfscope}%
\pgfsys@transformshift{0.504568in}{0.869512in}%
\pgfsys@useobject{currentmarker}{}%
\end{pgfscope}%
\end{pgfscope}%
\begin{pgfscope}%
\definecolor{textcolor}{rgb}{0.000000,0.000000,0.000000}%
\pgfsetstrokecolor{textcolor}%
\pgfsetfillcolor{textcolor}%
\pgftext[x=0.268457in, y=0.821287in, left, base]{\color{textcolor}{\rmfamily\fontsize{10.000000}{12.000000}\selectfont\catcode`\^=\active\def^{\ifmmode\sp\else\^{}\fi}\catcode`\%=\active\def
\end{pgfscope}%
\begin{pgfscope}%
\pgfpathrectangle{\pgfqpoint{0.504568in}{0.240679in}}{\pgfqpoint{1.937500in}{1.886500in}}%
\pgfusepath{clip}%
\pgfsetrectcap%
\pgfsetroundjoin%
\pgfsetlinewidth{0.250937pt}%
\definecolor{currentstroke}{rgb}{0.000000,0.000000,0.000000}%
\pgfsetstrokecolor{currentstroke}%
\pgfsetstrokeopacity{0.200000}%
\pgfsetdash{}{0pt}%
\pgfpathmoveto{\pgfqpoint{0.504568in}{1.183929in}}%
\pgfpathlineto{\pgfqpoint{2.442068in}{1.183929in}}%
\pgfusepath{stroke}%
\end{pgfscope}%
\begin{pgfscope}%
\pgfsetbuttcap%
\pgfsetroundjoin%
\definecolor{currentfill}{rgb}{0.000000,0.000000,0.000000}%
\pgfsetfillcolor{currentfill}%
\pgfsetlinewidth{0.803000pt}%
\definecolor{currentstroke}{rgb}{0.000000,0.000000,0.000000}%
\pgfsetstrokecolor{currentstroke}%
\pgfsetdash{}{0pt}%
\pgfsys@defobject{currentmarker}{\pgfqpoint{-0.048611in}{0.000000in}}{\pgfqpoint{-0.000000in}{0.000000in}}{%
\pgfpathmoveto{\pgfqpoint{-0.000000in}{0.000000in}}%
\pgfpathlineto{\pgfqpoint{-0.048611in}{0.000000in}}%
\pgfusepath{stroke,fill}%
}%
\begin{pgfscope}%
\pgfsys@transformshift{0.504568in}{1.183929in}%
\pgfsys@useobject{currentmarker}{}%
\end{pgfscope}%
\end{pgfscope}%
\begin{pgfscope}%
\definecolor{textcolor}{rgb}{0.000000,0.000000,0.000000}%
\pgfsetstrokecolor{textcolor}%
\pgfsetfillcolor{textcolor}%
\pgftext[x=0.268457in, y=1.135704in, left, base]{\color{textcolor}{\rmfamily\fontsize{10.000000}{12.000000}\selectfont\catcode`\^=\active\def^{\ifmmode\sp\else\^{}\fi}\catcode`\%=\active\def
\end{pgfscope}%
\begin{pgfscope}%
\pgfpathrectangle{\pgfqpoint{0.504568in}{0.240679in}}{\pgfqpoint{1.937500in}{1.886500in}}%
\pgfusepath{clip}%
\pgfsetrectcap%
\pgfsetroundjoin%
\pgfsetlinewidth{0.250937pt}%
\definecolor{currentstroke}{rgb}{0.000000,0.000000,0.000000}%
\pgfsetstrokecolor{currentstroke}%
\pgfsetstrokeopacity{0.200000}%
\pgfsetdash{}{0pt}%
\pgfpathmoveto{\pgfqpoint{0.504568in}{1.498346in}}%
\pgfpathlineto{\pgfqpoint{2.442068in}{1.498346in}}%
\pgfusepath{stroke}%
\end{pgfscope}%
\begin{pgfscope}%
\pgfsetbuttcap%
\pgfsetroundjoin%
\definecolor{currentfill}{rgb}{0.000000,0.000000,0.000000}%
\pgfsetfillcolor{currentfill}%
\pgfsetlinewidth{0.803000pt}%
\definecolor{currentstroke}{rgb}{0.000000,0.000000,0.000000}%
\pgfsetstrokecolor{currentstroke}%
\pgfsetdash{}{0pt}%
\pgfsys@defobject{currentmarker}{\pgfqpoint{-0.048611in}{0.000000in}}{\pgfqpoint{-0.000000in}{0.000000in}}{%
\pgfpathmoveto{\pgfqpoint{-0.000000in}{0.000000in}}%
\pgfpathlineto{\pgfqpoint{-0.048611in}{0.000000in}}%
\pgfusepath{stroke,fill}%
}%
\begin{pgfscope}%
\pgfsys@transformshift{0.504568in}{1.498346in}%
\pgfsys@useobject{currentmarker}{}%
\end{pgfscope}%
\end{pgfscope}%
\begin{pgfscope}%
\definecolor{textcolor}{rgb}{0.000000,0.000000,0.000000}%
\pgfsetstrokecolor{textcolor}%
\pgfsetfillcolor{textcolor}%
\pgftext[x=0.268457in, y=1.450120in, left, base]{\color{textcolor}{\rmfamily\fontsize{10.000000}{12.000000}\selectfont\catcode`\^=\active\def^{\ifmmode\sp\else\^{}\fi}\catcode`\%=\active\def
\end{pgfscope}%
\begin{pgfscope}%
\pgfpathrectangle{\pgfqpoint{0.504568in}{0.240679in}}{\pgfqpoint{1.937500in}{1.886500in}}%
\pgfusepath{clip}%
\pgfsetrectcap%
\pgfsetroundjoin%
\pgfsetlinewidth{0.250937pt}%
\definecolor{currentstroke}{rgb}{0.000000,0.000000,0.000000}%
\pgfsetstrokecolor{currentstroke}%
\pgfsetstrokeopacity{0.200000}%
\pgfsetdash{}{0pt}%
\pgfpathmoveto{\pgfqpoint{0.504568in}{1.812762in}}%
\pgfpathlineto{\pgfqpoint{2.442068in}{1.812762in}}%
\pgfusepath{stroke}%
\end{pgfscope}%
\begin{pgfscope}%
\pgfsetbuttcap%
\pgfsetroundjoin%
\definecolor{currentfill}{rgb}{0.000000,0.000000,0.000000}%
\pgfsetfillcolor{currentfill}%
\pgfsetlinewidth{0.803000pt}%
\definecolor{currentstroke}{rgb}{0.000000,0.000000,0.000000}%
\pgfsetstrokecolor{currentstroke}%
\pgfsetdash{}{0pt}%
\pgfsys@defobject{currentmarker}{\pgfqpoint{-0.048611in}{0.000000in}}{\pgfqpoint{-0.000000in}{0.000000in}}{%
\pgfpathmoveto{\pgfqpoint{-0.000000in}{0.000000in}}%
\pgfpathlineto{\pgfqpoint{-0.048611in}{0.000000in}}%
\pgfusepath{stroke,fill}%
}%
\begin{pgfscope}%
\pgfsys@transformshift{0.504568in}{1.812762in}%
\pgfsys@useobject{currentmarker}{}%
\end{pgfscope}%
\end{pgfscope}%
\begin{pgfscope}%
\definecolor{textcolor}{rgb}{0.000000,0.000000,0.000000}%
\pgfsetstrokecolor{textcolor}%
\pgfsetfillcolor{textcolor}%
\pgftext[x=0.268457in, y=1.764537in, left, base]{\color{textcolor}{\rmfamily\fontsize{10.000000}{12.000000}\selectfont\catcode`\^=\active\def^{\ifmmode\sp\else\^{}\fi}\catcode`\%=\active\def
\end{pgfscope}%
\begin{pgfscope}%
\pgfpathrectangle{\pgfqpoint{0.504568in}{0.240679in}}{\pgfqpoint{1.937500in}{1.886500in}}%
\pgfusepath{clip}%
\pgfsetrectcap%
\pgfsetroundjoin%
\pgfsetlinewidth{0.250937pt}%
\definecolor{currentstroke}{rgb}{0.000000,0.000000,0.000000}%
\pgfsetstrokecolor{currentstroke}%
\pgfsetstrokeopacity{0.200000}%
\pgfsetdash{}{0pt}%
\pgfpathmoveto{\pgfqpoint{0.504568in}{2.127179in}}%
\pgfpathlineto{\pgfqpoint{2.442068in}{2.127179in}}%
\pgfusepath{stroke}%
\end{pgfscope}%
\begin{pgfscope}%
\pgfsetbuttcap%
\pgfsetroundjoin%
\definecolor{currentfill}{rgb}{0.000000,0.000000,0.000000}%
\pgfsetfillcolor{currentfill}%
\pgfsetlinewidth{0.803000pt}%
\definecolor{currentstroke}{rgb}{0.000000,0.000000,0.000000}%
\pgfsetstrokecolor{currentstroke}%
\pgfsetdash{}{0pt}%
\pgfsys@defobject{currentmarker}{\pgfqpoint{-0.048611in}{0.000000in}}{\pgfqpoint{-0.000000in}{0.000000in}}{%
\pgfpathmoveto{\pgfqpoint{-0.000000in}{0.000000in}}%
\pgfpathlineto{\pgfqpoint{-0.048611in}{0.000000in}}%
\pgfusepath{stroke,fill}%
}%
\begin{pgfscope}%
\pgfsys@transformshift{0.504568in}{2.127179in}%
\pgfsys@useobject{currentmarker}{}%
\end{pgfscope}%
\end{pgfscope}%
\begin{pgfscope}%
\definecolor{textcolor}{rgb}{0.000000,0.000000,0.000000}%
\pgfsetstrokecolor{textcolor}%
\pgfsetfillcolor{textcolor}%
\pgftext[x=0.199012in, y=2.078954in, left, base]{\color{textcolor}{\rmfamily\fontsize{10.000000}{12.000000}\selectfont\catcode`\^=\active\def^{\ifmmode\sp\else\^{}\fi}\catcode`\%=\active\def
\end{pgfscope}%
\begin{pgfscope}%
\definecolor{textcolor}{rgb}{0.000000,0.000000,0.000000}%
\pgfsetstrokecolor{textcolor}%
\pgfsetfillcolor{textcolor}%
\pgftext[x=0.143457in,y=1.183929in,,bottom,rotate=90.000000]{\color{textcolor}{\rmfamily\fontsize{10.000000}{12.000000}\selectfont\catcode`\^=\active\def^{\ifmmode\sp\else\^{}\fi}\catcode`\%=\active\def
\end{pgfscope}%
\begin{pgfscope}%
\pgfpathrectangle{\pgfqpoint{0.504568in}{0.240679in}}{\pgfqpoint{1.937500in}{1.886500in}}%
\pgfusepath{clip}%
\pgfsetbuttcap%
\pgfsetmiterjoin%
\definecolor{currentfill}{rgb}{1.000000,0.690196,0.000000}%
\pgfsetfillcolor{currentfill}%
\pgfsetlinewidth{1.003750pt}%
\definecolor{currentstroke}{rgb}{0.000000,0.000000,0.000000}%
\pgfsetstrokecolor{currentstroke}%
\pgfsetdash{}{0pt}%
\pgfpathmoveto{\pgfqpoint{0.730610in}{1.715589in}}%
\pgfpathlineto{\pgfqpoint{0.924360in}{1.715589in}}%
\pgfpathlineto{\pgfqpoint{0.924360in}{1.865358in}}%
\pgfpathlineto{\pgfqpoint{0.730610in}{1.865358in}}%
\pgfpathlineto{\pgfqpoint{0.730610in}{1.715589in}}%
\pgfpathlineto{\pgfqpoint{0.730610in}{1.715589in}}%
\pgfpathclose%
\pgfusepath{stroke,fill}%
\end{pgfscope}%
\begin{pgfscope}%
\pgfpathrectangle{\pgfqpoint{0.504568in}{0.240679in}}{\pgfqpoint{1.937500in}{1.886500in}}%
\pgfusepath{clip}%
\pgfsetrectcap%
\pgfsetroundjoin%
\pgfsetlinewidth{1.003750pt}%
\definecolor{currentstroke}{rgb}{0.000000,0.000000,0.000000}%
\pgfsetstrokecolor{currentstroke}%
\pgfsetdash{}{0pt}%
\pgfpathmoveto{\pgfqpoint{0.827485in}{1.715589in}}%
\pgfpathlineto{\pgfqpoint{0.827485in}{1.685458in}}%
\pgfusepath{stroke}%
\end{pgfscope}%
\begin{pgfscope}%
\pgfpathrectangle{\pgfqpoint{0.504568in}{0.240679in}}{\pgfqpoint{1.937500in}{1.886500in}}%
\pgfusepath{clip}%
\pgfsetrectcap%
\pgfsetroundjoin%
\pgfsetlinewidth{1.003750pt}%
\definecolor{currentstroke}{rgb}{0.000000,0.000000,0.000000}%
\pgfsetstrokecolor{currentstroke}%
\pgfsetdash{}{0pt}%
\pgfpathmoveto{\pgfqpoint{0.827485in}{1.865358in}}%
\pgfpathlineto{\pgfqpoint{0.827485in}{2.017065in}}%
\pgfusepath{stroke}%
\end{pgfscope}%
\begin{pgfscope}%
\pgfpathrectangle{\pgfqpoint{0.504568in}{0.240679in}}{\pgfqpoint{1.937500in}{1.886500in}}%
\pgfusepath{clip}%
\pgfsetrectcap%
\pgfsetroundjoin%
\pgfsetlinewidth{1.003750pt}%
\definecolor{currentstroke}{rgb}{0.000000,0.000000,0.000000}%
\pgfsetstrokecolor{currentstroke}%
\pgfsetdash{}{0pt}%
\pgfpathmoveto{\pgfqpoint{0.779048in}{1.685458in}}%
\pgfpathlineto{\pgfqpoint{0.875923in}{1.685458in}}%
\pgfusepath{stroke}%
\end{pgfscope}%
\begin{pgfscope}%
\pgfpathrectangle{\pgfqpoint{0.504568in}{0.240679in}}{\pgfqpoint{1.937500in}{1.886500in}}%
\pgfusepath{clip}%
\pgfsetrectcap%
\pgfsetroundjoin%
\pgfsetlinewidth{1.003750pt}%
\definecolor{currentstroke}{rgb}{0.000000,0.000000,0.000000}%
\pgfsetstrokecolor{currentstroke}%
\pgfsetdash{}{0pt}%
\pgfpathmoveto{\pgfqpoint{0.779048in}{2.017065in}}%
\pgfpathlineto{\pgfqpoint{0.875923in}{2.017065in}}%
\pgfusepath{stroke}%
\end{pgfscope}%
\begin{pgfscope}%
\pgfpathrectangle{\pgfqpoint{0.504568in}{0.240679in}}{\pgfqpoint{1.937500in}{1.886500in}}%
\pgfusepath{clip}%
\pgfsetbuttcap%
\pgfsetroundjoin%
\definecolor{currentfill}{rgb}{0.000000,0.000000,0.000000}%
\pgfsetfillcolor{currentfill}%
\pgfsetfillopacity{0.000000}%
\pgfsetlinewidth{1.003750pt}%
\definecolor{currentstroke}{rgb}{0.000000,0.000000,0.000000}%
\pgfsetstrokecolor{currentstroke}%
\pgfsetdash{}{0pt}%
\pgfsys@defobject{currentmarker}{\pgfqpoint{-0.041667in}{-0.041667in}}{\pgfqpoint{0.041667in}{0.041667in}}{%
\pgfpathmoveto{\pgfqpoint{0.000000in}{-0.041667in}}%
\pgfpathcurveto{\pgfqpoint{0.011050in}{-0.041667in}}{\pgfqpoint{0.021649in}{-0.037276in}}{\pgfqpoint{0.029463in}{-0.029463in}}%
\pgfpathcurveto{\pgfqpoint{0.037276in}{-0.021649in}}{\pgfqpoint{0.041667in}{-0.011050in}}{\pgfqpoint{0.041667in}{0.000000in}}%
\pgfpathcurveto{\pgfqpoint{0.041667in}{0.011050in}}{\pgfqpoint{0.037276in}{0.021649in}}{\pgfqpoint{0.029463in}{0.029463in}}%
\pgfpathcurveto{\pgfqpoint{0.021649in}{0.037276in}}{\pgfqpoint{0.011050in}{0.041667in}}{\pgfqpoint{0.000000in}{0.041667in}}%
\pgfpathcurveto{\pgfqpoint{-0.011050in}{0.041667in}}{\pgfqpoint{-0.021649in}{0.037276in}}{\pgfqpoint{-0.029463in}{0.029463in}}%
\pgfpathcurveto{\pgfqpoint{-0.037276in}{0.021649in}}{\pgfqpoint{-0.041667in}{0.011050in}}{\pgfqpoint{-0.041667in}{0.000000in}}%
\pgfpathcurveto{\pgfqpoint{-0.041667in}{-0.011050in}}{\pgfqpoint{-0.037276in}{-0.021649in}}{\pgfqpoint{-0.029463in}{-0.029463in}}%
\pgfpathcurveto{\pgfqpoint{-0.021649in}{-0.037276in}}{\pgfqpoint{-0.011050in}{-0.041667in}}{\pgfqpoint{0.000000in}{-0.041667in}}%
\pgfpathlineto{\pgfqpoint{0.000000in}{-0.041667in}}%
\pgfpathclose%
\pgfusepath{stroke,fill}%
}%
\begin{pgfscope}%
\pgfsys@transformshift{0.827485in}{1.256052in}%
\pgfsys@useobject{currentmarker}{}%
\end{pgfscope}%
\begin{pgfscope}%
\pgfsys@transformshift{0.827485in}{0.458124in}%
\pgfsys@useobject{currentmarker}{}%
\end{pgfscope}%
\begin{pgfscope}%
\pgfsys@transformshift{0.827485in}{-0.586365in}%
\pgfsys@useobject{currentmarker}{}%
\end{pgfscope}%
\end{pgfscope}%
\begin{pgfscope}%
\pgfpathrectangle{\pgfqpoint{0.504568in}{0.240679in}}{\pgfqpoint{1.937500in}{1.886500in}}%
\pgfusepath{clip}%
\pgfsetbuttcap%
\pgfsetmiterjoin%
\definecolor{currentfill}{rgb}{0.862745,0.149020,0.498039}%
\pgfsetfillcolor{currentfill}%
\pgfsetlinewidth{1.003750pt}%
\definecolor{currentstroke}{rgb}{0.000000,0.000000,0.000000}%
\pgfsetstrokecolor{currentstroke}%
\pgfsetdash{}{0pt}%
\pgfpathmoveto{\pgfqpoint{1.376443in}{1.447186in}}%
\pgfpathlineto{\pgfqpoint{1.570193in}{1.447186in}}%
\pgfpathlineto{\pgfqpoint{1.570193in}{1.776845in}}%
\pgfpathlineto{\pgfqpoint{1.376443in}{1.776845in}}%
\pgfpathlineto{\pgfqpoint{1.376443in}{1.447186in}}%
\pgfpathlineto{\pgfqpoint{1.376443in}{1.447186in}}%
\pgfpathclose%
\pgfusepath{stroke,fill}%
\end{pgfscope}%
\begin{pgfscope}%
\pgfpathrectangle{\pgfqpoint{0.504568in}{0.240679in}}{\pgfqpoint{1.937500in}{1.886500in}}%
\pgfusepath{clip}%
\pgfsetrectcap%
\pgfsetroundjoin%
\pgfsetlinewidth{1.003750pt}%
\definecolor{currentstroke}{rgb}{0.000000,0.000000,0.000000}%
\pgfsetstrokecolor{currentstroke}%
\pgfsetdash{}{0pt}%
\pgfpathmoveto{\pgfqpoint{1.473318in}{1.447186in}}%
\pgfpathlineto{\pgfqpoint{1.473318in}{1.126118in}}%
\pgfusepath{stroke}%
\end{pgfscope}%
\begin{pgfscope}%
\pgfpathrectangle{\pgfqpoint{0.504568in}{0.240679in}}{\pgfqpoint{1.937500in}{1.886500in}}%
\pgfusepath{clip}%
\pgfsetrectcap%
\pgfsetroundjoin%
\pgfsetlinewidth{1.003750pt}%
\definecolor{currentstroke}{rgb}{0.000000,0.000000,0.000000}%
\pgfsetstrokecolor{currentstroke}%
\pgfsetdash{}{0pt}%
\pgfpathmoveto{\pgfqpoint{1.473318in}{1.776845in}}%
\pgfpathlineto{\pgfqpoint{1.473318in}{1.882757in}}%
\pgfusepath{stroke}%
\end{pgfscope}%
\begin{pgfscope}%
\pgfpathrectangle{\pgfqpoint{0.504568in}{0.240679in}}{\pgfqpoint{1.937500in}{1.886500in}}%
\pgfusepath{clip}%
\pgfsetrectcap%
\pgfsetroundjoin%
\pgfsetlinewidth{1.003750pt}%
\definecolor{currentstroke}{rgb}{0.000000,0.000000,0.000000}%
\pgfsetstrokecolor{currentstroke}%
\pgfsetdash{}{0pt}%
\pgfpathmoveto{\pgfqpoint{1.424881in}{1.126118in}}%
\pgfpathlineto{\pgfqpoint{1.521756in}{1.126118in}}%
\pgfusepath{stroke}%
\end{pgfscope}%
\begin{pgfscope}%
\pgfpathrectangle{\pgfqpoint{0.504568in}{0.240679in}}{\pgfqpoint{1.937500in}{1.886500in}}%
\pgfusepath{clip}%
\pgfsetrectcap%
\pgfsetroundjoin%
\pgfsetlinewidth{1.003750pt}%
\definecolor{currentstroke}{rgb}{0.000000,0.000000,0.000000}%
\pgfsetstrokecolor{currentstroke}%
\pgfsetdash{}{0pt}%
\pgfpathmoveto{\pgfqpoint{1.424881in}{1.882757in}}%
\pgfpathlineto{\pgfqpoint{1.521756in}{1.882757in}}%
\pgfusepath{stroke}%
\end{pgfscope}%
\begin{pgfscope}%
\pgfpathrectangle{\pgfqpoint{0.504568in}{0.240679in}}{\pgfqpoint{1.937500in}{1.886500in}}%
\pgfusepath{clip}%
\pgfsetbuttcap%
\pgfsetroundjoin%
\definecolor{currentfill}{rgb}{0.000000,0.000000,0.000000}%
\pgfsetfillcolor{currentfill}%
\pgfsetfillopacity{0.000000}%
\pgfsetlinewidth{1.003750pt}%
\definecolor{currentstroke}{rgb}{0.000000,0.000000,0.000000}%
\pgfsetstrokecolor{currentstroke}%
\pgfsetdash{}{0pt}%
\pgfsys@defobject{currentmarker}{\pgfqpoint{-0.041667in}{-0.041667in}}{\pgfqpoint{0.041667in}{0.041667in}}{%
\pgfpathmoveto{\pgfqpoint{0.000000in}{-0.041667in}}%
\pgfpathcurveto{\pgfqpoint{0.011050in}{-0.041667in}}{\pgfqpoint{0.021649in}{-0.037276in}}{\pgfqpoint{0.029463in}{-0.029463in}}%
\pgfpathcurveto{\pgfqpoint{0.037276in}{-0.021649in}}{\pgfqpoint{0.041667in}{-0.011050in}}{\pgfqpoint{0.041667in}{0.000000in}}%
\pgfpathcurveto{\pgfqpoint{0.041667in}{0.011050in}}{\pgfqpoint{0.037276in}{0.021649in}}{\pgfqpoint{0.029463in}{0.029463in}}%
\pgfpathcurveto{\pgfqpoint{0.021649in}{0.037276in}}{\pgfqpoint{0.011050in}{0.041667in}}{\pgfqpoint{0.000000in}{0.041667in}}%
\pgfpathcurveto{\pgfqpoint{-0.011050in}{0.041667in}}{\pgfqpoint{-0.021649in}{0.037276in}}{\pgfqpoint{-0.029463in}{0.029463in}}%
\pgfpathcurveto{\pgfqpoint{-0.037276in}{0.021649in}}{\pgfqpoint{-0.041667in}{0.011050in}}{\pgfqpoint{-0.041667in}{0.000000in}}%
\pgfpathcurveto{\pgfqpoint{-0.041667in}{-0.011050in}}{\pgfqpoint{-0.037276in}{-0.021649in}}{\pgfqpoint{-0.029463in}{-0.029463in}}%
\pgfpathcurveto{\pgfqpoint{-0.021649in}{-0.037276in}}{\pgfqpoint{-0.011050in}{-0.041667in}}{\pgfqpoint{0.000000in}{-0.041667in}}%
\pgfpathlineto{\pgfqpoint{0.000000in}{-0.041667in}}%
\pgfpathclose%
\pgfusepath{stroke,fill}%
}%
\begin{pgfscope}%
\pgfsys@transformshift{1.473318in}{-2.679366in}%
\pgfsys@useobject{currentmarker}{}%
\end{pgfscope}%
\begin{pgfscope}%
\pgfsys@transformshift{1.473318in}{0.444891in}%
\pgfsys@useobject{currentmarker}{}%
\end{pgfscope}%
\begin{pgfscope}%
\pgfsys@transformshift{1.473318in}{0.547169in}%
\pgfsys@useobject{currentmarker}{}%
\end{pgfscope}%
\end{pgfscope}%
\begin{pgfscope}%
\pgfpathrectangle{\pgfqpoint{0.504568in}{0.240679in}}{\pgfqpoint{1.937500in}{1.886500in}}%
\pgfusepath{clip}%
\pgfsetbuttcap%
\pgfsetmiterjoin%
\definecolor{currentfill}{rgb}{0.392157,0.560784,1.000000}%
\pgfsetfillcolor{currentfill}%
\pgfsetlinewidth{1.003750pt}%
\definecolor{currentstroke}{rgb}{0.000000,0.000000,0.000000}%
\pgfsetstrokecolor{currentstroke}%
\pgfsetdash{}{0pt}%
\pgfpathmoveto{\pgfqpoint{2.022277in}{1.246373in}}%
\pgfpathlineto{\pgfqpoint{2.216027in}{1.246373in}}%
\pgfpathlineto{\pgfqpoint{2.216027in}{1.691205in}}%
\pgfpathlineto{\pgfqpoint{2.022277in}{1.691205in}}%
\pgfpathlineto{\pgfqpoint{2.022277in}{1.246373in}}%
\pgfpathlineto{\pgfqpoint{2.022277in}{1.246373in}}%
\pgfpathclose%
\pgfusepath{stroke,fill}%
\end{pgfscope}%
\begin{pgfscope}%
\pgfpathrectangle{\pgfqpoint{0.504568in}{0.240679in}}{\pgfqpoint{1.937500in}{1.886500in}}%
\pgfusepath{clip}%
\pgfsetrectcap%
\pgfsetroundjoin%
\pgfsetlinewidth{1.003750pt}%
\definecolor{currentstroke}{rgb}{0.000000,0.000000,0.000000}%
\pgfsetstrokecolor{currentstroke}%
\pgfsetdash{}{0pt}%
\pgfpathmoveto{\pgfqpoint{2.119152in}{1.246373in}}%
\pgfpathlineto{\pgfqpoint{2.119152in}{0.668217in}}%
\pgfusepath{stroke}%
\end{pgfscope}%
\begin{pgfscope}%
\pgfpathrectangle{\pgfqpoint{0.504568in}{0.240679in}}{\pgfqpoint{1.937500in}{1.886500in}}%
\pgfusepath{clip}%
\pgfsetrectcap%
\pgfsetroundjoin%
\pgfsetlinewidth{1.003750pt}%
\definecolor{currentstroke}{rgb}{0.000000,0.000000,0.000000}%
\pgfsetstrokecolor{currentstroke}%
\pgfsetdash{}{0pt}%
\pgfpathmoveto{\pgfqpoint{2.119152in}{1.691205in}}%
\pgfpathlineto{\pgfqpoint{2.119152in}{1.908692in}}%
\pgfusepath{stroke}%
\end{pgfscope}%
\begin{pgfscope}%
\pgfpathrectangle{\pgfqpoint{0.504568in}{0.240679in}}{\pgfqpoint{1.937500in}{1.886500in}}%
\pgfusepath{clip}%
\pgfsetrectcap%
\pgfsetroundjoin%
\pgfsetlinewidth{1.003750pt}%
\definecolor{currentstroke}{rgb}{0.000000,0.000000,0.000000}%
\pgfsetstrokecolor{currentstroke}%
\pgfsetdash{}{0pt}%
\pgfpathmoveto{\pgfqpoint{2.070714in}{0.668217in}}%
\pgfpathlineto{\pgfqpoint{2.167589in}{0.668217in}}%
\pgfusepath{stroke}%
\end{pgfscope}%
\begin{pgfscope}%
\pgfpathrectangle{\pgfqpoint{0.504568in}{0.240679in}}{\pgfqpoint{1.937500in}{1.886500in}}%
\pgfusepath{clip}%
\pgfsetrectcap%
\pgfsetroundjoin%
\pgfsetlinewidth{1.003750pt}%
\definecolor{currentstroke}{rgb}{0.000000,0.000000,0.000000}%
\pgfsetstrokecolor{currentstroke}%
\pgfsetdash{}{0pt}%
\pgfpathmoveto{\pgfqpoint{2.070714in}{1.908692in}}%
\pgfpathlineto{\pgfqpoint{2.167589in}{1.908692in}}%
\pgfusepath{stroke}%
\end{pgfscope}%
\begin{pgfscope}%
\pgfpathrectangle{\pgfqpoint{0.504568in}{0.240679in}}{\pgfqpoint{1.937500in}{1.886500in}}%
\pgfusepath{clip}%
\pgfsetbuttcap%
\pgfsetroundjoin%
\definecolor{currentfill}{rgb}{0.000000,0.000000,0.000000}%
\pgfsetfillcolor{currentfill}%
\pgfsetfillopacity{0.000000}%
\pgfsetlinewidth{1.003750pt}%
\definecolor{currentstroke}{rgb}{0.000000,0.000000,0.000000}%
\pgfsetstrokecolor{currentstroke}%
\pgfsetdash{}{0pt}%
\pgfsys@defobject{currentmarker}{\pgfqpoint{-0.041667in}{-0.041667in}}{\pgfqpoint{0.041667in}{0.041667in}}{%
\pgfpathmoveto{\pgfqpoint{0.000000in}{-0.041667in}}%
\pgfpathcurveto{\pgfqpoint{0.011050in}{-0.041667in}}{\pgfqpoint{0.021649in}{-0.037276in}}{\pgfqpoint{0.029463in}{-0.029463in}}%
\pgfpathcurveto{\pgfqpoint{0.037276in}{-0.021649in}}{\pgfqpoint{0.041667in}{-0.011050in}}{\pgfqpoint{0.041667in}{0.000000in}}%
\pgfpathcurveto{\pgfqpoint{0.041667in}{0.011050in}}{\pgfqpoint{0.037276in}{0.021649in}}{\pgfqpoint{0.029463in}{0.029463in}}%
\pgfpathcurveto{\pgfqpoint{0.021649in}{0.037276in}}{\pgfqpoint{0.011050in}{0.041667in}}{\pgfqpoint{0.000000in}{0.041667in}}%
\pgfpathcurveto{\pgfqpoint{-0.011050in}{0.041667in}}{\pgfqpoint{-0.021649in}{0.037276in}}{\pgfqpoint{-0.029463in}{0.029463in}}%
\pgfpathcurveto{\pgfqpoint{-0.037276in}{0.021649in}}{\pgfqpoint{-0.041667in}{0.011050in}}{\pgfqpoint{-0.041667in}{0.000000in}}%
\pgfpathcurveto{\pgfqpoint{-0.041667in}{-0.011050in}}{\pgfqpoint{-0.037276in}{-0.021649in}}{\pgfqpoint{-0.029463in}{-0.029463in}}%
\pgfpathcurveto{\pgfqpoint{-0.021649in}{-0.037276in}}{\pgfqpoint{-0.011050in}{-0.041667in}}{\pgfqpoint{0.000000in}{-0.041667in}}%
\pgfpathlineto{\pgfqpoint{0.000000in}{-0.041667in}}%
\pgfpathclose%
\pgfusepath{stroke,fill}%
}%
\begin{pgfscope}%
\pgfsys@transformshift{2.119152in}{0.424592in}%
\pgfsys@useobject{currentmarker}{}%
\end{pgfscope}%
\begin{pgfscope}%
\pgfsys@transformshift{2.119152in}{0.325165in}%
\pgfsys@useobject{currentmarker}{}%
\end{pgfscope}%
\begin{pgfscope}%
\pgfsys@transformshift{2.119152in}{0.216986in}%
\pgfsys@useobject{currentmarker}{}%
\end{pgfscope}%
\end{pgfscope}%
\begin{pgfscope}%
\pgfpathrectangle{\pgfqpoint{0.504568in}{0.240679in}}{\pgfqpoint{1.937500in}{1.886500in}}%
\pgfusepath{clip}%
\pgfsetbuttcap%
\pgfsetroundjoin%
\pgfsetlinewidth{1.003750pt}%
\definecolor{currentstroke}{rgb}{0.000000,0.000000,0.000000}%
\pgfsetstrokecolor{currentstroke}%
\pgfsetdash{}{0pt}%
\pgfpathmoveto{\pgfqpoint{0.730610in}{1.836148in}}%
\pgfpathlineto{\pgfqpoint{0.924360in}{1.836148in}}%
\pgfusepath{stroke}%
\end{pgfscope}%
\begin{pgfscope}%
\pgfpathrectangle{\pgfqpoint{0.504568in}{0.240679in}}{\pgfqpoint{1.937500in}{1.886500in}}%
\pgfusepath{clip}%
\pgfsetbuttcap%
\pgfsetroundjoin%
\pgfsetlinewidth{1.003750pt}%
\definecolor{currentstroke}{rgb}{0.000000,0.000000,0.000000}%
\pgfsetstrokecolor{currentstroke}%
\pgfsetdash{}{0pt}%
\pgfpathmoveto{\pgfqpoint{1.376443in}{1.707054in}}%
\pgfpathlineto{\pgfqpoint{1.570193in}{1.707054in}}%
\pgfusepath{stroke}%
\end{pgfscope}%
\begin{pgfscope}%
\pgfpathrectangle{\pgfqpoint{0.504568in}{0.240679in}}{\pgfqpoint{1.937500in}{1.886500in}}%
\pgfusepath{clip}%
\pgfsetbuttcap%
\pgfsetroundjoin%
\pgfsetlinewidth{1.003750pt}%
\definecolor{currentstroke}{rgb}{0.000000,0.000000,0.000000}%
\pgfsetstrokecolor{currentstroke}%
\pgfsetdash{}{0pt}%
\pgfpathmoveto{\pgfqpoint{2.022277in}{1.572269in}}%
\pgfpathlineto{\pgfqpoint{2.216027in}{1.572269in}}%
\pgfusepath{stroke}%
\end{pgfscope}%
\begin{pgfscope}%
\pgfsetrectcap%
\pgfsetmiterjoin%
\pgfsetlinewidth{0.803000pt}%
\definecolor{currentstroke}{rgb}{0.000000,0.000000,0.000000}%
\pgfsetstrokecolor{currentstroke}%
\pgfsetdash{}{0pt}%
\pgfpathmoveto{\pgfqpoint{0.504568in}{0.240679in}}%
\pgfpathlineto{\pgfqpoint{0.504568in}{2.127179in}}%
\pgfusepath{stroke}%
\end{pgfscope}%
\begin{pgfscope}%
\pgfsetrectcap%
\pgfsetmiterjoin%
\pgfsetlinewidth{0.803000pt}%
\definecolor{currentstroke}{rgb}{0.000000,0.000000,0.000000}%
\pgfsetstrokecolor{currentstroke}%
\pgfsetdash{}{0pt}%
\pgfpathmoveto{\pgfqpoint{2.442068in}{0.240679in}}%
\pgfpathlineto{\pgfqpoint{2.442068in}{2.127179in}}%
\pgfusepath{stroke}%
\end{pgfscope}%
\begin{pgfscope}%
\pgfsetrectcap%
\pgfsetmiterjoin%
\pgfsetlinewidth{0.803000pt}%
\definecolor{currentstroke}{rgb}{0.000000,0.000000,0.000000}%
\pgfsetstrokecolor{currentstroke}%
\pgfsetdash{}{0pt}%
\pgfpathmoveto{\pgfqpoint{0.504568in}{0.240679in}}%
\pgfpathlineto{\pgfqpoint{2.442068in}{0.240679in}}%
\pgfusepath{stroke}%
\end{pgfscope}%
\begin{pgfscope}%
\pgfsetrectcap%
\pgfsetmiterjoin%
\pgfsetlinewidth{0.803000pt}%
\definecolor{currentstroke}{rgb}{0.000000,0.000000,0.000000}%
\pgfsetstrokecolor{currentstroke}%
\pgfsetdash{}{0pt}%
\pgfpathmoveto{\pgfqpoint{0.504568in}{2.127179in}}%
\pgfpathlineto{\pgfqpoint{2.442068in}{2.127179in}}%
\pgfusepath{stroke}%
\end{pgfscope}%
\end{pgfpicture}%
\makeatother%
\endgroup%

%% file: plots/comparison_large_kernels_fits.pgf
\begingroup%
\makeatletter%
\begin{pgfpicture}%
\pgfpathrectangle{\pgfpointorigin}{\pgfqpoint{2.462068in}{2.195404in}}%
\pgfusepath{use as bounding box, clip}%
\begin{pgfscope}%
\pgfsetbuttcap%
\pgfsetmiterjoin%
\definecolor{currentfill}{rgb}{1.000000,1.000000,1.000000}%
\pgfsetfillcolor{currentfill}%
\pgfsetlinewidth{0.000000pt}%
\definecolor{currentstroke}{rgb}{1.000000,1.000000,1.000000}%
\pgfsetstrokecolor{currentstroke}%
\pgfsetdash{}{0pt}%
\pgfpathmoveto{\pgfqpoint{0.000000in}{0.000000in}}%
\pgfpathlineto{\pgfqpoint{2.462068in}{0.000000in}}%
\pgfpathlineto{\pgfqpoint{2.462068in}{2.195404in}}%
\pgfpathlineto{\pgfqpoint{0.000000in}{2.195404in}}%
\pgfpathlineto{\pgfqpoint{0.000000in}{0.000000in}}%
\pgfpathclose%
\pgfusepath{fill}%
\end{pgfscope}%
\begin{pgfscope}%
\pgfsetbuttcap%
\pgfsetmiterjoin%
\definecolor{currentfill}{rgb}{1.000000,1.000000,1.000000}%
\pgfsetfillcolor{currentfill}%
\pgfsetlinewidth{0.000000pt}%
\definecolor{currentstroke}{rgb}{0.000000,0.000000,0.000000}%
\pgfsetstrokecolor{currentstroke}%
\pgfsetstrokeopacity{0.000000}%
\pgfsetdash{}{0pt}%
\pgfpathmoveto{\pgfqpoint{0.504568in}{0.240679in}}%
\pgfpathlineto{\pgfqpoint{2.442068in}{0.240679in}}%
\pgfpathlineto{\pgfqpoint{2.442068in}{2.127179in}}%
\pgfpathlineto{\pgfqpoint{0.504568in}{2.127179in}}%
\pgfpathlineto{\pgfqpoint{0.504568in}{0.240679in}}%
\pgfpathclose%
\pgfusepath{fill}%
\end{pgfscope}%
\begin{pgfscope}%
\pgfpathrectangle{\pgfqpoint{0.504568in}{0.240679in}}{\pgfqpoint{1.937500in}{1.886500in}}%
\pgfusepath{clip}%
\pgfsetrectcap%
\pgfsetroundjoin%
\pgfsetlinewidth{0.250937pt}%
\definecolor{currentstroke}{rgb}{0.000000,0.000000,0.000000}%
\pgfsetstrokecolor{currentstroke}%
\pgfsetstrokeopacity{0.200000}%
\pgfsetdash{}{0pt}%
\pgfpathmoveto{\pgfqpoint{0.827485in}{0.240679in}}%
\pgfpathlineto{\pgfqpoint{0.827485in}{2.127179in}}%
\pgfusepath{stroke}%
\end{pgfscope}%
\begin{pgfscope}%
\pgfsetbuttcap%
\pgfsetroundjoin%
\definecolor{currentfill}{rgb}{0.000000,0.000000,0.000000}%
\pgfsetfillcolor{currentfill}%
\pgfsetlinewidth{0.803000pt}%
\definecolor{currentstroke}{rgb}{0.000000,0.000000,0.000000}%
\pgfsetstrokecolor{currentstroke}%
\pgfsetdash{}{0pt}%
\pgfsys@defobject{currentmarker}{\pgfqpoint{0.000000in}{-0.048611in}}{\pgfqpoint{0.000000in}{0.000000in}}{%
\pgfpathmoveto{\pgfqpoint{0.000000in}{0.000000in}}%
\pgfpathlineto{\pgfqpoint{0.000000in}{-0.048611in}}%
\pgfusepath{stroke,fill}%
}%
\begin{pgfscope}%
\pgfsys@transformshift{0.827485in}{0.240679in}%
\pgfsys@useobject{currentmarker}{}%
\end{pgfscope}%
\end{pgfscope}%
\begin{pgfscope}%
\definecolor{textcolor}{rgb}{0.000000,0.000000,0.000000}%
\pgfsetstrokecolor{textcolor}%
\pgfsetfillcolor{textcolor}%
\pgftext[x=0.827485in,y=0.143457in,,top]{\color{textcolor}{\rmfamily\fontsize{10.000000}{12.000000}\selectfont\catcode`\^=\active\def^{\ifmmode\sp\else\^{}\fi}\catcode`\%=\active\def
\end{pgfscope}%
\begin{pgfscope}%
\pgfpathrectangle{\pgfqpoint{0.504568in}{0.240679in}}{\pgfqpoint{1.937500in}{1.886500in}}%
\pgfusepath{clip}%
\pgfsetrectcap%
\pgfsetroundjoin%
\pgfsetlinewidth{0.250937pt}%
\definecolor{currentstroke}{rgb}{0.000000,0.000000,0.000000}%
\pgfsetstrokecolor{currentstroke}%
\pgfsetstrokeopacity{0.200000}%
\pgfsetdash{}{0pt}%
\pgfpathmoveto{\pgfqpoint{1.473318in}{0.240679in}}%
\pgfpathlineto{\pgfqpoint{1.473318in}{2.127179in}}%
\pgfusepath{stroke}%
\end{pgfscope}%
\begin{pgfscope}%
\pgfsetbuttcap%
\pgfsetroundjoin%
\definecolor{currentfill}{rgb}{0.000000,0.000000,0.000000}%
\pgfsetfillcolor{currentfill}%
\pgfsetlinewidth{0.803000pt}%
\definecolor{currentstroke}{rgb}{0.000000,0.000000,0.000000}%
\pgfsetstrokecolor{currentstroke}%
\pgfsetdash{}{0pt}%
\pgfsys@defobject{currentmarker}{\pgfqpoint{0.000000in}{-0.048611in}}{\pgfqpoint{0.000000in}{0.000000in}}{%
\pgfpathmoveto{\pgfqpoint{0.000000in}{0.000000in}}%
\pgfpathlineto{\pgfqpoint{0.000000in}{-0.048611in}}%
\pgfusepath{stroke,fill}%
}%
\begin{pgfscope}%
\pgfsys@transformshift{1.473318in}{0.240679in}%
\pgfsys@useobject{currentmarker}{}%
\end{pgfscope}%
\end{pgfscope}%
\begin{pgfscope}%
\definecolor{textcolor}{rgb}{0.000000,0.000000,0.000000}%
\pgfsetstrokecolor{textcolor}%
\pgfsetfillcolor{textcolor}%
\pgftext[x=1.473318in,y=0.143457in,,top]{\color{textcolor}{\rmfamily\fontsize{10.000000}{12.000000}\selectfont\catcode`\^=\active\def^{\ifmmode\sp\else\^{}\fi}\catcode`\%=\active\def
\end{pgfscope}%
\begin{pgfscope}%
\pgfpathrectangle{\pgfqpoint{0.504568in}{0.240679in}}{\pgfqpoint{1.937500in}{1.886500in}}%
\pgfusepath{clip}%
\pgfsetrectcap%
\pgfsetroundjoin%
\pgfsetlinewidth{0.250937pt}%
\definecolor{currentstroke}{rgb}{0.000000,0.000000,0.000000}%
\pgfsetstrokecolor{currentstroke}%
\pgfsetstrokeopacity{0.200000}%
\pgfsetdash{}{0pt}%
\pgfpathmoveto{\pgfqpoint{2.119152in}{0.240679in}}%
\pgfpathlineto{\pgfqpoint{2.119152in}{2.127179in}}%
\pgfusepath{stroke}%
\end{pgfscope}%
\begin{pgfscope}%
\pgfsetbuttcap%
\pgfsetroundjoin%
\definecolor{currentfill}{rgb}{0.000000,0.000000,0.000000}%
\pgfsetfillcolor{currentfill}%
\pgfsetlinewidth{0.803000pt}%
\definecolor{currentstroke}{rgb}{0.000000,0.000000,0.000000}%
\pgfsetstrokecolor{currentstroke}%
\pgfsetdash{}{0pt}%
\pgfsys@defobject{currentmarker}{\pgfqpoint{0.000000in}{-0.048611in}}{\pgfqpoint{0.000000in}{0.000000in}}{%
\pgfpathmoveto{\pgfqpoint{0.000000in}{0.000000in}}%
\pgfpathlineto{\pgfqpoint{0.000000in}{-0.048611in}}%
\pgfusepath{stroke,fill}%
}%
\begin{pgfscope}%
\pgfsys@transformshift{2.119152in}{0.240679in}%
\pgfsys@useobject{currentmarker}{}%
\end{pgfscope}%
\end{pgfscope}%
\begin{pgfscope}%
\definecolor{textcolor}{rgb}{0.000000,0.000000,0.000000}%
\pgfsetstrokecolor{textcolor}%
\pgfsetfillcolor{textcolor}%
\pgftext[x=2.119152in,y=0.143457in,,top]{\color{textcolor}{\rmfamily\fontsize{10.000000}{12.000000}\selectfont\catcode`\^=\active\def^{\ifmmode\sp\else\^{}\fi}\catcode`\%=\active\def
\end{pgfscope}%
\begin{pgfscope}%
\pgfpathrectangle{\pgfqpoint{0.504568in}{0.240679in}}{\pgfqpoint{1.937500in}{1.886500in}}%
\pgfusepath{clip}%
\pgfsetrectcap%
\pgfsetroundjoin%
\pgfsetlinewidth{0.250937pt}%
\definecolor{currentstroke}{rgb}{0.000000,0.000000,0.000000}%
\pgfsetstrokecolor{currentstroke}%
\pgfsetstrokeopacity{0.200000}%
\pgfsetdash{}{0pt}%
\pgfpathmoveto{\pgfqpoint{0.504568in}{0.240679in}}%
\pgfpathlineto{\pgfqpoint{2.442068in}{0.240679in}}%
\pgfusepath{stroke}%
\end{pgfscope}%
\begin{pgfscope}%
\pgfsetbuttcap%
\pgfsetroundjoin%
\definecolor{currentfill}{rgb}{0.000000,0.000000,0.000000}%
\pgfsetfillcolor{currentfill}%
\pgfsetlinewidth{0.803000pt}%
\definecolor{currentstroke}{rgb}{0.000000,0.000000,0.000000}%
\pgfsetstrokecolor{currentstroke}%
\pgfsetdash{}{0pt}%
\pgfsys@defobject{currentmarker}{\pgfqpoint{-0.048611in}{0.000000in}}{\pgfqpoint{-0.000000in}{0.000000in}}{%
\pgfpathmoveto{\pgfqpoint{-0.000000in}{0.000000in}}%
\pgfpathlineto{\pgfqpoint{-0.048611in}{0.000000in}}%
\pgfusepath{stroke,fill}%
}%
\begin{pgfscope}%
\pgfsys@transformshift{0.504568in}{0.240679in}%
\pgfsys@useobject{currentmarker}{}%
\end{pgfscope}%
\end{pgfscope}%
\begin{pgfscope}%
\definecolor{textcolor}{rgb}{0.000000,0.000000,0.000000}%
\pgfsetstrokecolor{textcolor}%
\pgfsetfillcolor{textcolor}%
\pgftext[x=0.268457in, y=0.192454in, left, base]{\color{textcolor}{\rmfamily\fontsize{10.000000}{12.000000}\selectfont\catcode`\^=\active\def^{\ifmmode\sp\else\^{}\fi}\catcode`\%=\active\def
\end{pgfscope}%
\begin{pgfscope}%
\pgfpathrectangle{\pgfqpoint{0.504568in}{0.240679in}}{\pgfqpoint{1.937500in}{1.886500in}}%
\pgfusepath{clip}%
\pgfsetrectcap%
\pgfsetroundjoin%
\pgfsetlinewidth{0.250937pt}%
\definecolor{currentstroke}{rgb}{0.000000,0.000000,0.000000}%
\pgfsetstrokecolor{currentstroke}%
\pgfsetstrokeopacity{0.200000}%
\pgfsetdash{}{0pt}%
\pgfpathmoveto{\pgfqpoint{0.504568in}{0.555096in}}%
\pgfpathlineto{\pgfqpoint{2.442068in}{0.555096in}}%
\pgfusepath{stroke}%
\end{pgfscope}%
\begin{pgfscope}%
\pgfsetbuttcap%
\pgfsetroundjoin%
\definecolor{currentfill}{rgb}{0.000000,0.000000,0.000000}%
\pgfsetfillcolor{currentfill}%
\pgfsetlinewidth{0.803000pt}%
\definecolor{currentstroke}{rgb}{0.000000,0.000000,0.000000}%
\pgfsetstrokecolor{currentstroke}%
\pgfsetdash{}{0pt}%
\pgfsys@defobject{currentmarker}{\pgfqpoint{-0.048611in}{0.000000in}}{\pgfqpoint{-0.000000in}{0.000000in}}{%
\pgfpathmoveto{\pgfqpoint{-0.000000in}{0.000000in}}%
\pgfpathlineto{\pgfqpoint{-0.048611in}{0.000000in}}%
\pgfusepath{stroke,fill}%
}%
\begin{pgfscope}%
\pgfsys@transformshift{0.504568in}{0.555096in}%
\pgfsys@useobject{currentmarker}{}%
\end{pgfscope}%
\end{pgfscope}%
\begin{pgfscope}%
\definecolor{textcolor}{rgb}{0.000000,0.000000,0.000000}%
\pgfsetstrokecolor{textcolor}%
\pgfsetfillcolor{textcolor}%
\pgftext[x=0.268457in, y=0.506870in, left, base]{\color{textcolor}{\rmfamily\fontsize{10.000000}{12.000000}\selectfont\catcode`\^=\active\def^{\ifmmode\sp\else\^{}\fi}\catcode`\%=\active\def
\end{pgfscope}%
\begin{pgfscope}%
\pgfpathrectangle{\pgfqpoint{0.504568in}{0.240679in}}{\pgfqpoint{1.937500in}{1.886500in}}%
\pgfusepath{clip}%
\pgfsetrectcap%
\pgfsetroundjoin%
\pgfsetlinewidth{0.250937pt}%
\definecolor{currentstroke}{rgb}{0.000000,0.000000,0.000000}%
\pgfsetstrokecolor{currentstroke}%
\pgfsetstrokeopacity{0.200000}%
\pgfsetdash{}{0pt}%
\pgfpathmoveto{\pgfqpoint{0.504568in}{0.869512in}}%
\pgfpathlineto{\pgfqpoint{2.442068in}{0.869512in}}%
\pgfusepath{stroke}%
\end{pgfscope}%
\begin{pgfscope}%
\pgfsetbuttcap%
\pgfsetroundjoin%
\definecolor{currentfill}{rgb}{0.000000,0.000000,0.000000}%
\pgfsetfillcolor{currentfill}%
\pgfsetlinewidth{0.803000pt}%
\definecolor{currentstroke}{rgb}{0.000000,0.000000,0.000000}%
\pgfsetstrokecolor{currentstroke}%
\pgfsetdash{}{0pt}%
\pgfsys@defobject{currentmarker}{\pgfqpoint{-0.048611in}{0.000000in}}{\pgfqpoint{-0.000000in}{0.000000in}}{%
\pgfpathmoveto{\pgfqpoint{-0.000000in}{0.000000in}}%
\pgfpathlineto{\pgfqpoint{-0.048611in}{0.000000in}}%
\pgfusepath{stroke,fill}%
}%
\begin{pgfscope}%
\pgfsys@transformshift{0.504568in}{0.869512in}%
\pgfsys@useobject{currentmarker}{}%
\end{pgfscope}%
\end{pgfscope}%
\begin{pgfscope}%
\definecolor{textcolor}{rgb}{0.000000,0.000000,0.000000}%
\pgfsetstrokecolor{textcolor}%
\pgfsetfillcolor{textcolor}%
\pgftext[x=0.268457in, y=0.821287in, left, base]{\color{textcolor}{\rmfamily\fontsize{10.000000}{12.000000}\selectfont\catcode`\^=\active\def^{\ifmmode\sp\else\^{}\fi}\catcode`\%=\active\def
\end{pgfscope}%
\begin{pgfscope}%
\pgfpathrectangle{\pgfqpoint{0.504568in}{0.240679in}}{\pgfqpoint{1.937500in}{1.886500in}}%
\pgfusepath{clip}%
\pgfsetrectcap%
\pgfsetroundjoin%
\pgfsetlinewidth{0.250937pt}%
\definecolor{currentstroke}{rgb}{0.000000,0.000000,0.000000}%
\pgfsetstrokecolor{currentstroke}%
\pgfsetstrokeopacity{0.200000}%
\pgfsetdash{}{0pt}%
\pgfpathmoveto{\pgfqpoint{0.504568in}{1.183929in}}%
\pgfpathlineto{\pgfqpoint{2.442068in}{1.183929in}}%
\pgfusepath{stroke}%
\end{pgfscope}%
\begin{pgfscope}%
\pgfsetbuttcap%
\pgfsetroundjoin%
\definecolor{currentfill}{rgb}{0.000000,0.000000,0.000000}%
\pgfsetfillcolor{currentfill}%
\pgfsetlinewidth{0.803000pt}%
\definecolor{currentstroke}{rgb}{0.000000,0.000000,0.000000}%
\pgfsetstrokecolor{currentstroke}%
\pgfsetdash{}{0pt}%
\pgfsys@defobject{currentmarker}{\pgfqpoint{-0.048611in}{0.000000in}}{\pgfqpoint{-0.000000in}{0.000000in}}{%
\pgfpathmoveto{\pgfqpoint{-0.000000in}{0.000000in}}%
\pgfpathlineto{\pgfqpoint{-0.048611in}{0.000000in}}%
\pgfusepath{stroke,fill}%
}%
\begin{pgfscope}%
\pgfsys@transformshift{0.504568in}{1.183929in}%
\pgfsys@useobject{currentmarker}{}%
\end{pgfscope}%
\end{pgfscope}%
\begin{pgfscope}%
\definecolor{textcolor}{rgb}{0.000000,0.000000,0.000000}%
\pgfsetstrokecolor{textcolor}%
\pgfsetfillcolor{textcolor}%
\pgftext[x=0.268457in, y=1.135704in, left, base]{\color{textcolor}{\rmfamily\fontsize{10.000000}{12.000000}\selectfont\catcode`\^=\active\def^{\ifmmode\sp\else\^{}\fi}\catcode`\%=\active\def
\end{pgfscope}%
\begin{pgfscope}%
\pgfpathrectangle{\pgfqpoint{0.504568in}{0.240679in}}{\pgfqpoint{1.937500in}{1.886500in}}%
\pgfusepath{clip}%
\pgfsetrectcap%
\pgfsetroundjoin%
\pgfsetlinewidth{0.250937pt}%
\definecolor{currentstroke}{rgb}{0.000000,0.000000,0.000000}%
\pgfsetstrokecolor{currentstroke}%
\pgfsetstrokeopacity{0.200000}%
\pgfsetdash{}{0pt}%
\pgfpathmoveto{\pgfqpoint{0.504568in}{1.498346in}}%
\pgfpathlineto{\pgfqpoint{2.442068in}{1.498346in}}%
\pgfusepath{stroke}%
\end{pgfscope}%
\begin{pgfscope}%
\pgfsetbuttcap%
\pgfsetroundjoin%
\definecolor{currentfill}{rgb}{0.000000,0.000000,0.000000}%
\pgfsetfillcolor{currentfill}%
\pgfsetlinewidth{0.803000pt}%
\definecolor{currentstroke}{rgb}{0.000000,0.000000,0.000000}%
\pgfsetstrokecolor{currentstroke}%
\pgfsetdash{}{0pt}%
\pgfsys@defobject{currentmarker}{\pgfqpoint{-0.048611in}{0.000000in}}{\pgfqpoint{-0.000000in}{0.000000in}}{%
\pgfpathmoveto{\pgfqpoint{-0.000000in}{0.000000in}}%
\pgfpathlineto{\pgfqpoint{-0.048611in}{0.000000in}}%
\pgfusepath{stroke,fill}%
}%
\begin{pgfscope}%
\pgfsys@transformshift{0.504568in}{1.498346in}%
\pgfsys@useobject{currentmarker}{}%
\end{pgfscope}%
\end{pgfscope}%
\begin{pgfscope}%
\definecolor{textcolor}{rgb}{0.000000,0.000000,0.000000}%
\pgfsetstrokecolor{textcolor}%
\pgfsetfillcolor{textcolor}%
\pgftext[x=0.268457in, y=1.450120in, left, base]{\color{textcolor}{\rmfamily\fontsize{10.000000}{12.000000}\selectfont\catcode`\^=\active\def^{\ifmmode\sp\else\^{}\fi}\catcode`\%=\active\def
\end{pgfscope}%
\begin{pgfscope}%
\pgfpathrectangle{\pgfqpoint{0.504568in}{0.240679in}}{\pgfqpoint{1.937500in}{1.886500in}}%
\pgfusepath{clip}%
\pgfsetrectcap%
\pgfsetroundjoin%
\pgfsetlinewidth{0.250937pt}%
\definecolor{currentstroke}{rgb}{0.000000,0.000000,0.000000}%
\pgfsetstrokecolor{currentstroke}%
\pgfsetstrokeopacity{0.200000}%
\pgfsetdash{}{0pt}%
\pgfpathmoveto{\pgfqpoint{0.504568in}{1.812762in}}%
\pgfpathlineto{\pgfqpoint{2.442068in}{1.812762in}}%
\pgfusepath{stroke}%
\end{pgfscope}%
\begin{pgfscope}%
\pgfsetbuttcap%
\pgfsetroundjoin%
\definecolor{currentfill}{rgb}{0.000000,0.000000,0.000000}%
\pgfsetfillcolor{currentfill}%
\pgfsetlinewidth{0.803000pt}%
\definecolor{currentstroke}{rgb}{0.000000,0.000000,0.000000}%
\pgfsetstrokecolor{currentstroke}%
\pgfsetdash{}{0pt}%
\pgfsys@defobject{currentmarker}{\pgfqpoint{-0.048611in}{0.000000in}}{\pgfqpoint{-0.000000in}{0.000000in}}{%
\pgfpathmoveto{\pgfqpoint{-0.000000in}{0.000000in}}%
\pgfpathlineto{\pgfqpoint{-0.048611in}{0.000000in}}%
\pgfusepath{stroke,fill}%
}%
\begin{pgfscope}%
\pgfsys@transformshift{0.504568in}{1.812762in}%
\pgfsys@useobject{currentmarker}{}%
\end{pgfscope}%
\end{pgfscope}%
\begin{pgfscope}%
\definecolor{textcolor}{rgb}{0.000000,0.000000,0.000000}%
\pgfsetstrokecolor{textcolor}%
\pgfsetfillcolor{textcolor}%
\pgftext[x=0.268457in, y=1.764537in, left, base]{\color{textcolor}{\rmfamily\fontsize{10.000000}{12.000000}\selectfont\catcode`\^=\active\def^{\ifmmode\sp\else\^{}\fi}\catcode`\%=\active\def
\end{pgfscope}%
\begin{pgfscope}%
\pgfpathrectangle{\pgfqpoint{0.504568in}{0.240679in}}{\pgfqpoint{1.937500in}{1.886500in}}%
\pgfusepath{clip}%
\pgfsetrectcap%
\pgfsetroundjoin%
\pgfsetlinewidth{0.250937pt}%
\definecolor{currentstroke}{rgb}{0.000000,0.000000,0.000000}%
\pgfsetstrokecolor{currentstroke}%
\pgfsetstrokeopacity{0.200000}%
\pgfsetdash{}{0pt}%
\pgfpathmoveto{\pgfqpoint{0.504568in}{2.127179in}}%
\pgfpathlineto{\pgfqpoint{2.442068in}{2.127179in}}%
\pgfusepath{stroke}%
\end{pgfscope}%
\begin{pgfscope}%
\pgfsetbuttcap%
\pgfsetroundjoin%
\definecolor{currentfill}{rgb}{0.000000,0.000000,0.000000}%
\pgfsetfillcolor{currentfill}%
\pgfsetlinewidth{0.803000pt}%
\definecolor{currentstroke}{rgb}{0.000000,0.000000,0.000000}%
\pgfsetstrokecolor{currentstroke}%
\pgfsetdash{}{0pt}%
\pgfsys@defobject{currentmarker}{\pgfqpoint{-0.048611in}{0.000000in}}{\pgfqpoint{-0.000000in}{0.000000in}}{%
\pgfpathmoveto{\pgfqpoint{-0.000000in}{0.000000in}}%
\pgfpathlineto{\pgfqpoint{-0.048611in}{0.000000in}}%
\pgfusepath{stroke,fill}%
}%
\begin{pgfscope}%
\pgfsys@transformshift{0.504568in}{2.127179in}%
\pgfsys@useobject{currentmarker}{}%
\end{pgfscope}%
\end{pgfscope}%
\begin{pgfscope}%
\definecolor{textcolor}{rgb}{0.000000,0.000000,0.000000}%
\pgfsetstrokecolor{textcolor}%
\pgfsetfillcolor{textcolor}%
\pgftext[x=0.199012in, y=2.078954in, left, base]{\color{textcolor}{\rmfamily\fontsize{10.000000}{12.000000}\selectfont\catcode`\^=\active\def^{\ifmmode\sp\else\^{}\fi}\catcode`\%=\active\def
\end{pgfscope}%
\begin{pgfscope}%
\definecolor{textcolor}{rgb}{0.000000,0.000000,0.000000}%
\pgfsetstrokecolor{textcolor}%
\pgfsetfillcolor{textcolor}%
\pgftext[x=0.143457in,y=1.183929in,,bottom,rotate=90.000000]{\color{textcolor}{\rmfamily\fontsize{10.000000}{12.000000}\selectfont\catcode`\^=\active\def^{\ifmmode\sp\else\^{}\fi}\catcode`\%=\active\def
\end{pgfscope}%
\begin{pgfscope}%
\pgfpathrectangle{\pgfqpoint{0.504568in}{0.240679in}}{\pgfqpoint{1.937500in}{1.886500in}}%
\pgfusepath{clip}%
\pgfsetbuttcap%
\pgfsetmiterjoin%
\definecolor{currentfill}{rgb}{1.000000,0.690196,0.000000}%
\pgfsetfillcolor{currentfill}%
\pgfsetlinewidth{1.003750pt}%
\definecolor{currentstroke}{rgb}{0.000000,0.000000,0.000000}%
\pgfsetstrokecolor{currentstroke}%
\pgfsetdash{}{0pt}%
\pgfpathmoveto{\pgfqpoint{0.730610in}{1.860554in}}%
\pgfpathlineto{\pgfqpoint{0.924360in}{1.860554in}}%
\pgfpathlineto{\pgfqpoint{0.924360in}{1.953992in}}%
\pgfpathlineto{\pgfqpoint{0.730610in}{1.953992in}}%
\pgfpathlineto{\pgfqpoint{0.730610in}{1.860554in}}%
\pgfpathlineto{\pgfqpoint{0.730610in}{1.860554in}}%
\pgfpathclose%
\pgfusepath{stroke,fill}%
\end{pgfscope}%
\begin{pgfscope}%
\pgfpathrectangle{\pgfqpoint{0.504568in}{0.240679in}}{\pgfqpoint{1.937500in}{1.886500in}}%
\pgfusepath{clip}%
\pgfsetrectcap%
\pgfsetroundjoin%
\pgfsetlinewidth{1.003750pt}%
\definecolor{currentstroke}{rgb}{0.000000,0.000000,0.000000}%
\pgfsetstrokecolor{currentstroke}%
\pgfsetdash{}{0pt}%
\pgfpathmoveto{\pgfqpoint{0.827485in}{1.860554in}}%
\pgfpathlineto{\pgfqpoint{0.827485in}{1.815411in}}%
\pgfusepath{stroke}%
\end{pgfscope}%
\begin{pgfscope}%
\pgfpathrectangle{\pgfqpoint{0.504568in}{0.240679in}}{\pgfqpoint{1.937500in}{1.886500in}}%
\pgfusepath{clip}%
\pgfsetrectcap%
\pgfsetroundjoin%
\pgfsetlinewidth{1.003750pt}%
\definecolor{currentstroke}{rgb}{0.000000,0.000000,0.000000}%
\pgfsetstrokecolor{currentstroke}%
\pgfsetdash{}{0pt}%
\pgfpathmoveto{\pgfqpoint{0.827485in}{1.953992in}}%
\pgfpathlineto{\pgfqpoint{0.827485in}{2.044948in}}%
\pgfusepath{stroke}%
\end{pgfscope}%
\begin{pgfscope}%
\pgfpathrectangle{\pgfqpoint{0.504568in}{0.240679in}}{\pgfqpoint{1.937500in}{1.886500in}}%
\pgfusepath{clip}%
\pgfsetrectcap%
\pgfsetroundjoin%
\pgfsetlinewidth{1.003750pt}%
\definecolor{currentstroke}{rgb}{0.000000,0.000000,0.000000}%
\pgfsetstrokecolor{currentstroke}%
\pgfsetdash{}{0pt}%
\pgfpathmoveto{\pgfqpoint{0.779048in}{1.815411in}}%
\pgfpathlineto{\pgfqpoint{0.875923in}{1.815411in}}%
\pgfusepath{stroke}%
\end{pgfscope}%
\begin{pgfscope}%
\pgfpathrectangle{\pgfqpoint{0.504568in}{0.240679in}}{\pgfqpoint{1.937500in}{1.886500in}}%
\pgfusepath{clip}%
\pgfsetrectcap%
\pgfsetroundjoin%
\pgfsetlinewidth{1.003750pt}%
\definecolor{currentstroke}{rgb}{0.000000,0.000000,0.000000}%
\pgfsetstrokecolor{currentstroke}%
\pgfsetdash{}{0pt}%
\pgfpathmoveto{\pgfqpoint{0.779048in}{2.044948in}}%
\pgfpathlineto{\pgfqpoint{0.875923in}{2.044948in}}%
\pgfusepath{stroke}%
\end{pgfscope}%
\begin{pgfscope}%
\pgfpathrectangle{\pgfqpoint{0.504568in}{0.240679in}}{\pgfqpoint{1.937500in}{1.886500in}}%
\pgfusepath{clip}%
\pgfsetbuttcap%
\pgfsetroundjoin%
\definecolor{currentfill}{rgb}{0.000000,0.000000,0.000000}%
\pgfsetfillcolor{currentfill}%
\pgfsetfillopacity{0.000000}%
\pgfsetlinewidth{1.003750pt}%
\definecolor{currentstroke}{rgb}{0.000000,0.000000,0.000000}%
\pgfsetstrokecolor{currentstroke}%
\pgfsetdash{}{0pt}%
\pgfsys@defobject{currentmarker}{\pgfqpoint{-0.041667in}{-0.041667in}}{\pgfqpoint{0.041667in}{0.041667in}}{%
\pgfpathmoveto{\pgfqpoint{0.000000in}{-0.041667in}}%
\pgfpathcurveto{\pgfqpoint{0.011050in}{-0.041667in}}{\pgfqpoint{0.021649in}{-0.037276in}}{\pgfqpoint{0.029463in}{-0.029463in}}%
\pgfpathcurveto{\pgfqpoint{0.037276in}{-0.021649in}}{\pgfqpoint{0.041667in}{-0.011050in}}{\pgfqpoint{0.041667in}{0.000000in}}%
\pgfpathcurveto{\pgfqpoint{0.041667in}{0.011050in}}{\pgfqpoint{0.037276in}{0.021649in}}{\pgfqpoint{0.029463in}{0.029463in}}%
\pgfpathcurveto{\pgfqpoint{0.021649in}{0.037276in}}{\pgfqpoint{0.011050in}{0.041667in}}{\pgfqpoint{0.000000in}{0.041667in}}%
\pgfpathcurveto{\pgfqpoint{-0.011050in}{0.041667in}}{\pgfqpoint{-0.021649in}{0.037276in}}{\pgfqpoint{-0.029463in}{0.029463in}}%
\pgfpathcurveto{\pgfqpoint{-0.037276in}{0.021649in}}{\pgfqpoint{-0.041667in}{0.011050in}}{\pgfqpoint{-0.041667in}{0.000000in}}%
\pgfpathcurveto{\pgfqpoint{-0.041667in}{-0.011050in}}{\pgfqpoint{-0.037276in}{-0.021649in}}{\pgfqpoint{-0.029463in}{-0.029463in}}%
\pgfpathcurveto{\pgfqpoint{-0.021649in}{-0.037276in}}{\pgfqpoint{-0.011050in}{-0.041667in}}{\pgfqpoint{0.000000in}{-0.041667in}}%
\pgfpathlineto{\pgfqpoint{0.000000in}{-0.041667in}}%
\pgfpathclose%
\pgfusepath{stroke,fill}%
}%
\begin{pgfscope}%
\pgfsys@transformshift{0.827485in}{0.464759in}%
\pgfsys@useobject{currentmarker}{}%
\end{pgfscope}%
\begin{pgfscope}%
\pgfsys@transformshift{0.827485in}{1.552642in}%
\pgfsys@useobject{currentmarker}{}%
\end{pgfscope}%
\end{pgfscope}%
\begin{pgfscope}%
\pgfpathrectangle{\pgfqpoint{0.504568in}{0.240679in}}{\pgfqpoint{1.937500in}{1.886500in}}%
\pgfusepath{clip}%
\pgfsetbuttcap%
\pgfsetmiterjoin%
\definecolor{currentfill}{rgb}{0.862745,0.149020,0.498039}%
\pgfsetfillcolor{currentfill}%
\pgfsetlinewidth{1.003750pt}%
\definecolor{currentstroke}{rgb}{0.000000,0.000000,0.000000}%
\pgfsetstrokecolor{currentstroke}%
\pgfsetdash{}{0pt}%
\pgfpathmoveto{\pgfqpoint{1.376443in}{1.615603in}}%
\pgfpathlineto{\pgfqpoint{1.570193in}{1.615603in}}%
\pgfpathlineto{\pgfqpoint{1.570193in}{1.850016in}}%
\pgfpathlineto{\pgfqpoint{1.376443in}{1.850016in}}%
\pgfpathlineto{\pgfqpoint{1.376443in}{1.615603in}}%
\pgfpathlineto{\pgfqpoint{1.376443in}{1.615603in}}%
\pgfpathclose%
\pgfusepath{stroke,fill}%
\end{pgfscope}%
\begin{pgfscope}%
\pgfpathrectangle{\pgfqpoint{0.504568in}{0.240679in}}{\pgfqpoint{1.937500in}{1.886500in}}%
\pgfusepath{clip}%
\pgfsetrectcap%
\pgfsetroundjoin%
\pgfsetlinewidth{1.003750pt}%
\definecolor{currentstroke}{rgb}{0.000000,0.000000,0.000000}%
\pgfsetstrokecolor{currentstroke}%
\pgfsetdash{}{0pt}%
\pgfpathmoveto{\pgfqpoint{1.473318in}{1.615603in}}%
\pgfpathlineto{\pgfqpoint{1.473318in}{1.523869in}}%
\pgfusepath{stroke}%
\end{pgfscope}%
\begin{pgfscope}%
\pgfpathrectangle{\pgfqpoint{0.504568in}{0.240679in}}{\pgfqpoint{1.937500in}{1.886500in}}%
\pgfusepath{clip}%
\pgfsetrectcap%
\pgfsetroundjoin%
\pgfsetlinewidth{1.003750pt}%
\definecolor{currentstroke}{rgb}{0.000000,0.000000,0.000000}%
\pgfsetstrokecolor{currentstroke}%
\pgfsetdash{}{0pt}%
\pgfpathmoveto{\pgfqpoint{1.473318in}{1.850016in}}%
\pgfpathlineto{\pgfqpoint{1.473318in}{1.956228in}}%
\pgfusepath{stroke}%
\end{pgfscope}%
\begin{pgfscope}%
\pgfpathrectangle{\pgfqpoint{0.504568in}{0.240679in}}{\pgfqpoint{1.937500in}{1.886500in}}%
\pgfusepath{clip}%
\pgfsetrectcap%
\pgfsetroundjoin%
\pgfsetlinewidth{1.003750pt}%
\definecolor{currentstroke}{rgb}{0.000000,0.000000,0.000000}%
\pgfsetstrokecolor{currentstroke}%
\pgfsetdash{}{0pt}%
\pgfpathmoveto{\pgfqpoint{1.424881in}{1.523869in}}%
\pgfpathlineto{\pgfqpoint{1.521756in}{1.523869in}}%
\pgfusepath{stroke}%
\end{pgfscope}%
\begin{pgfscope}%
\pgfpathrectangle{\pgfqpoint{0.504568in}{0.240679in}}{\pgfqpoint{1.937500in}{1.886500in}}%
\pgfusepath{clip}%
\pgfsetrectcap%
\pgfsetroundjoin%
\pgfsetlinewidth{1.003750pt}%
\definecolor{currentstroke}{rgb}{0.000000,0.000000,0.000000}%
\pgfsetstrokecolor{currentstroke}%
\pgfsetdash{}{0pt}%
\pgfpathmoveto{\pgfqpoint{1.424881in}{1.956228in}}%
\pgfpathlineto{\pgfqpoint{1.521756in}{1.956228in}}%
\pgfusepath{stroke}%
\end{pgfscope}%
\begin{pgfscope}%
\pgfpathrectangle{\pgfqpoint{0.504568in}{0.240679in}}{\pgfqpoint{1.937500in}{1.886500in}}%
\pgfusepath{clip}%
\pgfsetbuttcap%
\pgfsetroundjoin%
\definecolor{currentfill}{rgb}{0.000000,0.000000,0.000000}%
\pgfsetfillcolor{currentfill}%
\pgfsetfillopacity{0.000000}%
\pgfsetlinewidth{1.003750pt}%
\definecolor{currentstroke}{rgb}{0.000000,0.000000,0.000000}%
\pgfsetstrokecolor{currentstroke}%
\pgfsetdash{}{0pt}%
\pgfsys@defobject{currentmarker}{\pgfqpoint{-0.041667in}{-0.041667in}}{\pgfqpoint{0.041667in}{0.041667in}}{%
\pgfpathmoveto{\pgfqpoint{0.000000in}{-0.041667in}}%
\pgfpathcurveto{\pgfqpoint{0.011050in}{-0.041667in}}{\pgfqpoint{0.021649in}{-0.037276in}}{\pgfqpoint{0.029463in}{-0.029463in}}%
\pgfpathcurveto{\pgfqpoint{0.037276in}{-0.021649in}}{\pgfqpoint{0.041667in}{-0.011050in}}{\pgfqpoint{0.041667in}{0.000000in}}%
\pgfpathcurveto{\pgfqpoint{0.041667in}{0.011050in}}{\pgfqpoint{0.037276in}{0.021649in}}{\pgfqpoint{0.029463in}{0.029463in}}%
\pgfpathcurveto{\pgfqpoint{0.021649in}{0.037276in}}{\pgfqpoint{0.011050in}{0.041667in}}{\pgfqpoint{0.000000in}{0.041667in}}%
\pgfpathcurveto{\pgfqpoint{-0.011050in}{0.041667in}}{\pgfqpoint{-0.021649in}{0.037276in}}{\pgfqpoint{-0.029463in}{0.029463in}}%
\pgfpathcurveto{\pgfqpoint{-0.037276in}{0.021649in}}{\pgfqpoint{-0.041667in}{0.011050in}}{\pgfqpoint{-0.041667in}{0.000000in}}%
\pgfpathcurveto{\pgfqpoint{-0.041667in}{-0.011050in}}{\pgfqpoint{-0.037276in}{-0.021649in}}{\pgfqpoint{-0.029463in}{-0.029463in}}%
\pgfpathcurveto{\pgfqpoint{-0.021649in}{-0.037276in}}{\pgfqpoint{-0.011050in}{-0.041667in}}{\pgfqpoint{0.000000in}{-0.041667in}}%
\pgfpathlineto{\pgfqpoint{0.000000in}{-0.041667in}}%
\pgfpathclose%
\pgfusepath{stroke,fill}%
}%
\begin{pgfscope}%
\pgfsys@transformshift{1.473318in}{1.098900in}%
\pgfsys@useobject{currentmarker}{}%
\end{pgfscope}%
\begin{pgfscope}%
\pgfsys@transformshift{1.473318in}{0.457900in}%
\pgfsys@useobject{currentmarker}{}%
\end{pgfscope}%
\begin{pgfscope}%
\pgfsys@transformshift{1.473318in}{-0.681069in}%
\pgfsys@useobject{currentmarker}{}%
\end{pgfscope}%
\end{pgfscope}%
\begin{pgfscope}%
\pgfpathrectangle{\pgfqpoint{0.504568in}{0.240679in}}{\pgfqpoint{1.937500in}{1.886500in}}%
\pgfusepath{clip}%
\pgfsetbuttcap%
\pgfsetmiterjoin%
\definecolor{currentfill}{rgb}{0.392157,0.560784,1.000000}%
\pgfsetfillcolor{currentfill}%
\pgfsetlinewidth{1.003750pt}%
\definecolor{currentstroke}{rgb}{0.000000,0.000000,0.000000}%
\pgfsetstrokecolor{currentstroke}%
\pgfsetdash{}{0pt}%
\pgfpathmoveto{\pgfqpoint{2.022277in}{1.833054in}}%
\pgfpathlineto{\pgfqpoint{2.216027in}{1.833054in}}%
\pgfpathlineto{\pgfqpoint{2.216027in}{1.990973in}}%
\pgfpathlineto{\pgfqpoint{2.022277in}{1.990973in}}%
\pgfpathlineto{\pgfqpoint{2.022277in}{1.833054in}}%
\pgfpathlineto{\pgfqpoint{2.022277in}{1.833054in}}%
\pgfpathclose%
\pgfusepath{stroke,fill}%
\end{pgfscope}%
\begin{pgfscope}%
\pgfpathrectangle{\pgfqpoint{0.504568in}{0.240679in}}{\pgfqpoint{1.937500in}{1.886500in}}%
\pgfusepath{clip}%
\pgfsetrectcap%
\pgfsetroundjoin%
\pgfsetlinewidth{1.003750pt}%
\definecolor{currentstroke}{rgb}{0.000000,0.000000,0.000000}%
\pgfsetstrokecolor{currentstroke}%
\pgfsetdash{}{0pt}%
\pgfpathmoveto{\pgfqpoint{2.119152in}{1.833054in}}%
\pgfpathlineto{\pgfqpoint{2.119152in}{1.761672in}}%
\pgfusepath{stroke}%
\end{pgfscope}%
\begin{pgfscope}%
\pgfpathrectangle{\pgfqpoint{0.504568in}{0.240679in}}{\pgfqpoint{1.937500in}{1.886500in}}%
\pgfusepath{clip}%
\pgfsetrectcap%
\pgfsetroundjoin%
\pgfsetlinewidth{1.003750pt}%
\definecolor{currentstroke}{rgb}{0.000000,0.000000,0.000000}%
\pgfsetstrokecolor{currentstroke}%
\pgfsetdash{}{0pt}%
\pgfpathmoveto{\pgfqpoint{2.119152in}{1.990973in}}%
\pgfpathlineto{\pgfqpoint{2.119152in}{2.019451in}}%
\pgfusepath{stroke}%
\end{pgfscope}%
\begin{pgfscope}%
\pgfpathrectangle{\pgfqpoint{0.504568in}{0.240679in}}{\pgfqpoint{1.937500in}{1.886500in}}%
\pgfusepath{clip}%
\pgfsetrectcap%
\pgfsetroundjoin%
\pgfsetlinewidth{1.003750pt}%
\definecolor{currentstroke}{rgb}{0.000000,0.000000,0.000000}%
\pgfsetstrokecolor{currentstroke}%
\pgfsetdash{}{0pt}%
\pgfpathmoveto{\pgfqpoint{2.070714in}{1.761672in}}%
\pgfpathlineto{\pgfqpoint{2.167589in}{1.761672in}}%
\pgfusepath{stroke}%
\end{pgfscope}%
\begin{pgfscope}%
\pgfpathrectangle{\pgfqpoint{0.504568in}{0.240679in}}{\pgfqpoint{1.937500in}{1.886500in}}%
\pgfusepath{clip}%
\pgfsetrectcap%
\pgfsetroundjoin%
\pgfsetlinewidth{1.003750pt}%
\definecolor{currentstroke}{rgb}{0.000000,0.000000,0.000000}%
\pgfsetstrokecolor{currentstroke}%
\pgfsetdash{}{0pt}%
\pgfpathmoveto{\pgfqpoint{2.070714in}{2.019451in}}%
\pgfpathlineto{\pgfqpoint{2.167589in}{2.019451in}}%
\pgfusepath{stroke}%
\end{pgfscope}%
\begin{pgfscope}%
\pgfpathrectangle{\pgfqpoint{0.504568in}{0.240679in}}{\pgfqpoint{1.937500in}{1.886500in}}%
\pgfusepath{clip}%
\pgfsetbuttcap%
\pgfsetroundjoin%
\definecolor{currentfill}{rgb}{0.000000,0.000000,0.000000}%
\pgfsetfillcolor{currentfill}%
\pgfsetfillopacity{0.000000}%
\pgfsetlinewidth{1.003750pt}%
\definecolor{currentstroke}{rgb}{0.000000,0.000000,0.000000}%
\pgfsetstrokecolor{currentstroke}%
\pgfsetdash{}{0pt}%
\pgfsys@defobject{currentmarker}{\pgfqpoint{-0.041667in}{-0.041667in}}{\pgfqpoint{0.041667in}{0.041667in}}{%
\pgfpathmoveto{\pgfqpoint{0.000000in}{-0.041667in}}%
\pgfpathcurveto{\pgfqpoint{0.011050in}{-0.041667in}}{\pgfqpoint{0.021649in}{-0.037276in}}{\pgfqpoint{0.029463in}{-0.029463in}}%
\pgfpathcurveto{\pgfqpoint{0.037276in}{-0.021649in}}{\pgfqpoint{0.041667in}{-0.011050in}}{\pgfqpoint{0.041667in}{0.000000in}}%
\pgfpathcurveto{\pgfqpoint{0.041667in}{0.011050in}}{\pgfqpoint{0.037276in}{0.021649in}}{\pgfqpoint{0.029463in}{0.029463in}}%
\pgfpathcurveto{\pgfqpoint{0.021649in}{0.037276in}}{\pgfqpoint{0.011050in}{0.041667in}}{\pgfqpoint{0.000000in}{0.041667in}}%
\pgfpathcurveto{\pgfqpoint{-0.011050in}{0.041667in}}{\pgfqpoint{-0.021649in}{0.037276in}}{\pgfqpoint{-0.029463in}{0.029463in}}%
\pgfpathcurveto{\pgfqpoint{-0.037276in}{0.021649in}}{\pgfqpoint{-0.041667in}{0.011050in}}{\pgfqpoint{-0.041667in}{0.000000in}}%
\pgfpathcurveto{\pgfqpoint{-0.041667in}{-0.011050in}}{\pgfqpoint{-0.037276in}{-0.021649in}}{\pgfqpoint{-0.029463in}{-0.029463in}}%
\pgfpathcurveto{\pgfqpoint{-0.021649in}{-0.037276in}}{\pgfqpoint{-0.011050in}{-0.041667in}}{\pgfqpoint{0.000000in}{-0.041667in}}%
\pgfpathlineto{\pgfqpoint{0.000000in}{-0.041667in}}%
\pgfpathclose%
\pgfusepath{stroke,fill}%
}%
\begin{pgfscope}%
\pgfsys@transformshift{2.119152in}{1.489310in}%
\pgfsys@useobject{currentmarker}{}%
\end{pgfscope}%
\begin{pgfscope}%
\pgfsys@transformshift{2.119152in}{0.470379in}%
\pgfsys@useobject{currentmarker}{}%
\end{pgfscope}%
\end{pgfscope}%
\begin{pgfscope}%
\pgfpathrectangle{\pgfqpoint{0.504568in}{0.240679in}}{\pgfqpoint{1.937500in}{1.886500in}}%
\pgfusepath{clip}%
\pgfsetbuttcap%
\pgfsetroundjoin%
\pgfsetlinewidth{1.003750pt}%
\definecolor{currentstroke}{rgb}{0.000000,0.000000,0.000000}%
\pgfsetstrokecolor{currentstroke}%
\pgfsetdash{}{0pt}%
\pgfpathmoveto{\pgfqpoint{0.730610in}{1.915064in}}%
\pgfpathlineto{\pgfqpoint{0.924360in}{1.915064in}}%
\pgfusepath{stroke}%
\end{pgfscope}%
\begin{pgfscope}%
\pgfpathrectangle{\pgfqpoint{0.504568in}{0.240679in}}{\pgfqpoint{1.937500in}{1.886500in}}%
\pgfusepath{clip}%
\pgfsetbuttcap%
\pgfsetroundjoin%
\pgfsetlinewidth{1.003750pt}%
\definecolor{currentstroke}{rgb}{0.000000,0.000000,0.000000}%
\pgfsetstrokecolor{currentstroke}%
\pgfsetdash{}{0pt}%
\pgfpathmoveto{\pgfqpoint{1.376443in}{1.773339in}}%
\pgfpathlineto{\pgfqpoint{1.570193in}{1.773339in}}%
\pgfusepath{stroke}%
\end{pgfscope}%
\begin{pgfscope}%
\pgfpathrectangle{\pgfqpoint{0.504568in}{0.240679in}}{\pgfqpoint{1.937500in}{1.886500in}}%
\pgfusepath{clip}%
\pgfsetbuttcap%
\pgfsetroundjoin%
\pgfsetlinewidth{1.003750pt}%
\definecolor{currentstroke}{rgb}{0.000000,0.000000,0.000000}%
\pgfsetstrokecolor{currentstroke}%
\pgfsetdash{}{0pt}%
\pgfpathmoveto{\pgfqpoint{2.022277in}{1.934544in}}%
\pgfpathlineto{\pgfqpoint{2.216027in}{1.934544in}}%
\pgfusepath{stroke}%
\end{pgfscope}%
\begin{pgfscope}%
\pgfsetrectcap%
\pgfsetmiterjoin%
\pgfsetlinewidth{0.803000pt}%
\definecolor{currentstroke}{rgb}{0.000000,0.000000,0.000000}%
\pgfsetstrokecolor{currentstroke}%
\pgfsetdash{}{0pt}%
\pgfpathmoveto{\pgfqpoint{0.504568in}{0.240679in}}%
\pgfpathlineto{\pgfqpoint{0.504568in}{2.127179in}}%
\pgfusepath{stroke}%
\end{pgfscope}%
\begin{pgfscope}%
\pgfsetrectcap%
\pgfsetmiterjoin%
\pgfsetlinewidth{0.803000pt}%
\definecolor{currentstroke}{rgb}{0.000000,0.000000,0.000000}%
\pgfsetstrokecolor{currentstroke}%
\pgfsetdash{}{0pt}%
\pgfpathmoveto{\pgfqpoint{2.442068in}{0.240679in}}%
\pgfpathlineto{\pgfqpoint{2.442068in}{2.127179in}}%
\pgfusepath{stroke}%
\end{pgfscope}%
\begin{pgfscope}%
\pgfsetrectcap%
\pgfsetmiterjoin%
\pgfsetlinewidth{0.803000pt}%
\definecolor{currentstroke}{rgb}{0.000000,0.000000,0.000000}%
\pgfsetstrokecolor{currentstroke}%
\pgfsetdash{}{0pt}%
\pgfpathmoveto{\pgfqpoint{0.504568in}{0.240679in}}%
\pgfpathlineto{\pgfqpoint{2.442068in}{0.240679in}}%
\pgfusepath{stroke}%
\end{pgfscope}%
\begin{pgfscope}%
\pgfsetrectcap%
\pgfsetmiterjoin%
\pgfsetlinewidth{0.803000pt}%
\definecolor{currentstroke}{rgb}{0.000000,0.000000,0.000000}%
\pgfsetstrokecolor{currentstroke}%
\pgfsetdash{}{0pt}%
\pgfpathmoveto{\pgfqpoint{0.504568in}{2.127179in}}%
\pgfpathlineto{\pgfqpoint{2.442068in}{2.127179in}}%
\pgfusepath{stroke}%
\end{pgfscope}%
\end{pgfpicture}%
\makeatother%
\endgroup%